\DeclareFontFamily{OT1}{eusb}{} \DeclareFontShape{OT1}{eusb}{m}{n} {<5> <6> <7> <8> <9> <10> <11> <12> <14.4> eusb10}{}
\DeclareMathAlphabet{\eusb}{OT1}{eusb}{m}{n}
\DeclareFontFamily{OT1}{eusm}{} \DeclareFontShape{OT1}{eusm}{m}{n} {<5> <6> <7> <8> <9> <10> <11> <12> <14.4> eusm10}{}
\DeclareMathAlphabet{\eusm}{OT1}{eusm}{m}{n}
\DeclareFontFamily{OT1}{eufm}{} \DeclareFontShape{OT1}{eufm}{m}{n} {<5> <6> <7> <8> <9> <10> <11> <12> <14.4> eufm10}{}
\DeclareMathAlphabet{\mathfrak}{OT1}{eufm}{m}{n}
\DeclareFontFamily{OT1}{fraktura}{}
\DeclareFontShape{OT1}{fraktura}{m}{n} {<5> <6> <7> <8> <9> <10> <11> <12> <13> <14.4> [1.1] eufm10}{}
\DeclareMathAlphabet{\fraktura}{OT1}{fraktura}{m}{n}
\DeclareFontFamily{OT1}{cmfi}{} \DeclareFontShape{OT1}{cmfi}{m}{n} {<5> <6> <7> <8> <9> <10> <11> <12> <13> <14.4> [0.9] cmfi10}{}
\DeclareMathAlphabet{\cmfi}{OT1}{cmfi}{b}{n}
\DeclareFontFamily{OT1}{cmss}{} \DeclareFontShape{OT1}{cmss}{m}{n} {<5> <6> <7> <8> <9> <10> <11> <12> <13> <14.4> cmss10}{}
\DeclareMathAlphabet{\cmss}{OT1}{cmss}{m}{n}
\newtheoremstyle{thm}{1.8ex}{1.8ex}{\itshape\rmfamily}{} {\bfseries\rmfamily}{}{2ex}{}
\newtheoremstyle{def}{1.8ex}{1.8ex}{\slshape\rmfamily}{} {\bfseries\rmfamily}{}{2ex}{}
\newtheoremstyle{rem}{1.8ex}{1.8ex}{\rmfamily}{} {\bfseries\rmfamily}{}{2ex}{}
\newenvironment{proofsect}[1] {\vspace{0.2cm}\noindent{\rmfamily\itshape#1.}}{\qed\vspace{0.15cm}}%{\newline\vspace{0.15cm}}
\theoremstyle{thm}
\newtheorem{theorem}{Theorem}[section]
\newtheorem{lemma}[theorem]{Lemma}
\newtheorem{proposition}[theorem]{Proposition}
\newtheorem*{Main Theorem}{Main Theorem.}
\newtheorem{corollary}[theorem]{Corollary}
\newtheorem*{special theorem}{Lindeberg-Feller Theorem for Martingales}
\theoremstyle{def}
\theoremstyle{rem}
\newtheorem{remark}[theorem]{Remark}
\newtheorem{remarks}[theorem]{Remarks}
\numberwithin{equation}{section}
\renewcommand{\small}{\fontsize{9}{9}\selectfont}
\renewcommand{\section}{\secdef\sct\sect}
\newcommand{\sct}[2][default]{%
\refstepcounter{section}
\addcontentsline{toc}{section}{{\tocsection {}{\thesection}{\!\!\!\!#1\dotfill}}{}}
\vspace{0.7cm}
\centerline{\scshape\thesection.\ #1} \nopagebreak \vspace{0.2cm}}
\newcommand{\sect}[1]{%
\vspace{0.4cm} \centerline{\large\scshape\rmfamily #1}
\vspace{0.2cm}}
\renewcommand{\subsection}{\secdef\subsct\sbsect}
\newcommand{\subsct}[2][default]{\refstepcounter{subsection}
\addcontentsline{toc}{subsection}
{{\tocsection{\!\!}{\hspace{1.2em}\thesubsection}{\!\!\!\!#1\dotfill}}{}}
\nopagebreak\vspace{0.45\baselineskip} {\flushleft\bf
\thesubsection~\bf #1.~}
\\*[3mm]\noindent
\nopagebreak}
\newcommand{\sbsect}[1]{\vspace{0.1cm}\noindent
\textbf{#1.~}\vspace{0.1cm}}
\renewcommand{\subsubsection}{%
\secdef \subsubsect\sbsbsect}
\newcommand{\subsubsect}[2][default]{%
\refstepcounter{subsubsection} 
\addcontentsline{toc}{subsubsection}{{\tocsection{\!\!}
{\hspace{3.05em}\thesubsubsection}{\!\!\!\!#1\dotfill}}{}}
\nopagebreak
\vspace{0.15\baselineskip} \nopagebreak {\flushleft\rmfamily
\itshape\thesubsubsection
\ \rmfamily #1\/.}\ }
\newcommand{\sbsbsect}[1]{\vspace{0.1cm}\noindent
\rmfamily \itshape
\arabic{section}.\arabic{subsection}.\arabic{subsubsection} \
\sffamily #1\/.\ }
\renewcommand{\caption}[1]{%
\vglue0.5cm
\refstepcounter{figure}
\begin{minipage}{0.9\textwidth}\small {\sc Figure~\thefigure. }#1\end{minipage}}
\def\myffrac#1#2 in #3{\raise 2.6pt\hbox{$#3 #1$}\mkern-1.5mu\raise 0.8pt\hbox{$#3/$}\mkern-1.1mu\lower 1.5pt\hbox{$#3 #2$}}
\newcommand{\ffrac}[2]{\mathchoice%
{\myffrac{#1}{#2} in \scriptstyle}
{\myffrac{#1}{#2} in \scriptstyle}
{\myffrac{#1}{#2} in \scriptscriptstyle}
{\myffrac{#1}{#2} in \scriptscriptstyle}
}
\newcommand{\dist}{\operatorname{dist}}
\newcommand{\supp}{\operatorname{supp}}
\newcommand{\textd}{\text{\rm d}\mkern0.5mu}
\newcommand{\1}{\operatorname{\sf 1}\!}
\newcommand{\BB}{\mathcal B}
\newcommand{\FF}{\mathcal F}
\newcommand{\HH}{\mathcal H}
\newcommand{\KK}{\mathcal K}
\newcommand{\LL}{\mathcal L}
\newcommand{\NN}{\mathcal N}
\newcommand{\B}{\mathbb B}
\newcommand{\E}{\mathbb E}
\newcommand{\N}{\mathbb N}
\newcommand{\BbbP}{\mathbb P}
\newcommand{\R}{\mathbb R}
\newcommand{\Z}{\mathbb Z}
\newcommand{\scrF}{\mathscr{F}}
\newcommand{\twoeqref}[2]{(\ref{#1}--\ref{#2})}
\newcommand{\cc}{{\text{\rm c}}}
\newcommand{\hate}{\hat{\text{\rm e}}}
\newcommand{\eff}{{\text{\rm eff}}}
\newcommand{\Var}{\text{\rm Var}}
\renewcommand{\LL}{\cmss L}
\renewcommand{\gg}{\mathfrak g}
\title[A CLT for effective conductance]
{\large A central limit theorem for the effective conductance: Linear boundary data and small ellipticity contrasts}
\author[M.~Biskup, M.~Salvi, T.~Wolff]
{M.~Biskup$^{1,2}$,\,\,\,\,M.~Salvi$^{3}$ \,\,and\,\, T.~Wolff$^{3,4}$}
\begin{document} %\linenumbers
\thanks{\small\hglue-4.5mm%Contact: biskup@math.ucla.edu and boukhadra@cmi.univ-mrs.fr\\
\copyright\,\textrm{2014} \textrm{M.~Biskup, M.~Salvi and T.~Wolff}.
Reproduction, by any means, of the entire article for non-commercial purposes is permitted without charge.}

\maketitle
\vspace{-5mm}
%\vspace{-5mm}
\centerline{\textit{$^1$Department of Mathematics, UCLA, Los Angeles, California, U.S.A.}}
\centerline{\textit{$^2$School of Economics, University of South Bohemia, \v Cesk\'e Bud\v ejovice, Czech Republic}}
\centerline{\textit{$^3$Institut f\"ur Mathematik, Technische Universit\"at Berlin, Berlin, Germany}}
\centerline{\textit{$^4$Weierstra\ss-Institut f\"ur Angewandte Analysis und Stochastik, Berlin, Germany}
}

\vspace{-3mm}
\begin{abstract}
Given a resistor network on~$\Z^d$ with nearest-neighbor conductances, the effective conductance in a finite set with a given boundary condition is the the minimum of the Dirichlet energy over functions with the prescribed boundary values. For shift-ergodic conductances, linear (Dirichlet) boundary conditions and square boxes, the effective conductance scaled by the volume of the box converges to a deterministic limit as the box-size tends to infinity. Here we prove that, for i.i.d.\ conductances with a small ellipticity contrast, also a (non-degenerate) central limit theorem holds. The proof is based on the corrector method and the Martingale Central Limit Theorem; a key integrability condition is furnished by the Meyers estimate. More general domains, boundary conditions and  ellipticity contrasts will be addressed in a subsequent paper.
\end{abstract}

%\bigskip
%\noindent{\small%
%{\textbf{Keywords}}: Random conductance model, subdiffusive heat-kernel decay, percolation
%\newline
%\noindent
%{\textbf{AMS Subject Classification}}: 60G50; 60J10; 60K37.
%}

\vglue0.3cm

%\tableofcontents

%%%%%%%%%%%%%%%%%%%%%%%%%%%%%
\section{Introduction and Main Result}
\label{sec1}
%%%%%%%%%%%%%%%%%%%%%%%%%%%%%
\noindent
As is well known, most materials, regardless how pure they may seem at the macroscopic level, have a rather complicated  microscopic structure. It may then come as a surprise that physical phenomena such as heat or electric conduction are described so well using differential equations with smooth, sometimes even constant, coefficients. An explanation has been offered by homogenization theory: rapid oscillations at the microscopic level average out, or homogenize, at the macroscopic scale. However, this does not mean that the microscopic structure is simply washed out. Indeed, while it disappears from the structure of the resulting equations, it remains embedded in the values of  effective material constants, e.g., the coefficients.

An illustrative example of a homogenization problem is that of effective conductance. We will formulate an instance of this problem in the setting of resistor networks. Let~$\Z^d$ denote the $d$-dimensional hypercubic lattice and suppose that each unordered nearest-neighbor edge~$\langle x,y\rangle$ is assigned a value $a_{xy}=a_{yx}\in(0,\infty)$ --- called the \emph{conductance} of $\langle x,y\rangle$. For any $\Lambda\subset\Z^d$, let~$\B(\Lambda)$ be the edges with at least one endpoint in $\Lambda$. Given an $f\colon\Z^d\to\R$ and a finite $\Lambda\subset\Z^d$, let
\begin{equation}
\label{E:1.1}
Q_\Lambda(f):=\sum_{\langle x,y\rangle\in\B(\Lambda)}a_{xy}\bigl[f(y)-f(x)\bigr]^2,
\end{equation}
where each pair $(x,y)$ is counted only once. This is the electrostatic (Dirichlet) energy for the potential~$f$ with Dirichlet boundary condition on the boundary vertices of~$\Lambda$. 

Consider now the square box $\Lambda_L:=[0,L)^d\cap\Z^d$. A quantity of prime interest for us is the \emph{effective conductance},
\begin{equation}
\label{E:1.2}
C^{\,\eff}_{L}(t):=\inf\bigl\{Q_{\Lambda_L}(f)\colon f(x)=t\cdot x,\,\,\forall x\in\partial\Lambda_L\bigr\},
\end{equation}
where $t\in\R^d$ and where $\partial\Lambda$ are those vertices outside $\Lambda$ that have an edge into~$\Lambda$. By Kirchhoff's and Ohm's laws (see, e.g., Doyle and Snell~\cite{Doyle-Snell}), $C^{\,\eff}_{L}(t)$ is the total electric current flowing through the network when the boundary vertices are kept at voltage~$t\cdot x$.

For homogeneous resistor networks, i.e., when $a_{xy}:=a$ for all $\langle x,y\rangle$, the infimum \eqref{E:1.2} is achieved by $f(x):=t\cdot x$ and so $C^{\,\eff}_L(t)=a|t|^2L^d(1+o(1))$. A question of (reasonably) practical interest is then what happens when the conductances~$a_{xy}$ are no longer constant, but remain close to a constant. In particular, we may assume that they are uniformly elliptic, i.e., 
\begin{equation}
\label{E:1.3}
\exists\lambda\in(0,1),\,\,\forall\langle x,y\rangle\in\B(\Z^d)\colon\qquad \lambda\le a_{xy}\le\frac1\lambda.
\end{equation}
A comparison of $Q_\Lambda$ with these $a_{xy}$'s and the homogeneous case shows that $C^{\,\eff}_L(t)$ is still of the order of $|t|^2L^d$. Moreover, thanks to the choice of the linear boundary condition, by subadditivity arguments the limit 
\begin{equation}
%\label{}
c_\eff(t):=\lim_{L\to\infty}\,\, \frac1{L^d}\,C^{\,\eff}_L(t)
\end{equation}
exists almost surely for any ergodic distribution of the conductances. The problem left to resolve is thus a computation of the limit value. 

Although $c_\eff(t)$ can be computed explicitly only in a handful of (mostly periodic) cases, it can be characterized in large generality: Suppose that $a_{xy}=a_{xy}(\omega)$ is a sample from a shift-ergodic law~$\BbbP$ on the product space $\Omega:=\bigotimes_{\B(\Z^d)}[\lambda,\ffrac1\lambda]$ indexed by edges of~$\Z^d$. (Technically, $a_{xy}(\omega)=a_{yx}(\omega)$ is the coordinate projection on edge $\langle x,y\rangle$.) 
As is well known (Papanicolaou and Varadhan~\cite{Papanicolaou-Varadhan}, Kozlov~\cite{Kozlov} and K\"unnermann~\cite{Kuennermann} and the book Jikov, Kozlov and Oleinik~\cite{JKO}),
\begin{equation}
\label{E:1.5}
c_\eff(t) = \inf_{g\in L^\infty(\BbbP)}\, \E\biggl(\,\,\sum_{x=\hate_1,\dots,\hate_d} a_{0,x}(\omega)\bigl|t\cdot x+\nabla_xg(\omega)\bigr|^2\biggr).
\end{equation}
Here $\E$ is expectation with respect to~$\BbbP$, $\hate_1,\dots,\hate_d$ are the unit coordinate vectors in~$\R^d$ and $\nabla_x g(\omega):=g\circ\tau_x(\omega)-g(\omega)$ is the gradient of~$g$ in direction of~$x\in\Z^d$ with $\tau_x$ denoting the shift by~$x$; i.e., the map on $\Omega$ such that~$a_{yz}(\tau_x\omega):=a_{x+y,x+z}(\omega)$. The expression in \eqref{E:1.5} can be interpreted as the  Dirichlet energy density --- with the spatial average naturally replaced by the ensemble average.

\smallskip
Once the (deterministic) leading-order of $C^{\,\eff}_L(t)$ has been identified, the next natural question is that of fluctuations. It is obvious --- e.g., by checking the explicitly computable $d=1$ case --- that no universal limit law can be expected for general conductance distributions, but progress could perhaps be made for the (physically most appealing) case of i.i.d.~conductances. However, even here establishing just the order of magnitude of the fluctuations turned out to be an arduous task. Indeed, more than a decade ago Wehr~\cite{Wehr} showed that $\Var(C^{\,\eff}_L)\ge c L^{d}$ for some~$c>0$ but a corresponding upper bound has been furnished only recently by Gloria and Otto~\cite{Gloria-Otto}. Both of these results contain important technical caveats: Wehr requires continuously distributed $a_{xy}$'s while Gloria and Otto express their results under a ``massive'' cutoff (at least in dimension 2). 

Gloria and Otto drew important ideas from an earlier unpublished note by Naddaf and Spen\-cer~\cite{Naddaf-Spencer} where (optimal) upper bounds on the variance are derived for certain correlated conductance laws. The main tool of \cite{Naddaf-Spencer} is the \emph{Meyers estimate} (cf Meyers~\cite{Meyers}), to be used heavily in the present note as well. Other noteworthy earlier derivations of (suboptimal) variance upper bounds include an old paper by Yurinskii~\cite{Yurinskii} and a more recent paper by Benjamini and Rossignol~\cite{Benjamini-Rossignol}. Closely related to these estimates are recent derivations of quantitative central limit theorems for random walk among random conductances and approximations of the limiting diffusivity matrix, e.g., Caputo and Ioffe~\cite{Caputo-Ioffe}, Bourgeat and Piatnitski~\cite{Bourgeat-Piatnitski}, Boivin~\cite{Boivin}, Mourrat~\cite{Mourrat}, Gloria and Mourrat~\cite{Gloria-Mourrat1,Gloria-Mourrat2}, etc.
Incidentally, the Meyers estimate is also the key tool in~\cite{Caputo-Ioffe}.

\smallskip
The goal of the present note is to prove that, for i.i.d.\ conductances which are (deterministically) not too far from a constant, the asymptotic law of $C^{\,\eff}_L(t)$ is in fact Gaussian. Let $\NN(\mu,\sigma^2)$ denote the normal random variable with mean~$\mu$ and variance~$\sigma^2$. Then we have: 

\begin{theorem}
\label{thm1}
Suppose the conductances $a_{xy}$ are i.i.d. For each $d\ge1$, there is $\lambda=\lambda(d)\in(0,1)$ such that the following holds: If \eqref{E:1.3} is satisfied $\BbbP$-a.s.\ with this~$\lambda$, then for each $t\in\R^d$ there is $\sigma_t^2\in[0,\infty)$ such that
\begin{equation}
\label{E:2.1}
\frac{C_L^{\,\eff}(t)-\E C_L^{\,\eff}(t)}{|\Lambda_L|^{1/2}}\,\,\,\overset{\text{\rm law}}{\underset{L\to\infty}\longrightarrow}\,\,\,\NN(0,\sigma_t^2).
\end{equation}
Whenever the conductance law is non-degenerate we have $\sigma_t^2>0$ for all $t\ne0$.
\end{theorem}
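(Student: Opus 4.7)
The plan is to write $C^{\,\eff}_L(t) - \E C^{\,\eff}_L(t)$ as a sum of martingale differences and invoke the Lindeberg-Feller Martingale CLT. Enumerate the edges of $\B(\Lambda_L)$ in any fixed order $e_1,\dots,e_{N_L}$ with $N_L = O(L^d)$, and let $\FF_k := \sigma(a_{e_1},\dots,a_{e_k})$. Set
\begin{equation*}
V_k := \E\bigl[C^{\,\eff}_L(t)\,\big|\,\FF_k\bigr] - \E\bigl[C^{\,\eff}_L(t)\,\big|\,\FF_{k-1}\bigr] = \E\bigl[C^{\,\eff}_L(t)(\omega) - C^{\,\eff}_L(t)(\omega^{(k)})\,\big|\,\FF_k\bigr],
\end{equation*}
where the second equality uses that resampling $a_{e_k}$ independently (producing $\omega^{(k)}$) has the effect of conditioning on $\FF_{k-1}$ once we average over the fresh variable. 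Since $C^{\,\eff}_L(t) = Q_{\Lambda_L}(\phi_L)$ with $\phi_L$ the Dirichlet minimizer with boundary data $t\cdot x$, the envelope theorem gives $\partial C^{\,\eff}_L(t)/\partial a_e = [\phi_L(y)-\phi_L(x)]^2$ for $e=\langle x,y\rangle$. Concavity of $C^{\,\eff}_L(t)$ in each $a_{e_k}$ (a minimum of affine functions of $a_{e_k}$) therefore controls $|V_k|$ by the squared gradient of~$\phi_L$, and of its resampled counterpart $\phi_L^{(k)}$, on the edge~$e_k$.

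The decisive input is the Meyers estimate: for $\lambda$ sufficiently close to~$1$ (depending on~$d$), the gradient of $\phi_L$ lies in $L^{2+\delta}$ with norm bounded independently of~$L$, for some $\delta=\delta(\lambda,d)>0$. This gives a uniform bound $\E|V_k|^{2+\delta/2}\le C_\lambda$ and hence the Lindeberg condition
\begin{equation*}
\sum_{k=1}^{N_L}\E\bigl[V_k^2\,\1\{|V_k|>\epsilon|\Lambda_L|^{1/2}\}\,\big|\,\FF_{k-1}\bigr]\longrightarrow 0\quad\text{in $\BbbP$-probability}
\end{equation*}
by Chebyshev. The main obstacle is then to identify the limit $\sum_k \E[V_k^2\,|\,\FF_{k-1}] = \sigma_t^2\,|\Lambda_L|\,(1+o_\BbbP(1))$ for a deterministic $\sigma_t^2\ge 0$. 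The natural route is to replace $\phi_L$ by the homogenization ansatz $x\mapsto t\cdot x + \chi_t(\omega,x)$, with $\chi_t$ the stationary corrector corresponding to the variational problem~\eqref{E:1.5}; each~$V_k$ then becomes, up to a small error, a stationary local functional of the environment around~$e_k$, and the multidimensional ergodic theorem produces a deterministic limit. The delicate point is to show that the replacement error contributes $o(L^d)$, which requires Meyers-type bounds together with boundary-layer estimates to handle the mismatch between the Dirichlet minimizer~$\phi_L$ and the stationary corrector (the latter being defined only as a gradient field in low dimensions).

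Finally, the non-degeneracy $\sigma_t^2>0$ when $t\ne 0$ and the conductance law is non-degenerate follows from the matching lower bound $\Var C^{\,\eff}_L(t)\ge cL^d$ established by Wehr under the i.i.d.\ assumption. Indeed, the uniform $L^{2+\delta/2}$-integrability of the rescaled fluctuations, guaranteed by Meyers, lets this lower bound pass to the limit and forces $\sigma_t^2\ge c>0$.
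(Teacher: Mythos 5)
Your overall strategy (martingale differences in the edge variables, a Meyers estimate for integrability, corrector replacement plus the ergodic theorem for the conditional variances) is the same as the paper's, but there are concrete gaps. The most serious is quantitative: since $V_k$ is comparable to a conditional expectation of $|\nabla\phi_L|^2$ on the edge $e_k$, a bound on $\nabla\phi_L$ in $L^{2+\delta}$ only puts $V_k$ in $L^{1+\delta/2}$ --- not even square integrable, so neither the martingale CLT nor your claimed bound $\E|V_k|^{2+\delta/2}\le C_\lambda$ can be run. What is needed is the Meyers estimate with exponent $p>4$ (Proposition~\ref{prop:Meyers estimate}), and this is precisely where the small ellipticity contrast enters; the generic $p=2+\delta$ version holds for every elliptic contrast and would not explain the hypothesis on $\lambda$. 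Second, ``any fixed order'' of the edges is not enough: the summands $\E[V_k^2\,|\,\FF_{k-1}]$ depend on the ordering through the conditioning $\sigma$-algebra, and to apply the spatial ergodic theorem one needs them to form a stationary field; the paper constructs a shift-invariant lexicographic ordering for exactly this purpose (Section~\ref{sec2.2}). Third, the step you yourself label ``the main obstacle'' --- showing that $V_k$ is, up to errors summing to $o(L^d)$, a stationary local functional --- is where most of the work lies and is left unproved. The paper resolves it with a rank-one perturbation identity for the Green function (Proposition~\ref{prop-perturbed-corrector}), which expresses how $\nabla\Psi_{\Lambda_L}$ changes when a single conductance is modified and turns the resampling difference $C^{\,\eff}_L(t)(\omega)-C^{\,\eff}_L(t)(\omega^{(k)})$ into a tractable multiplicative correction; without some such device the replacement by the stationary corrector is not justified.

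Finally, the non-degeneracy argument via Wehr's lower bound does not cover the stated theorem: Wehr's estimate $\Var(C_L^{\,\eff})\ge cL^d$ is proved for continuously distributed conductances, whereas the theorem allows an arbitrary non-degenerate single-conductance law on $[\lambda,1/\lambda]$, atoms included. (Your uniform-integrability step for passing the variance lower bound to the limit is fine; it is the input that is unavailable in this generality.) The paper instead argues directly: $\sigma_t^2=0$ would force the conditional expectations $(t,\hat Z(0,i)t)$ to vanish a.s., and since the relevant prefactor is bounded away from zero this forces $\nabla_i(t\cdot\psi)=0$ a.s.\ for every $i$, hence $c_\eff(t)=0$, contradicting uniform ellipticity for $t\ne0$.
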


The proof also immediately yields:

\begin{corollary}
\label{cor1.2}
Under the conditions of Theorem~\ref{thm1}, 
\begin{equation}
%\label{}
\frac1{|\Lambda_L|}\text{\rm Var}\bigl(C_L^{\,\eff}(t)\bigr)
\,\,\underset{L\to\infty}\longrightarrow\,\,\sigma_t^2,
\end{equation}
where $\sigma_t^2$ is as in \eqref{E:2.1}.
\end{corollary}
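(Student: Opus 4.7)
The plan is to deduce Corollary~\ref{cor1.2} from Theorem~\ref{thm1} by upgrading convergence in distribution to convergence of second moments. Weak convergence of the random variables $X_L := (C_L^{\,\eff}(t) - \E C_L^{\,\eff}(t))/|\Lambda_L|^{1/2}$ to $\NN(0,\sigma_t^2)$ alone does not imply $\E[X_L^2] \to \sigma_t^2$; the missing ingredient is uniform integrability of the family $\{X_L^2\}_{L \ge 1}$. By a standard Vitali-type argument, it suffices to establish a uniform bound of the form $\sup_{L \ge 1}\E\bigl[|X_L|^{2+\delta}\bigr] < \infty$ for some $\delta > 0$.

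To obtain such an $L^{2+\delta}$ bound, my plan is to exploit the same machinery that underlies the proof of Theorem~\ref{thm1}. The effective conductance admits the representation $C_L^{\,\eff}(t) = Q_{\Lambda_L}(f_L)$ where $f_L$ is the (a.s.\ unique) minimizer of the Dirichlet energy with boundary data $x \mapsto t\cdot x$. Under the small-ellipticity-contrast assumption \eqref{E:1.3}, the Meyers estimate provides an integrability improvement: there exists $p = p(d,\lambda) > 2$ (which can be made arbitrarily close to~$2$ by choosing~$\lambda$ close to~$1$) such that the gradient $\nabla f_L$ is controlled in $\ell^p$ uniformly in $L$, with bounds depending only on $\lambda$, $d$, and~$|t|$. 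Squaring the pointwise energy density and using the elementary inequality $\sum_e a_e |\nabla_e f_L|^2 \le \lambda^{-1}\sum_e |\nabla_e f_L|^2$, this translates into an $L^{p/2}$-type bound on the centered and normalized quantity, i.e.,
\begin{equation}
\sup_{L \ge 1}\,\E\Bigl[\bigl|X_L\bigr|^{p/2}\bigr/ \text{(normalization)}\Bigr]^{\,\cdot} < \infty.
\end{equation}
More concretely, combining the Meyers $L^p$-bound on the minimizer with a martingale-difference decomposition of $C_L^{\,\eff}(t) - \E C_L^{\,\eff}(t)$ in a suitable (e.g.~Doob) filtration and invoking the Burkholder--Davis--Gundy inequality yields, for some $q > 2$, the bound $\E[|X_L|^q] \le C(d,\lambda,|t|)$, uniformly in~$L$.

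With the $L^q$ bound for $q>2$ in hand, $\{X_L^2\}$ is uniformly integrable, so Theorem~\ref{thm1} together with Skorokhod (or directly $\E[X_L^2] = \int_0^\infty 2s\,\BbbP(|X_L|>s)\,ds$ combined with dominated convergence via the uniform tail bound) yields $\E[X_L^2] \to \E[\NN(0,\sigma_t^2)^2] = \sigma_t^2$, which is precisely the statement of the corollary. The main technical point in this scheme is ensuring that the Meyers-based $L^p$ estimate on the minimizer really transfers to an $L^q$-bound with $q>2$ on $X_L$ (not merely on $C_L^{\,\eff}(t)/|\Lambda_L|$, whose centering around its mean is the delicate part); this is where concentration-type tools (martingale differences and a BDG-type inequality) are essential, and this step should be already available as a byproduct of the proof of Theorem~\ref{thm1}.
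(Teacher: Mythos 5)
Your strategy is sound and would work, but it is a genuinely different -- and more laborious -- route than the paper's. The paper's proof is two lines: since the $Z_k$ of \eqref{E:2.3} are square-integrable martingale increments, orthogonality gives $\Var(C_L^{\,\eff}(t))=\sum_{k}\E(Z_k^2)$, and this is exactly the expectation of the left-hand side of \eqref{E:3.3}; because Proposition~\ref{prop-MCLT1} establishes that convergence not just in probability but in $L^1(\BbbP)$, one simply takes expectations and is done. No uniform integrability of $X_L^2$ and no moment bound on the full sum are needed. Your route instead requires $\sup_L\E|X_L|^{q}<\infty$ for some $q>2$, a bound the paper never proves; it is obtainable from \eqref{E:3.22q} (which gives $\sum_k\E|Z_k|^{p/2}\le C|\Lambda_L|$ with $p/2>2$) via Rosenthal's or the Burkholder--Davis--Gundy inequality together with H\"older, but that is an extra argument you would have to write out -- it is not literally ``a byproduct'' of the proof of Theorem~\ref{thm1}, which only uses the increment moments for the Lindeberg condition. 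Two smaller points: your parenthetical that $p$ ``can be made arbitrarily close to $2$ by choosing $\lambda$ close to $1$'' has the logic backwards (taking $\lambda$ close to $1$ is what buys a \emph{larger} exponent, and the paper needs $p>4$), and your displayed inequality with the dangling ``(normalization)'' is not meaningful as written. The lesson to take away is that the $L^1$ statement in Proposition~\ref{prop-MCLT1} was included precisely so that the variance convergence comes for free from the martingale orthogonality identity.
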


A few remarks are in order:

\begin{remarks}
(1) Notice that \eqref{E:2.1} does not give us much information on the ``order expansion'' of $C_L^{\,\eff}(t)$. Indeed, we know that $\E C_L^{\,\eff}(t)$ is to the leading order equal to $c_\eff(t)|\Lambda_L|$ but when this order is subtracted, the next-order term is (presumably) of boundary size. In $d\ge3$, this is still larger than the typical size of the fluctuations. (This is referred to as the systematic error in \cite{Gloria-Otto}.) Notwithstanding, what \eqref{E:2.1} does tell us is the character of the leading order \emph{random} term.

(2) There is in fact a formula for $\sigma_t^2$, see Theorem~\ref{thm2} below, which also shows that $t\mapsto\sigma_t^2$ is a  bi-quadratic (and thus smooth) function of~$t$. However, the formula involves complicated conditioning and does not seem very useful for practical computations.

(3) There is no restriction on the single-conductance law other than \eqref{E:1.3}. In particular, this law can have a non-absolutely continuous part including atoms. Certain technical problems do arise at this level of generality; see Section~\ref{sec2.5} which, we believe, is of independent interest.
\end{remarks}

We prove Theorem~\ref{thm1} by reducing it to the Martingale Central Limit Theorem. There are two main technical ingredients: homogenization theory (which enables a stationary {martingale approximation} of~$C^{\,\eff}_L(t)$) and analytical estimates for finite-volume harmonic coordinates (by which we control the errors in the martingale approximation). The restrictions to rectangular boxes, linear boundary conditions and small ellipticity contrasts permit us to encapsulate the analytical input into a single step, the {Meyers estimate}, cf Proposition~\ref{prop:Meyers estimate} and~Theorem~\ref{Lpbound-K}. These restrictions can be relaxed but not without  additional arguments not all of which have been handled satisfactorily at this time. These are deferred to a follow-up paper. 

\smallskip
We remark that two recent preprints have been brought to our attention at the time this work was first announced in conference talks. First, Nolen~\cite{Nolen} has established a normal approximation to the effective conductance defined over a periodic environment, in the limit when the period tends to infinity. Second, in a preprint that was posted at the time of writing the present note, Rossignol~\cite{Rossignol} formulates and proves a central limit law for the \emph{effective resistance} for the corresponding problem on a torus. Nolen defines the problem over continuum, albeit with a rather strong assumption on an underlying Gaussian i.i.d.\ structure. Rossignol's setting is based on minimizing the electrostatic energy over {currents} (rather than potentials) subject to a restriction on the total current flowing around the torus. By a well known reciprocity relation between effective conductance and resistance, these papers appear to address similar problems.

The present paper differs from both Nolen~\cite{Nolen} and Rossignol~\cite{Rossignol} primarily in its emphasis on fixed (Dirichlet), as opposed to periodic, boundary conditions. Indeed, a majority of our technical work is aimed at controlling the resulting boundary effects. Also the way a Gaussian limit law is established is quite different: Nolen appeals to Stein's method, Rossignol uses noise-sensitivity tools while we invoke the Martingale Central Limit Theorem. A notable deficiency of the present paper compared to~\cite{Nolen} and~\cite{Rossignol} is the limitation on ellipticity contrast. Nolen overcomes this by an appeal to Gloria and Otto~\cite{Gloria-Otto}, although this ultimately precludes the most interesting conclusion (namely, the CLT without ``massive'' cutoff) in~$d=2$. Rossignol's approach appears to work seamlessly for all elliptic product laws thanks to ``enhanced'' form of averaging coming from the periodic boundary condition. 

While the Gloria-Otto method can perhaps be adapted to our situation as well, just as for Nolen~\cite{Nolen} it fails to deliver the desired conclusion in $d=2$. The issue is that the method yields bounds on the moments of the corrector, which diverge in $d=2$, while we need only moments of the gradients of the corrector. (\textit{Update in revised version}: This issue has now been overcome in Gloria, Neukamm and Otto  \cite{GNO}, albeit only in either infinite volume or for periodic boundary conditions.) Notwithstanding, our point of view is that the moment bounds seem to be a separate technical matter, and so, for the present paper, we decided to sacrifice on generality of the distribution and derived the CLT only in the simplest, albeit still physically interesting, case.

%%%%%%%%%%%%%%%%%%%%%%%%%%%%%
\section{Key ingredients}
\label{sec2}\noindent
Here we discuss the strategy of the proof of Theorem~\ref{thm1} and state its principal ingredients in the form of suitable propositions. The actual proofs begin in Section~\ref{sec3}.
%%%%%%%%%%%%%%%%%%%%%%%%%%%%%

\vglue-1mm

\subsection{Martingale approximation}
 A standard way to control fluctuations of a function of i.i.d.\ random variables is by way of a \emph{martingale approximation}. Let us order the random variables $\{a_{xy}\colon \langle x,y\rangle\in\B(\Lambda_L)\}$ in any (for now) convenient way and let $\FF_k$ to be the $\sigma$-algebra generated by the first~$k$ of them. (Since we only aim at a distributional convergence, the $\sigma$-algebras may depend on~$L$.) Then
\begin{equation}
\label{E:2.1q}
C_L^{\,\eff}(t)-\E C_L^{\,\eff}(t)
=\sum_{k=1}^{|\B(\Lambda_L)|}
Z_k,
\end{equation}
where
\begin{equation}
\label{E:2.3}
Z_k:=\E\bigl(C_L^{\,\eff}(t)\big|\scrF_k\bigr)-\E\bigl(C_L^{\,\eff}(t)\big|\scrF_{k-1}\bigr).
\end{equation}
Obviously, the quantity $Z_k$ is a martingale increment. In order to show distributional convergence to $\NN(0,\sigma^2)$, it suffices to verify the (Lindenberg-Feller-type) conditions of the Martingale Central Limit Theorem due to Brown~\cite{Brown}:
%\settowidth{\leftmargini}{(11)}
\begin{enumerate}
\item[(1)] 
There exists $\sigma^2\in[0,\infty)$ such that
\begin{equation}
\label{E:2.4a}
\frac1{|\Lambda_L|}\sum_{k=1}^{|\B(\Lambda_L)|}
\E(Z_k^2|\FF_{k-1})\,\underset{L\to\infty}\longrightarrow\,\sigma^2
\end{equation}
in probability, and
\item[(2)]
for each $\epsilon>0$,
\begin{equation}
\label{E:2.5a}
\frac1{|\Lambda_L|}\sum_{k=1}^{|\B(\Lambda_L)|}
\E\bigl(\,Z_k^2\1_{\{|Z_k|>\epsilon|\Lambda_L|^{1/2}\}}\big|\FF_{k-1}\bigr)\,\underset{L\to\infty}\longrightarrow\,0
\end{equation}
in probability.
\end{enumerate}
The sums on the left suggest invoking the Spatial Ergodic Theorem, but for that we would need to ensure that the individual terms in the sum are (at least approximated by) functions that are stationary with respect to shifts of~$\Z^d$. This necessitates the following additional input:
%\settowidth{\leftmargini}{(11)}
\begin{enumerate}
\item[(i)] a specific choice of the ordering of the edges, and
\item[(ii)] a more explicit representation for $Z_k$.
\end{enumerate}
We will now discuss various aspects of these in more detail.

\subsection{Stationary edge ordering}
\label{sec2.2}\noindent
Recall that $\B(\Z^d)$ denotes the set of all (unordered) edges in~$\Z^d$. We will order $\B(\Z^d)$ as follows: Let $\preceq$ denote the lexicographic ordering of the vertices of~$\Z^d$. Explicitly, for $x=(x_1,\dots,x_d)$ and $y=(y_1,\dots,y_d)$ we have $x\preceq y$ if either $x=y$ or $x\ne y$ and there exists $i\in\{1,\dots,d\}$ such that $x_j=y_j$ for all $j<i$ and $x_i<y_i$. We will write $x\prec y$ if $x\ne y$ and $x\preceq y$. 

For the purpose of defining a stationary ordering of the edges, and also easier notation in some calculations that are to follow, we now identify $\B(\Z^d)$ with the set of pairs $(x,i)$, where $x\in\Z^d$ and $i\in\{1,\dots,d\}$, so that $(x,i)$ corresponds to the edge between the vertices $x$ and $x+\hate_i$. We will then write
\begin{equation}
(x,i)\preceq (y,j)
\quad\text{if}\quad\left\{\begin{aligned}
&\text{either } x\prec y
\\
&\text{or } 
\quad x=y \text{ and } i\le j.
\end{aligned}\right.
\end{equation}
Again, $(x,i)\prec(y,j)$ if $(x,i)\preceq(y,j)$ but $(x,i)\ne(y,j)$. It is easy to check that $\preceq$ is a complete order on~$\B(\Z^d)$. A key fact about this ordering is its stationarity with respect to shifts:

\begin{lemma}
\label{lemma-2.3}
If $(x,i)\preceq(y,j)$ then also $(x+z,i)\preceq(y+z,j)$ for all $z\in\Z^d$.
\end{lemma}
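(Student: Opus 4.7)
The lemma is a translation-invariance statement about the lexicographic edge order, and it reduces immediately to the analogous property of the lexicographic vertex order $\preceq$ on $\Z^d$. The plan is therefore to first record the vertex-level statement and then push it through the definition of the edge order.

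First I would prove: for all $x,y,z \in \Z^d$, one has $x \preceq y$ if and only if $x+z \preceq y+z$, and similarly for $\prec$. The argument is a one-liner from the definition: equality $x_j = y_j$ is equivalent to $(x+z)_j = (y+z)_j$, and strict inequality $x_i < y_i$ is equivalent to $(x+z)_i < (y+z)_i$, so the smallest index $i$ witnessing $x \prec y$ is the same as the one witnessing $(x+z) \prec (y+z)$.

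Next I would split into the two cases in the definition of $(x,i) \preceq (y,j)$. If $x \prec y$, the vertex-level claim gives $x+z \prec y+z$, hence $(x+z,i) \preceq (y+z,j)$ regardless of $i,j$. If instead $x = y$ and $i \le j$, then $x+z = y+z$ and $i \le j$ still holds, so again $(x+z,i) \preceq (y+z,j)$. This exhausts both cases and completes the proof.

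There is essentially no obstacle here; the only point that requires any care is matching the case split in the edge-order definition to the two ways the hypothesis $(x,i) \preceq (y,j)$ can be satisfied, and making sure that the translation by $z$ preserves each case separately. A parallel argument (or simply invoking the above with the roles of the pairs swapped) also shows the corresponding statement for $\prec$, which is useful downstream when one wants to order the edges of a shifted box in a shift-consistent manner.
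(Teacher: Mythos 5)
Your proof is correct and follows the same (essentially trivial) route the paper takes; the paper simply dismisses the lemma as "a trivial consequence of the definition," and your write-up fills in exactly the details that dismissal leaves implicit — translation invariance of the lexicographic vertex order plus the two-case split in the edge order.
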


\begin{proofsect}{Proof}
This is a trivial consequence of the definition.
\end{proofsect}

Now we proceed to identify the sigma algebras $\{\scrF_k\}$ in the martingale representation above. Recall that $\Omega:=\bigotimes_{\B(\Z^d)}[\lambda,\ffrac1\lambda]$ denotes the set of conductance configurations satisfying \eqref{E:1.3}. Writing~$\omega$ for elements of $\Omega$ we use $a_{xy}=a_{xy}(\omega)$, for $\langle x,y\rangle\in\B(\Z^d)$, to denote the coordinate projection corresponding to edge~$\langle x,y\rangle$. Given $L\ge1$, set $N:=|\B(\Lambda_L)|$ and let $b_1,\dots,b_N$ be the enumeration of $\B(\Lambda_L)$ induced by the ordering of edges $\preceq$ defined above. Then we set
\begin{equation}
%\label{}
\scrF_k:=\sigma(\omega_b\colon b\preceq b_k),\qquad k=1,\dots,N,
\end{equation}
with
\begin{equation}
%\label{}
\scrF_0:=\sigma(\omega_b\colon b\prec b_1).
\end{equation}
By definition $\scrF_0$ is independent of the edges in $\B(\Lambda_L)$ while $\scrF_N$ determines the entire configuration in $\B(\Lambda_L)$. Note also that $\scrF_k$ includes information about edges that are not in $\B(\Lambda_L)$. This will be of importance once we replace $Z_k$ by a random variable that depends on all of~$\omega$.

%%%% COUNTER FOR FIGURES
%\newcounter{obrazek}

\begin{figure}[t]
%\refstepcounter{obrazek}
\centerline{\includegraphics[width=2.8truein]{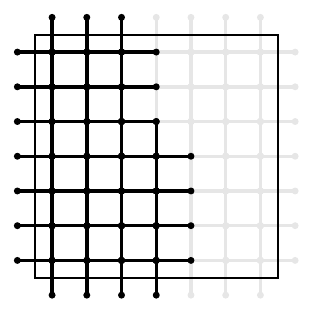}}
\bigskip
\begin{quote}
\small
{\sc Figure~1.}\ The set of edges (drawn in bold) in~$\B(\Lambda_L)$ that determine the events in~$\scrF_k$. Here $d:=2$,~$L:=7$ and $k:=61$. In our ordering of edges on~$\Z^2$, this corresponds to the edges up to and including $(x,i)$ for $x:=(3,3)$ and $i:=2$.  %24+5+28+4
\label{fig1}
\normalsize
\end{quote}
\end{figure}

\subsection{An explicit form of martingale increment}
\label{sec2.3}\noindent
Having addressed the ordering of the edges, and thus the definition of the $\sigma$-algebras $\scrF_k$, we now proceed to derive a more explicit form of the quantity $Z_k$ from \eqref{E:2.3}. Given $\omega\in\Omega$, define the operator $\LL_\omega$ on ($\R$ or~$\R^d$-valued) functions on the lattice via
\begin{equation}
\label{E:2.8a}
(\LL_\omega f)(x):=\sum_{y\colon\langle x,y\rangle\in\B(\Z^d)}a_{xy}(\omega)\,\bigl[f(y)-f(x)\bigr].
\end{equation}
This is an elliptic finite-difference operator --- a random Laplacian --- that arises as the generator of the random walk among random conductances $\{a_{xy}(\omega)\}$ (see, e.g.,~Biskup~\cite{Biskup-review} for a review of these connections). The existence/uniqueness for the associated Dirichlet problem implies that for any finite $\Lambda\subset\Z^d$ there is a unique $\Psi_\Lambda\colon\Omega\times(\Lambda\cup\partial\Lambda)\to\R^d$ such that $x\mapsto\Psi_\Lambda(\omega,x)$ obeys
\begin{equation}
\label{eqn-harmonic-coordinate-finite}
\left\{\begin{alignedat}{3}
&\LL_\omega\,\Psi_\Lambda(\omega,x)=0, \quad&\qquad&x\in\Lambda,
\\*[1mm]
&\Psi_\Lambda(\omega,x)=x, &\qquad &x\in\partial\Lambda.
\end{alignedat}\right.
\end{equation}
It is then easily checked that $f(x):=t\cdot\Psi_\Lambda(\omega,x)$ is the unique minimizer of $f\mapsto Q_\Lambda(f)$ over all functions~$f$ with the boundary values $f(x)=t\cdot x$ for $x\in\partial\Lambda$. In particular, we have
\begin{equation}
%\label{}
C_L^{\,\eff}(t)=Q_{\Lambda_L}\bigl(\,t\cdot\Psi_{\Lambda_L}\bigr)
\end{equation}
for all $t\in\R^d$. The function $x\mapsto\Psi_\Lambda(\omega,x)$ will sometimes be referred to as a \emph{finite-volume harmonic coordinate}. (The first line in \eqref{eqn-harmonic-coordinate-finite} justifies this term.)

The minimum value $Q_\Lambda(t\cdot\Psi_\Lambda)$ is a non-decreasing, continuous and concave function of $\{a_{xy}\colon \langle x,y\rangle\in\B(\Lambda)\}$. Thanks to the uniqueness of the solution to \eqref{eqn-harmonic-coordinate-finite},  $Q_\Lambda(t\cdot\Psi_\Lambda)$ is also continuously differentiable in $a_{xy}$'s and it is easy to see, keeping in mind again that $t\cdot\Psi_\Lambda$ is the minimizer and using the Euler-Lagrange equation for the minimization problem, that
\begin{equation}
\label{E:2.4}
\frac{\partial}{\partial a_{xy}}Q_\Lambda(t\cdot\Psi_\Lambda)=\bigl[t\cdot\Psi_\Lambda(\omega,y)-t\cdot\Psi_\Lambda(\omega,x)\bigr]^2,\qquad \langle x,y\rangle\in\B(\Lambda).
\end{equation}
This relation is of fundamental importance for what is to come.

Abusing the notation slightly, let $\omega_1,\dots,\omega_N$, with $N:=|\B(\Lambda)|$, denote the components of the configuration $\omega$ over $\B(\Lambda)$ labeled in the order induced by $\preceq$ defined above. Let
\begin{equation}
%\label{}
q(\omega_1,\dots,\omega_N):=Q_\Lambda(t\cdot\Psi_\Lambda)
\end{equation}
mark explicitly the dependence of the right-hand side on these variables. The product structure of the underlying probability measure then allows us to give a more explicit expression for the increment $Z_k=Z_k(\omega_1,\dots,\omega_k)$:
\begin{equation}
\label{E:2.10}
\begin{aligned}
Z_k&=\int \BbbP(\textd\omega_k')\dots\BbbP(\textd\omega_N')\bigl[q(\omega_1,\dots,\omega_k,\omega_{k+1}',\dots,\omega_N')
\\
&\qquad\qquad\qquad\qquad\qquad\qquad\quad-q(\omega_1,\dots,\omega_{k-1},\omega_k',\dots,\omega_N')\bigr]
\\
&=\int \BbbP(\textd\omega_k')\dots\BbbP(\textd\omega_N')\int_{\omega_k'}^{\omega_k}\textd\tilde\omega_k\,
\frac\partial{\partial\tilde\omega_k}q(\omega_1,\dots,\omega_{k-1},\tilde\omega_k,\omega_{k+1}',\dots,\omega_N'),
\end{aligned}
\end{equation}
with the inner integral in Riemann sense. A key point is that the last partial derivative is (modulo notational changes) given by \eqref{E:2.4}, i.e., $Z_k$ is the modulus-squared of the gradient of $t\cdot\Psi_\Lambda$ over the $k$-th edge in $\B(\Lambda)$ integrated over part of the variables. To see that $Z_k$ is a martingale increment note that the Riemann integral changes sign when its limits are interchanged.

%\begin{remark}
%A martingale representation is a natural tool that has been used in the present context before, e.g., in Gloria and Otto~\cite{Gloria-Otto} and Wehr~\cite{Wehr}. Such representations are of course ubiquitous in both probability and analysis, whenever questions of concentration of measure are of concern.
%\end{remark}

\subsection{Input from homogenization theory}
In order to apply the Spatial Ergodic Theorem to the sums on the left of \twoeqref{E:2.4a}{E:2.5a}, we will substitute for~$Z_k$ a quantity that is stationary with respect to the shifts of~$\Z^d$. This will be achieved by replacing the discrete gradient of $\Psi_\Lambda$ --- which by \eqref{E:2.4} enters as the partial derivative of~$q$ in the formula for~$Z_k$ --- by the gradient of its infinite-volume counterpart, to be denoted by $\psi$. The existence and properties of the latter object are standard:

\begin{proposition}[Infinite-volume harmonic coordinate]
\label{prop-corrector}
Suppose the law of the conductances is ergodic with respect to the shifts of~$\Z^d$ and assume \eqref{E:1.3} for some $\lambda\in(0,1)$. Then there is a function $\psi\colon\Omega\times\Z^d\to\R^d$ such that
\settowidth{\leftmargini}{(11,)}
\begin{enumerate}
\item[(1)] ($\psi$ is $\LL_\omega$-harmonic)
$\LL_\omega\psi(\omega,x)=0$ for all $x$ and $\BbbP$-a.e.~$\omega$.
\item[(2)] ($\psi$ is shift covariant) For $\BbbP$-a.e.~$\omega$ we have $\psi(\omega,0):=0$ and
\begin{equation}
%\label{}
\psi(\omega,y)-\psi(\omega,x)=\psi(\tau_x\omega,y-x),\qquad x,y\in\Z^d.
\end{equation}
\item[(3)] ($\psi$ is square integrable)
\begin{equation}
%\label{}
\E\biggl(\,\,\sum_{x=\hate_1,\dots,\hate_d}a_{0,x}(\omega)\bigl|\psi(\omega,x)\bigr|^2\biggr)<\infty.
\end{equation}
\item[(4)] ($\psi$ is approximately linear) The corrector
$\chi(\omega,x):=\psi(\omega,x)-x$ satisfies
\begin{equation}
\label{E:2.16}
\lim_{|x|\to\infty}\frac{\E\bigl(|\chi(\omega,x)|^2\bigr)}{|x|^2}=0.
\end{equation}
\end{enumerate}
\end{proposition}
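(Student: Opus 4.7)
The natural framework is the Hilbert-space construction of the corrector going back to Kozlov and Papanicolaou--Varadhan. Define the space $\HH$ of square-integrable curl-free cocycle gradients: measurable $\Phi=(\Phi_1,\dots,\Phi_d)\colon\Omega\to\R^d$ satisfying
\begin{equation}
\Phi_i(\omega)+\Phi_j(\tau_{\hate_i}\omega)=\Phi_j(\omega)+\Phi_i(\tau_{\hate_j}\omega),\qquad i,j=1,\dots,d,
\end{equation}
equipped with the weighted inner product $\langle\Phi,\Phi'\rangle:=\E\bigl(\sum_i a_{0,\hate_i}\Phi_i\Phi'_i\bigr)$; uniform ellipticity \eqref{E:1.3} makes this norm equivalent to the standard $L^2$ norm. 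Inside $\HH$ let $\GG$ denote the closure of the subspace of potential gradients $\bigl\{(\nabla_{\hate_1}g,\dots,\nabla_{\hate_d}g)\colon g\in L^\infty(\BbbP)\bigr\}$ with $\nabla_y g(\omega):=g(\tau_y\omega)-g(\omega)$. For each coordinate direction $\hate_k$ I regard the constant field $\hate_k$ as an element of $\HH$ and let $-\chi^{(k)}\in\GG$ denote its orthogonal projection onto $\GG$; assembling the resulting gradient fields over $k=1,\dots,d$ yields the desired $\R^d$-valued gradient of $\psi$.

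Once the gradient field is in hand, property (2) is built into the construction, and $\psi$ itself is recovered by summing its gradient along any nearest-neighbor lattice path from $0$ to $x$, with path-independence guaranteed by the cocycle condition and the normalization $\psi(\omega,0)=0$. Property (1) is equivalent to orthogonality of $\nabla\psi$ to $\GG$: testing against an arbitrary potential gradient $\nabla g$ and performing a discrete integration by parts yields
\begin{equation}
0=\langle\nabla\psi,\nabla g\rangle=\E\bigl(g(\omega)\,\LL_\omega\psi(\omega,0)\bigr)
\end{equation}
for every $g\in L^\infty(\BbbP)$, whence $\LL_\omega\psi(\omega,0)=0$ almost surely, and shift covariance from (2) extends this to every $x$. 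Property (3) is immediate from $\nabla\psi\in\HH$ together with uniform ellipticity.

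The real work lies in (4). By construction each $\chi^{(k)}\in\GG$ is orthogonal to the constant field $\hate_k\in\HH$, which forces $\E\bigl(\chi(\omega,\hate_i)\bigr)=0$. Telescoping along a coordinate axis gives
\begin{equation}
\chi(\omega,n\hate_i)=\sum_{k=0}^{n-1}\chi(\tau_{k\hate_i}\omega,\hate_i),
\end{equation}
so Birkhoff's ergodic theorem yields $n^{-1}\chi(\omega,n\hate_i)\to 0$ almost surely. The main obstacle is to upgrade this one-dimensional, almost-sure statement to the $L^2$-sublinearity \eqref{E:2.16} valid for every $x\in\Z^d$. My plan is to use the uniform $L^2$-bound on the single-edge increments $\chi(\cdot,\hate_i)$ furnished by ellipticity to pass from the ergodic theorem to $L^2$-convergence for $n^{-1}\chi(\omega,n\hate_i)$ (via a uniform-integrability/dominated-convergence argument on the Cesaro averages), and then to handle a general $x$ by decomposing it along a polygonal axis-aligned path of total length $O(|x|)$, combining the directional estimates through the triangle inequality in $L^2$.
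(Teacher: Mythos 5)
Your construction of $\psi$ and the verification of (1)--(3) follow the standard projection argument that the paper itself defers to (Kozlov, Papanicolaou--Varadhan; Biskup's review), so that part is fine and matches the intended route. The genuine gap is in (4), in the sentence ``Birkhoff's ergodic theorem yields $n^{-1}\chi(\omega,n\hate_i)\to 0$ almost surely.'' Birkhoff's theorem applied to the \emph{single} shift $\tau_{\hate_i}$ identifies the limit of the Ces\`aro averages as the conditional expectation of $\chi(\cdot,\hate_i)$ given the $\tau_{\hate_i}$-invariant $\sigma$-algebra, not as $\E\bigl(\chi(\cdot,\hate_i)\bigr)$. The hypothesis is ergodicity with respect to the full group of shifts of $\Z^d$; an individual coordinate shift need not be ergodic, so its invariant $\sigma$-algebra may be nontrivial, and the mean-zero property you derive from orthogonality to constants does not force the limit to vanish. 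The correct identification uses more of the structure you already have: the $i$-th component of $\nabla\chi^{(k)}$ is an $L^2$-limit of potential gradients $g\circ\tau_{\hate_i}-g$ with $g\in L^\infty(\BbbP)$, for which the Ces\`aro sums telescope, $\frac1n\sum_{m=0}^{n-1}\bigl(g\circ\tau_{(m+1)\hate_i}-g\circ\tau_{m\hate_i}\bigr)=\frac1n\bigl(g\circ\tau_{n\hate_i}-g\bigr)\to 0$ in $L^2$; since the averaging operators are $L^2$-contractions, convergence to $0$ passes to the closure, i.e.\ to $\chi(\cdot,\hate_i)$. This is the von Neumann (Mean) Ergodic Theorem route the paper points to, and it delivers the $L^2$ statement directly.

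It also makes your second questionable step unnecessary: upgrading almost-sure convergence to $L^2$ convergence from a uniform $L^2$ bound on the single-edge increments is not valid in general (a sequence bounded in $L^2$ and converging a.s.\ need not converge in norm); one would need the ergodic maximal inequality to dominate the squares, and $\chi(\cdot,\hate_i)$ is not bounded, only square integrable, at this stage of the argument. Once the directional $L^2$ statement is in place, your reduction of general $x$ via an axis-aligned path and the triangle inequality is correct, provided you phrase the directional estimate uniformly --- subadditivity of $n\mapsto\bigl(\E|\chi(\cdot,n\hate_i)|^2\bigr)^{1/2}$ gives a bound $\epsilon n+C_\epsilon$ valid for all $n$ --- so that coordinates of $x$ that remain bounded contribute only $O(1)$.
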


\begin{proofsect}{Proof}
Properties (1-3) are standard and follow directly from the construction of~$\psi$ (which is done, essentially, by showing that a minimizing sequence in \eqref{E:1.5} converges in a suitable $L^2$-sense; see, e.g., Biskup~\cite[Section~3.2]{Biskup-review} for a recent account of this). As to (4), a moment's thought reveals that it suffices to show this for~$x$ of the form $n\hate_i$, where $n\to\pm\infty$. This follows from the Mean Ergodic Theorem, similarly as in \cite[Lemma~4.8]{Biskup-review}.
\end{proofsect}

The replacement of (the gradients of) $\Psi_\Lambda$ by $\psi$ necessitates developing means to quantify the resulting error. For this we introduce an $L^p$-norm on functions $f\colon\Omega\times(\Lambda\cup\partial\Lambda)\to\R^d$ by the usual formula
\begin{equation}
%\label{}
\Vert\nabla f\Vert_{\Lambda,p}:=\biggl(\,\frac1{|\Lambda|}\sum_{\langle x,y\rangle\in\B(\Lambda)}\E\bigl|f(\omega,y)-f(\omega,x)\bigr|^p\biggr)^{\ffrac 1p}.
\end{equation}
Analogously, we also introduce a norm on functions $\varphi\colon\Omega\times\Z^d\to\R^d$ by
\begin{equation}
\label{E:2.19}
\Vert\nabla\varphi\Vert_p:=\Bigl(\,\sum_{x=\hate_1,\dots,\hate_d}\E\bigl|\varphi(\omega,x)-\varphi(\omega,0)\bigr|^p\Bigr)^{\ffrac1p}.
\end{equation}
Here we introduced the symbol $\nabla f$ for an $\R^d$-valued function whose $i$-th component at~$x$ is given by $\nabla_if(x):=f(x+\hate_i)-f(x)$ --- abusing our earlier use of this notation.
It is reasonably well known, albeit perhaps not written down explicitly anywhere, that the gradients of $\Psi_\Lambda$ and~$\psi$ are close in $\Vert\cdot\Vert_{\Lambda,2}$-norm (see, however, Proposition~3.1 of Caputo and Ioffe~\cite{Caputo-Ioffe} for a torus version of this statement).

\begin{proposition}
\label{prop-approx-corrector-L2}
Suppose the law~$\BbbP$ on conductances $\{a_{xy}\}$ is ergodic with respect to shifts of~$\Z^d$ and obeys \eqref{E:1.3} for some $\lambda\in(0,1)$. Then
\begin{equation}
%\label{}
\bigr\Vert\nabla(\Psi_{\Lambda_L}-\psi)\bigr\Vert_{\Lambda_L,2}\,\,\underset{L\to\infty}\longrightarrow\,\,0.
\end{equation}
\end{proposition}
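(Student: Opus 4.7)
\begin{proofsect}{Proof plan}
The plan is to reduce the claim to the Dirichlet variational principle and then construct an explicit boundary-layer extension of the corrector. Set $h(\omega,x):=\Psi_{\Lambda_L}(\omega,x)-\psi(\omega,x)$, viewed componentwise. By \eqref{eqn-harmonic-coordinate-finite} and part~(1) of Proposition~\ref{prop-corrector}, $h$ is $\LL_\omega$-harmonic in $\Lambda_L$, and on the boundary one has $h(\omega,x)=x-\psi(\omega,x)=-\chi(\omega,x)$. Applying the Dirichlet principle to each component separately, pointwise in $\omega$, identifies $h$ as the minimizer of $f\mapsto \sum_{\langle x,y\rangle\in\B(\Lambda_L)}a_{xy}(\omega)\,|f(y)-f(x)|^2$ among all $\R^d$-valued extensions $f$ of $-\chi(\omega,\cdot)|_{\partial\Lambda_L}$. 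Combined with uniform ellipticity from \eqref{E:1.3}, this yields the master bound
\begin{equation}
\label{E:myDir}
\bigl\Vert\nabla h\bigr\Vert_{\Lambda_L,2}^{\,2}\,\le\,\frac{1}{\lambda\,|\Lambda_L|}\,\E\biggl[\sum_{\langle x,y\rangle\in\B(\Lambda_L)}a_{xy}(\omega)\,\bigl|g(\omega,y)-g(\omega,x)\bigr|^2\biggr]
\end{equation}
for any extension $g\colon\Omega\times(\Lambda_L\cup\partial\Lambda_L)\to\R^d$ of $-\chi$.

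Next I would build a cheap extension by localizing $\chi$ near $\partial\Lambda_L$. Fix a scale $\ell_L$ with $1\ll\ell_L\ll L$ (to be adjusted at the end), and let $\phi_L\colon\Z^d\to[0,1]$ be a deterministic cutoff equal to $1$ on $\partial\Lambda_L$, equal to $0$ on points of $\Lambda_L$ at lattice distance at least $\ell_L$ from $\partial\Lambda_L$, with $|\phi_L(y)-\phi_L(x)|\le C/\ell_L$ along every nearest-neighbor edge. Set $g(\omega,x):=-\chi(\omega,x)\,\phi_L(x)$. The discrete product rule $\chi(y)\phi_L(y)-\chi(x)\phi_L(x)=\chi(y)[\phi_L(y)-\phi_L(x)]+\phi_L(x)[\chi(y)-\chi(x)]$ and $a_{xy}\le 1/\lambda$ split the right side of \eqref{E:myDir} into a ``$\nabla\phi_L$ piece'' weighted by $\E|\chi(\omega,y)|^2$ and a ``$\nabla\chi$ piece'' weighted by $\phi_L(x)^2$. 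For the first, I would use part~(4) through the function $\eta(r):=\sup_{|x|\ge r}\E|\chi(\omega,x)|^2/|x|^2$, which is finite (by square integrability and the standard path bound) and tends to zero as $r\to\infty$. Since $|\nabla\phi_L|^2\le C/\ell_L^2$ is supported on a slab of $O(\ell_L L^{d-1})$ vertices, and since every $y$ therein with $|y|\ge\ell_L$ satisfies $\E|\chi(\omega,y)|^2\le\eta(\ell_L)\,d\,L^2$ (with the finitely many small-$|y|$ vertices contributing a negligible $O((\ell_L/L)^d)$), this piece is of order $\eta(\ell_L)L/\ell_L$ after dividing by $|\Lambda_L|$. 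For the second, shift covariance (part~(2)) reduces $\E|\chi(\omega,x+\hate_i)-\chi(\omega,x)|^2$ to the finite constant $\E|\chi(\omega,\hate_i)|^2$ (using part~(3) together with ellipticity); summing over the $O(\ell_L L^{d-1})$ edges in $\supp\phi_L$ and normalizing gives a contribution of order $\ell_L/L$. Altogether,
\begin{equation}
\label{E:mybalance}
\Vert\nabla h\Vert_{\Lambda_L,2}^{\,2}\,\le\,C\Bigl[\,\eta(\ell_L)\,\frac{L}{\ell_L}+\frac{\ell_L}{L}\,\Bigr]+o(1).
\end{equation}

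It remains to choose $\ell_L$ so that both terms in \eqref{E:mybalance} vanish, i.e., to arrange $\ell_L\to\infty$, $\ell_L/L\to 0$, and $\eta(\ell_L)\,L/\ell_L\to 0$ simultaneously. Such a sequence exists by a standard diagonal extraction, since $\eta(r)\to 0$. I expect this scale-matching to be the only delicate point in the argument: because Proposition~\ref{prop-corrector}(4) furnishes qualitative but not quantitative sublinearity of the corrector, no explicit $\ell_L$ can be written down in advance, and the cutoff width must be simultaneously large enough for the sublinearity of $\chi$ to kick in and small enough that the boundary slab carries negligible volume fraction. All other ingredients --- Dirichlet principle, discrete product rule, shift covariance, and uniform ellipticity --- are routine; notably, this $L^2$ statement does not require the i.i.d.\ hypothesis or the Meyers estimate.
\end{proofsect}
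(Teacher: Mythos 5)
Your argument is correct, and it takes a genuinely different route from the paper's. Both proofs start identically: $h:=\Psi_{\Lambda_L}-\psi$ is $\LL_\omega$-harmonic with boundary data $\mp\chi$, so the Dirichlet principle plus uniform ellipticity reduces everything to bounding the homogeneous Dirichlet energy of \emph{some} extension of $\chi|_{\partial\Lambda_L}$, normalized by $|\Lambda_L|$. The paper then takes the \emph{harmonic} extension and invokes Lemma~\ref{lemma-4.1}, which expresses its energy as $\frac12\sum_{x,y\in\partial\Lambda_L}K(x,y)|\chi(y)-\chi(x)|^2$ with $K$ the boundary kernel of the simple random walk; feeding in $\E|\chi(x)-\chi(y)|^2\le A(\varepsilon)+\varepsilon|x-y|^2$ and the identities $\sum_yK(x,y)\le1$, $\sum_{x,y}K(x,y)|x-y|^2\le|\B(\Lambda_L)|$ gives the bound $(A|\partial\Lambda_L|+\varepsilon|\B(\Lambda_L)|)/|\Lambda_L|$ with no scale to tune. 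You instead take the cutoff extension $-\chi\,\phi_L$ supported in a boundary layer of width $\ell_L$, and the product rule produces the two competing terms $\eta(\ell_L)L/\ell_L$ and $\ell_L/L$, forcing a diagonal choice of $\ell_L$ because the sublinearity in Proposition~\ref{prop-corrector}(4) is only qualitative. Your version is self-contained (it bypasses Lemma~\ref{lemma-4.1} entirely) and more robust to general domains, at the modest cost of the scale-matching step; the paper's version trades the cutoff for the kernel representation and thereby avoids introducing any intermediate scale. Each ingredient you use is justified correctly — in particular the finiteness of $\eta$ via shift covariance and the path/Minkowski bound, the $O((\ell_L/L)^d)$ handling of the corner vertices with $|y|<\ell_L$, and the observation that neither the i.i.d.\ hypothesis nor the Meyers estimate is needed — so I see no gap.
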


As we will elaborate on later (see Remark~\ref{R:representation}), this is exactly what is needed to establish the representation \eqref{E:1.5} for the limit value $c_\eff(t)$ of the sequence $L^{-d}C_L^{\,\eff}(t)$. However, in order to validate the conditions \twoeqref{E:2.4a}{E:2.5a} of the Martingale Central Limit Theorem, more than just square integrability is required. For this we state and prove:

\begin{proposition}[Meyers' estimate]
\label{prop:Meyers estimate}
Suppose~$\BbbP$ is ergodic with respect to shifts. For each $d\ge1$, there is $\lambda=\lambda(d)\in(0,1)$ such that if \eqref{E:1.3} holds $\BbbP$-a.s.\ with this~$\lambda$, then for some $p>4$,
\begin{equation}
\label{eqn-approx-corrector-Lp-1}
\Vert\nabla\psi\Vert_p<\infty
\end{equation}
and
\begin{equation}
\label{eqn-approx-corrector-Lp-2}
\sup_{L\ge1}\,\bigr\Vert\nabla(\Psi_{\Lambda_L}-\psi)\bigr\Vert_{\Lambda_L,p}<\infty.
\end{equation}
\end{proposition}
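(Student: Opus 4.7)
\noindent
The plan is to adapt Meyers' classical self-improvement argument to this discrete random-environment setting. Set $\bar a := \tfrac12(\lambda+\lambda^{-1})$ and $\tilde a_{xy}:=a_{xy}-\bar a$, so that $\|\tilde a\|_\infty \le \delta(\lambda):=\tfrac12(\lambda^{-1}-\lambda)$, with $\delta(\lambda)\to 0$ as $\lambda\uparrow 1$. Writing $\Delta_0$ for the standard discrete Laplacian on $\Z^d$ and $\nabla^{\ast}$ for minus the discrete divergence, the operator splits as $\LL_\omega=\bar a\,\Delta_0+\nabla^{\ast}(\tilde a\,\nabla\cdot)$. Since $\Delta_0\, x=0$, the corrector $\chi:=\psi-x$ satisfies
\[
-\bar a\,\Delta_0\chi \;=\; \nabla^{\ast}(\tilde a\,\nabla x)\,+\,\nabla^{\ast}(\tilde a\,\nabla\chi),
\]
and the finite-volume analogue $\Phi_L:=\Psi_{\Lambda_L}-x$ satisfies the same equation in $\Lambda_L$ with zero Dirichlet data, with the Dirichlet Laplacian $\Delta_L$ in place of $\Delta_0$.

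\smallskip
\noindent
The main analytic input is the $L^p$-boundedness of the Calder\'on--Zygmund operator $T_0:=\nabla(-\Delta_0)^{-1}\nabla^{\ast}$ acting on the stationary space underlying \eqref{E:2.19}, and of its finite-volume counterpart $T_L:=\nabla(-\Delta_L)^{-1}\nabla^{\ast}$ on $\Lambda_L$ in the norm $\|\cdot\|_{\Lambda_L,p}$ uniformly in $L$, with a common operator-norm bound $C_p(d)$. For $T_0$ this is a Fourier computation on $\Z^d$: the symbol of $T_0$ is a bounded Mikhlin-type multiplier, Plancherel gives $C_2(d)=1$, and Riesz--Thorin interpolation makes $p\mapsto C_p(d)$ continuous at $p=2$, so $C_p(d)\downarrow 1$ as $p\downarrow 2$. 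For $T_L$ one extends functions by zero to all of $\Z^d$, applies $T_0$, and controls the resulting boundary commutator via a discrete Caccioppoli estimate at $\partial\Lambda_L$; alternatively, and more robustly, one runs Gehring's reverse-H\"older lemma, which bypasses singular integrals entirely and produces a Meyers exponent $p=p(d,\lambda)>2$ directly.

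\smallskip
\noindent
Given these CZ bounds, applying $T_0$ (resp.\ $T_L$) to the displayed equation converts it into the fixed-point identity
\[
\nabla\chi \,+\, \bar a^{-1}\,T_0\bigl(\tilde a\,\nabla\chi\bigr) \;=\; -\bar a^{-1}\,T_0\bigl(\tilde a\,\nabla x\bigr),
\]
and analogously for $\nabla\Phi_L$. First choose any $p>4$; then pick $\lambda=\lambda(d)\in(0,1)$ close enough to $1$ that $\bar a^{-1}\delta(\lambda)\,C_p(d)<\tfrac12$. The Neumann series converges in the relevant $L^p$ space and yields
\[
\|\nabla\chi\|_p \,+\, \sup_{L\ge 1}\,\|\nabla\Phi_L\|_{\Lambda_L,p} \;<\; \infty.
\]
This is \eqref{eqn-approx-corrector-Lp-1}, and \eqref{eqn-approx-corrector-Lp-2} follows from the triangle inequality $\|\nabla(\Psi_{\Lambda_L}-\psi)\|_{\Lambda_L,p}\le\|\nabla\Phi_L\|_{\Lambda_L,p}+\|\nabla\chi\|_{\Lambda_L,p}$ combined with the stationarity estimate $\|\nabla\chi\|_{\Lambda_L,p}\le c_d\,\|\nabla\chi\|_p$. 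The principal obstacle is the uniform-in-$L$ boundedness of $T_L$: the cube is not smooth, naive reflection arguments fail at corners, and either a delicate analysis of mixed second differences of the discrete Dirichlet Green's function or a Gehring-type detour is needed. The smallness of $\delta(\lambda)$ is ultimately what allows the boundary contribution to be absorbed into the Neumann series, which is why the result is tied to a small ellipticity contrast rather than merely uniform ellipticity.
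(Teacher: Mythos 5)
Your overall architecture is the right one and matches the paper's: write $\LL_\omega$ as a perturbation of a constant-coefficient Laplacian, turn the corrector equation into a fixed-point identity for the gradient involving the operator $\nabla(-\Delta)^{-1}\nabla^\star$, and close a Neumann series in $\ell^p$ using the smallness of the ellipticity contrast against the $L^p$ operator norm. But the proposal has two genuine gaps, and they sit exactly at the two places where all the work is.

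First, the uniform-in-$L$ bound $\sup_L\Vert\KK_{\Lambda_L}\Vert_p<\infty$ for $\KK_{\Lambda_L}:=\nabla(-\Delta)^{-1}_{\Lambda_L}\nabla^\star$ is not an ``input'' one can cite: it is the main content of the proof (Theorem~\ref{Lpbound-K}, which occupies most of Section~\ref{sec-Meyers}). Neither of your two suggestions delivers it. Extending by zero and ``controlling the boundary commutator via Caccioppoli'' is not an argument: the Dirichlet inverse $(-\Delta)^{-1}_{\Lambda_L}$ is not a localization of $(-\Delta)^{-1}_{\Z^d}$, and the discrepancy is itself a boundary singular integral that must be estimated in $\ell^p$ uniformly in $L$ --- which is the original problem. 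The Gehring route is viable in principle, but to reach $p>4$ you would need a boundary reverse-H\"older inequality on the cube whose self-improvement exponent is quantified in $\lambda$ and shown to exceed $4$ as $\lambda\uparrow1$; none of that is done. Moreover, your dismissal of reflection is misplaced: for an axis-parallel box the reflections in distinct coordinates commute, the Dirichlet Green function is an alternating sum of whole-lattice Green functions over iterated reflections, and this is precisely how the paper obtains the kernel bound $|\nabla^{(2)}_i\nabla^{(1)}_j\nabla^{(2)}_kG_{\Lambda_L}(x,y)|\le C|x-y|^{-(d+1)}$ that feeds a Calder\'on--Zygmund decomposition, a weak-$(1,1)$ estimate, and Marcinkiewicz interpolation against the $\ell^2$ bound.

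Second, the infinite-volume estimate \eqref{eqn-approx-corrector-Lp-1}. Your fixed-point identity for $\nabla\chi$ cannot be run in $\ell^p(\Z^d)$, since the source $\tilde a\,\nabla x$ is bounded but not $p$-summable; so ``$T_0$ acting on the stationary space underlying \eqref{E:2.19}'' must mean the horizontal operator on $L^p(\BbbP)$, and its boundedness for $p\ne2$ is not a Plancherel/Fourier computation --- it requires a transference argument (this is the delicate part of Naddaf--Spencer). Relatedly, Riesz--Thorin from the single endpoint $p=2$ gives nothing; you need a second endpoint, which is again CZ theory. The paper avoids all of this by deducing \eqref{eqn-approx-corrector-Lp-1} from the uniform finite-volume bounds: the $L^2$ convergence of Proposition~\ref{prop-approx-corrector-L2} plus a Chebyshev/tail argument transfers $\sup_L\Vert\nabla\chi_{\Lambda_L}\Vert_{\Lambda_L,p^\ast}<\infty$ into $\Vert\nabla\chi\Vert_p<\infty$ for every $p<p^\ast$. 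I would recommend adopting that limiting step rather than trying to justify an honest $L^p(\BbbP)$ Calder\'on--Zygmund theory on $\Omega$.
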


Proposition~\ref{prop:Meyers estimate} is the sole reason for our restriction on ellipticity contrast. We believe that, on the basis of the technology put forward in Gloria and Otto~\cite{Gloria-Otto}, no such restriction should be needed. To attest this we note that versions of the above bounds actually hold pointwise for a.e.~$\omega\in\Omega$ satisfying \eqref{E:1.3}; i.e., for norms without the expectation~$\E$. In addition, from \cite[Proposition~2.1]{Gloria-Otto} we in fact know \eqref{eqn-approx-corrector-Lp-1} for all $p\in(1,\infty)$ when $d\ge3$. \textit{Update in revised version}: We note that \eqref{eqn-approx-corrector-Lp-1} is now known in all $d\ge2$ from Gloria, Neukamm and Otto~\cite{GNO}. Unfortunately, this does not apply to \eqref{eqn-approx-corrector-Lp-2}.
A torus version of Proposition~\ref{prop:Meyers estimate} appeared in Theorem~4.1 of Caputo and Ioffe~\cite{Caputo-Ioffe}.

\subsection{Perturbed corrector and variance formula}
\label{sec2.5}\noindent
Unfortunately, a direct attempt at the substitution of (the gradients of) $\Psi_\Lambda$ by $\psi$ in \eqref{E:2.10} reveals another technical obstacle: As \eqref{E:2.10} relies on the Fundamental Theorem of Calculus, the replacement of $\Psi_\Lambda$ by~$\psi$ requires the latter function to be defined for~$\omega$ that may lie outside of the support of~$\BbbP$. This is a problem because $\psi$ is generally determined by conditions (1-4) in Proposition~\ref{prop-corrector} only on a set of full~$\BbbP$-measure. Imposing additional assumptions on~$\BbbP$ --- namely, that the single-conductance distribution is supported on an interval with a bounded and non-vanishing density --- would allow us to replace the Lebesgue integral in \eqref{E:2.10} by an integral with respect to $\BbbP(\textd\tilde\omega_k)$ and thus eliminate this problem. Notwithstanding, we can do much better by invoking a rank-one perturbation argument which we describe next. 

Fix an index $i\in\{1,\dots,d\}$ and recall the notation $\nabla_if(x):=f(x+\hate_i)-f(x)$. For a vertex $x\in\Z^d$ and a finite set $\Lambda\subset\Z^d$ satisfying $x\in\Lambda$ or $x+\hate_i\in\Lambda$, let  $\gg_\Lambda^{(i)}(\omega,x)$ be defined by 
\begin{equation}
%\label{}
\gg_{\Lambda}^{(i)}(\omega,x)^{-1}:=\inf\bigl\{Q_{\Lambda}(f)\colon 0\le f\le1,\,f(x+\hate_i)-f(x)=1,\,f_{\partial\Lambda}=0\bigr\}.
\end{equation}
Obviously, $\gg_{\Lambda}^{(i)}$ is continuous in $\omega_b$ for $b:=\langle x,x+\hate_i\rangle$ and it is bounded away from~$0$ and infinity. (In Section~\ref{sec-perturbed-coordinate} we will see that $\gg_{\Lambda}^{(i)}$ is in fact a double gradient of the Green function for operator~$\LL_\omega$ on~$\Lambda$.) 
Note that \eqref{E:2.10} and \eqref{E:2.4} ask us to understand how $\nabla_i\Psi_\Lambda(\omega,x)$ changes when the coordinate of $\omega$ over $\langle x,x+\hate_i\rangle$ is perturbed. This change takes a multiplicative form:

\begin{proposition}[Rank-one perturbation]
\label{prop-perturbed-corrector}
Let $\Lambda\subset\Z^d$ be finite and $x,y\in\Lambda$ be nearest neighbors; $y=x+\hate_i$ for some $i\in\{1,\dots,d\}$. For any $\omega,\omega'$ that agree everywhere except at edge $b:=\langle x,y\rangle$,
\begin{equation}
\label{E:2.21}
\nabla_i\Psi_\Lambda(\omega',x)=
%-\epsilon\hate_i\gg_{\Lambda}^{(i)}(\omega',x)+
\bigl[1-(\omega_b'-\omega_b)\gg_{\Lambda}^{(i)}(\omega',x)\bigr]\nabla_i\Psi_\Lambda(\omega,x).
\end{equation}
For the prefactor we get the alternative expressions
\begin{equation}
\label{E:2.24b}
\begin{aligned}
1-(\omega_b'-\omega_b)\gg_{\Lambda}^{(i)}(\omega',x)
=\bigl(1+(\omega_b'-\omega_b)\gg_{\Lambda}^{(i)}(\omega,x)\bigr)^{-1}
=\frac{\gg_{\Lambda}^{(i)}(\omega',x)}{\gg_{\Lambda}^{(i)}(\omega,x)}.
\end{aligned}
\end{equation}
In particular, the factor $1-(\omega_b'-\omega_b)\gg_{\Lambda}^{(i)}(\omega',x)$ is bounded away from 0 and $\infty$ uniformly in $\omega\in\Omega$ and $\Lambda\subset\Z^d$.
\end{proposition}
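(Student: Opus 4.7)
The plan is to exploit that the perturbation $\LL_{\omega'}-\LL_\omega$ has rank one, and then solve the resulting Dirichlet problem explicitly in terms of the Green function of $\LL_{\omega'}$ on~$\Lambda$. To that end, set $u(\cdot):=\Psi_\Lambda(\omega,\cdot)$ and $u'(\cdot):=\Psi_\Lambda(\omega',\cdot)$; both agree on~$\partial\Lambda$, so their difference $v:=u-u'$ vanishes there. Since $\omega$ and $\omega'$ differ only at the edge $b=\langle x,y\rangle$, direct inspection of \eqref{E:2.8a} gives, for any~$f$,
\begin{equation*}
(\LL_{\omega'}-\LL_\omega)f \,=\, (\omega_b'-\omega_b)\,\nabla_i f(x)\,(\1_x-\1_y),
\end{equation*}
and combining this with $\LL_\omega u=0$ on~$\Lambda$ yields $\LL_{\omega'} v = \delta\,\nabla_i u(x)\,(\1_x-\1_y)$ on~$\Lambda$, where $\delta:=\omega_b'-\omega_b$.

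Next, I would let $G^{\omega'}_\Lambda(\cdot,\cdot)$ denote the (symmetric, positive) Dirichlet Green function of~$\LL_{\omega'}$ on~$\Lambda$, so that $v(z) = -\sum_u G^{\omega'}_\Lambda(z,u)\,\LL_{\omega'}v(u)$. Substituting the expression for $\LL_{\omega'} v$ above and computing the $i$-th discrete gradient at~$x$, I obtain
\begin{equation*}
\nabla_i u(x)-\nabla_i u'(x) \,=\, \delta\,\nabla_i u(x)\,\bigl[G^{\omega'}_\Lambda(x,x)+G^{\omega'}_\Lambda(y,y)-2G^{\omega'}_\Lambda(x,y)\bigr].
\end{equation*}
The step I expect to take the most care is then identifying the bracket as $\gg_\Lambda^{(i)}(\omega',x)$. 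For this, the constrained minimization defining $\gg^{(i)}$ is a quadratic program whose Euler--Lagrange equations show the minimizer is $f_\star=\mu\bigl[G^{\omega'}_\Lambda(\cdot,x)-G^{\omega'}_\Lambda(\cdot,y)\bigr]$, with the multiplier $\mu$ fixed by $f_\star(y)-f_\star(x)=1$. Green's identity $Q_\Lambda(f_\star)=-\sum_u f_\star(u)\,\LL_{\omega'}f_\star(u)$ evaluates the minimum to the reciprocal of the bracket, giving $\gg_\Lambda^{(i)}(\omega',x) = G^{\omega'}_\Lambda(x,x)+G^{\omega'}_\Lambda(y,y)-2G^{\omega'}_\Lambda(x,y)$ and hence \eqref{E:2.21}.

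For the alternative expressions \eqref{E:2.24b}, swapping the roles of $\omega$ and $\omega'$ in \eqref{E:2.21} (equivalently, sending $\delta\mapsto-\delta$) gives $\nabla_i u(x) = \nabla_i u'(x)\,\bigl[1+\delta\,\gg^{(i)}_\Lambda(\omega,x)\bigr]$. Multiplying this against \eqref{E:2.21} forces the two bracketed factors to be mutual reciprocals, which is the first equality in \eqref{E:2.24b}; expanding the product and solving yields $\gg^{(i)}_\Lambda(\omega,x)-\gg^{(i)}_\Lambda(\omega',x) = \delta\,\gg^{(i)}_\Lambda(\omega,x)\gg^{(i)}_\Lambda(\omega',x)$, from which the ratio identity follows by division by $\gg^{(i)}_\Lambda(\omega,x)$. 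The uniform bound on the prefactor is then immediate: the ellipticity assumption \eqref{E:1.3} plugged into the variational definition of $\gg^{(i)}_\Lambda$ shows that $\gg^{(i)}_\Lambda(\omega,x)$ is bounded between two positive constants depending only on $d$ and~$\lambda$, so the ratio $\gg^{(i)}_\Lambda(\omega',x)/\gg^{(i)}_\Lambda(\omega,x)$ stays bounded away from~$0$ and~$\infty$ uniformly in~$\omega$ and~$\Lambda$.
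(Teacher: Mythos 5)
Your derivation of \eqref{E:2.21} is correct and is essentially the paper's own argument. The paper's Lemma~\ref{lemma-5.1} encodes the same rank-one computation (there one writes down the candidate for $\Psi_\Lambda(\omega',\cdot)$ and verifies $\LL_{\omega'}$-harmonicity, whereas you solve for $v=u-u'$ by inverting $\LL_{\omega'}$ with its Dirichlet Green function; these are the same manipulation), and your identification of $\gg_\Lambda^{(i)}(\omega',x)$ with $G^{\omega'}_\Lambda(x,x)+G^{\omega'}_\Lambda(y,y)-2G^{\omega'}_\Lambda(x,y)$ via the Euler--Lagrange equation and Green's identity is exactly the paper's proof of \eqref{E:5.9}. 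The uniform two-sided bound on $\gg^{(i)}_\Lambda$ by comparison of $Q_\Lambda$ with the homogeneous Dirichlet form is also fine.

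The one step that does not quite close is your route to the first equality in \eqref{E:2.24b}: you multiply \eqref{E:2.21} by its $\omega\leftrightarrow\omega'$ counterpart and cancel $\nabla_i\Psi_\Lambda(\omega,x)\,\nabla_i\Psi_\Lambda(\omega',x)$, which requires $\nabla_i\Psi_\Lambda(\omega,x)\ne0$. This is not guaranteed for a general configuration, and it cannot be restored by perturbing $\omega_b$ alone, since \eqref{E:2.21} itself shows that if the gradient over~$b$ vanishes for one value of $\omega_b$ it vanishes for all of them. Note that \eqref{E:2.24b} is a statement about the Green functions only, so it should be proved independently of~$\Psi_\Lambda$. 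The clean way --- and what the paper does via the rank-one identity \eqref{E:5.8rr} for $\LL_{\omega'}-\LL_\omega$ --- is to apply the Sherman--Morrison/resolvent formula to $G_\Lambda$ itself; taking the double gradient at $(x,x)$ yields
\begin{equation*}
\gg^{(i)}_\Lambda(\omega,x)-\gg^{(i)}_\Lambda(\omega',x)
=(\omega_b'-\omega_b)\,\gg^{(i)}_\Lambda(\omega,x)\,\gg^{(i)}_\Lambda(\omega',x),
\end{equation*}
which is algebraically equivalent to both equalities in \eqref{E:2.24b}. With that substitution your proof is complete.
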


It is worthy a note that \eqref{E:2.21} is a direct consequence of a rank-one perturbation formula for the Green function; cf Lemma~\ref{lemma-5.1}. Incidentally, such formulas have proved extremely useful in the analysis of random Schr\"odinger operators, including those associated with operator~$\cmss L_\omega$ (Aizenman and Molchanov~\cite{Aizenman-Molchanov}). What matters for us is the $\Lambda\uparrow\Z^d$-limit of the expression in \eqref{E:2.24b} can be controlled uniformly in $\omega\in\Omega$:

\begin{proposition}
\label{prop-2.8}
Recall that $\Omega:=[\lambda,1/\lambda]^{\B(\Z^d)}$ for some~$\lambda\in(0,1)$. Then $\Lambda\mapsto\gg^{(i)}_\Lambda(\omega,x)$ is non-decreasing and bounded away from zero and infinity uniformly in $\Lambda\subset\Z^d$ and~$\omega\in\Omega$. Moreover, for all $\omega\in\Omega$ and all $x\in\Z^d$ the limit
\begin{equation}
\label{E:2.22}
\gg^{(i)}(\omega,x):=\lim_{\Lambda\uparrow\Z^d}\gg^{(i)}_\Lambda(\omega,x)
\end{equation}
exists and satisfies
\begin{equation}
\label{E:2.25a}
\gg^{(i)}(\omega,x)^{-1}=\inf\bigl\{Q_{\Z^d}(f)\colon 0\le f\le 1,\,f(x+\hate_i)-f(x)=1,\,|\supp(f)|<\infty\bigr\},
\end{equation}
where $\supp(f):=\{x\in\Z^d\colon f(x)\ne0\}$. In particular, $\omega\mapsto\gg^{(i)}(\omega,x)$ is monotone and continuous in the product topology on~$\Omega$. 
Finally, $\omega_b\mapsto\gg^{(i)}(\omega,x)$ for $b:=\langle x,x+\hate_i\rangle$ is continuous and bounded and $(\omega,x)\mapsto \gg^{(i)}(\omega,x)$ is stationary in the sense that $\gg^{(i)}(\tau_z\omega,x+z)=\gg^{(i)}(\omega,x)$ holds for all $\omega\in\Omega$ and all $x,z\in\Z^d$.
\end{proposition}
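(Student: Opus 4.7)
My plan is to build $\gg^{(i)}(\omega,x)$ as the monotone increasing limit of $\gg^{(i)}_\Lambda(\omega,x)$ along any exhaustion of $\Z^d$, identify this limit by the variational formula~\eqref{E:2.25a}, and then read off the remaining assertions from these two characterizations together with the rank-one perturbation formulas of Proposition~\ref{prop-perturbed-corrector}. Monotonicity in~$\Lambda$ is obtained by extension by zero: if $\Lambda\subset\Lambda'$, any admissible $f$ for the $\Lambda$-problem extends to an admissible $f$ for the $\Lambda'$-problem of the same energy, because every edge in $\B(\Lambda')\setminus\B(\Lambda)$ lies between two vertices outside~$\Lambda$, where the extension vanishes; hence $\gg^{(i)}_{\Lambda'}(\omega,x)^{-1}\le \gg^{(i)}_{\Lambda}(\omega,x)^{-1}$. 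The gradient constraint forces the single-edge contribution $a_{\langle x,x+\hate_i\rangle}(\omega)\ge\lambda$ to enter $Q_\Lambda(f)$, so $\gg^{(i)}_\Lambda(\omega,x)^{-1}\ge\lambda$, while the one-vertex test function~$\1_{\{x+\hate_i\}}$ (or $\1_{\{x\}}$ when $x\in\Lambda$) has energy at most $2d/\lambda$; hence $\gg^{(i)}_\Lambda(\omega,x)\in[\lambda/(2d),\,1/\lambda]$ uniformly in $\omega$ and~$\Lambda$, and the limit~\eqref{E:2.22} exists in this interval.

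For the variational formula, the ``$\le$'' direction follows since any finitely supported admissible $f$ in~\eqref{E:2.25a} is admissible for the $\Lambda$-problem as soon as $\Lambda\supset\supp(f)\cup\partial\supp(f)$ and satisfies $Q_\Lambda(f)=Q_{\Z^d}(f)$; passing to the limit and taking infimum over~$f$ yields $\gg^{(i)}(\omega,x)^{-1}\le{}$(RHS). The ``$\ge$'' direction uses that the minimizer $f_\Lambda$ of the $\Lambda$-problem, extended by zero, is admissible in~\eqref{E:2.25a} with $Q_{\Z^d}(f_\Lambda)=\gg^{(i)}_\Lambda(\omega,x)^{-1}$, so (RHS)$\,\le\gg^{(i)}_\Lambda(\omega,x)^{-1}$ for every~$\Lambda$, and hence also for the limit. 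Monotonicity in each~$\omega_e$ is then immediate from the linearity of $Q_{\Z^d}(f)$ in each~$\omega_e$ for every fixed admissible~$f$. Stationarity follows from the bijection $f\mapsto f\circ\tau_{-z}$ between admissible functions at $(x,\omega)$ and $(x+z,\tau_z\omega)$, which preserves the Dirichlet energy by definition of the shift action on conductances.

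For continuity in the product topology, each $\gg^{(i)}_\Lambda(\omega,x)$ depends on only the finitely many coordinates $\{\omega_e\colon e\in\B(\Lambda)\}$ and is jointly continuous in them, so $\gg^{(i)}=\sup_\Lambda \gg^{(i)}_\Lambda$ is lower-semicontinuous on the compact space~$\Omega$; dually, \eqref{E:2.25a} writes $1/\gg^{(i)}$ as an infimum of continuous functions (each $Q_{\Z^d}(f)$ depends only on the finitely many edges adjacent to $\supp(f)$), making $1/\gg^{(i)}$ upper-semicontinuous and hence $\gg^{(i)}$ again lower-semicontinuous. To upgrade to actual joint continuity I would extend the rank-one perturbation identity (Proposition~\ref{prop-perturbed-corrector} and Lemma~\ref{lemma-5.1}) from the special edge $b=\langle x,x+\hate_i\rangle$ to an arbitrary single edge: combined with uniform ellipticity, this yields a Lipschitz modulus in each coordinate of $\omega$ uniform in the others, which together with the LSC gives continuity. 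For the special edge~$b$ itself, passing~\eqref{E:2.24b} to $\Lambda\uparrow\Z^d$ (justified by the monotone convergence and the uniform positive lower bound on $\gg^{(i)}_\Lambda$) yields the explicit M\"obius identity
\begin{equation*}
\gg^{(i)}(\omega',x)=\frac{\gg^{(i)}(\omega,x)}{1+(\omega_b'-\omega_b)\,\gg^{(i)}(\omega,x)}
\end{equation*}
for $\omega,\omega'$ agreeing outside~$b$, from which the continuity and boundedness of $\omega_b\mapsto\gg^{(i)}(\omega,x)$ follow at once.

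The step I expect to be most delicate is precisely this upgrade from one-sided semicontinuity to actual joint continuity on~$\Omega$. Both natural handles on~$\gg^{(i)}$---as a supremum of continuous functions (via finite~$\Lambda$'s) and as the reciprocal of an infimum of continuous functions (via the $\Z^d$ variational formula)---deliver only lower-semicontinuity, and no obvious matching sup-representation is available. The remedy has to come from the additional rigidity supplied by the rank-one perturbation structure together with the ellipticity bound, which control how much $\gg^{(i)}$ can change when $\omega$ is perturbed on a single edge. Everything else---monotonicity, variational comparison, and shift invariance---is standard in character.
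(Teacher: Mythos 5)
Most of your argument is sound and, for the bulk of the claims, more explicit than the paper's own (very terse) proof, which routes everything through the Green-function identity $\gg_\Lambda^{(i)}(\omega,x)=\nabla_i^{(1)}\nabla_i^{(2)}G_\Lambda(x,x;\omega)$ from the proof of Proposition~\ref{prop-perturbed-corrector} and comparison with the homogeneous walk. Your direct variational treatment of the monotonicity in~$\Lambda$, the uniform two-sided bounds, the identification \eqref{E:2.25a}, the monotonicity in~$\omega$, the stationarity, and the continuity in the single coordinate~$\omega_b$ via the M\"obius identity obtained by passing \eqref{E:2.24b} to the limit are all correct.

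The genuine gap is exactly where you flagged it: the upgrade to continuity in the product topology. The implication you propose --- lower semicontinuity plus a Lipschitz modulus in each coordinate of~$\omega$, uniform in the others --- is false. Counterexample: $F(\omega):=\sup_{e\in\B(\Z^d)}\omega_e$ on $[\lambda,1/\lambda]^{\B(\Z^d)}$ is a supremum of coordinate projections (hence lower semicontinuous and monotone) and is $1$-Lipschitz in each coordinate uniformly in the others, yet it is discontinuous in the product topology at the constant configuration $\omega\equiv\lambda$. What product-topology continuity actually requires is that the \emph{total} influence of all edges outside a large finite box be small, and a per-edge Lipschitz bound with a constant that is merely uniform over edges cannot deliver that; you would need the single-edge Lipschitz constants to be \emph{summable} over edges (e.g., via a deterministic decay of $|\nabla^{(1)}\nabla^{(2)}G^\omega(x,e)|$ in $|x-e|$ fed into the rank-one formula), which is a nontrivial quantitative input you have not supplied.

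A workable soft alternative is to prove upper semicontinuity directly and combine it with your LSC: if $\omega^{(n)}\to\omega$ coordinatewise, near-minimizers $f_n$ of \eqref{E:2.25a} for $\omega^{(n)}$ have homogeneous Dirichlet energy at most $2d\lambda^{-2}$ by ellipticity, so along a subsequence they converge pointwise (and weakly in the Dirichlet seminorm) to some $f_\infty$ with $0\le f_\infty\le1$ and $f_\infty(x+\hate_i)-f_\infty(x)=1$ lying in the Dirichlet closure of the finitely supported functions; edgewise convergence of $a_e(\omega^{(n)})|\nabla f_n(e)|^2$ and Fatou give $Q^\omega_{\Z^d}(f_\infty)\le\liminf_n\gg^{(i)}(\omega^{(n)},x)^{-1}$, and a standard truncation/renormalization shows the infimum in \eqref{E:2.25a} is not changed by passing to this closure, whence $\gg^{(i)}(\omega,x)^{-1}\le\liminf_n\gg^{(i)}(\omega^{(n)},x)^{-1}$. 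Some such argument must be supplied: the product-topology continuity is not decorative, since it is precisely what the Dini step in the proof of Proposition~\ref{prop-MCLT1} rests on.
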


Before we wrap up the outline of the proof of Theorem~\ref{thm1}, let us formulate a representation for the limiting variance $\sigma_t^2$ from Theorem~\ref{thm1}: For $x\in\Z^d$ and $i\in\{1,\dots,d\}$, let $b$ denote the edge corresponding to the pair $(x,i)$ and let
\begin{equation}
\label{eqn-hdef}
h(\omega,x,i):=\int\BbbP(\textd\omega_b')\int_{\omega_b'}^{\omega_b}
\textd\tilde\omega_b\,\,\bigl[1-(\tilde\omega_b-\omega_b)\gg^{(i)}(\tilde\omega,x)\bigr]^2,
\end{equation}
where $\tilde\omega$ is the configuration equal to $\omega$ except at $b$, where it equals $\tilde\omega_b$. 
 Define the matrix $\hat Z(x,i):=\{\hat Z_{jk}(x,i)\}_{j,k=1,\dots,d}$ by the quadratic form
\begin{equation}
\label{E:2.27}
\bigl(t,\hat Z(x,i) t\bigr):=\E\Bigl(h(\cdot,x,i)\bigl|\nabla_i(t\cdot\psi)(\cdot,x)\bigr|^2\,\Big|\,\sigma\bigl(\omega_{b'}\colon b'\preceq (x,i)\bigr)\Bigr),
\end{equation}
where $(x,i)$ represents the edge $\langle x,x+\hate_i\rangle$ and $t\in\R^d$. Then we have:

\begin{theorem}[Limiting variance]
\label{thm2}
Under the assumptions of Theorem~\ref{thm1}, the matrix elements of $\hat Z(x,i)$ are square integrable. In particular, $\sigma_t^2$ from Theorem~\ref{thm1} is given by
\begin{equation}
\label{eqn-sigmadef}
\sigma_t^2 = \sum_{i=1}^d \,\E\Bigl(\bigl(t,\hat Z(0,i) t\bigr)^2\Bigr),\qquad t\in\R^d.
\end{equation}
\end{theorem}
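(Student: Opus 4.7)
The plan is to extract $\sigma_t^2$ from the Martingale CLT condition~\eqref{E:2.4a}: compute $\lim_{L\to\infty}\frac{1}{|\Lambda_L|}\sum_k\E(Z_k^2\mid\scrF_{k-1})$ in closed form and check that it equals~\eqref{eqn-sigmadef}. I proceed in three stages: (i) rewrite $Z_k$ via the rank-one perturbation formula of Proposition~\ref{prop-perturbed-corrector}; (ii) replace the finite-volume quantities $\Psi_{\Lambda_L}$ and $\gg_{\Lambda_L}^{(i)}$ by their infinite-volume counterparts $\psi$ and $\gg^{(i)}$; (iii) apply the Spatial Ergodic Theorem to the resulting stationary expression.

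For stage~(i), by~\eqref{E:2.4} the partial derivative inside the integrand of~\eqref{E:2.10} equals $|t\cdot\nabla_i\Psi_{\Lambda_L}(\tilde\omega,x)|^2$, with $(x,i)$ the $k$-th edge and $\tilde\omega$ the running configuration whose $k$-th coordinate is the dummy variable~$s$. Applying~\eqref{E:2.21} with reference configuration $\omega^*$ that differs from $\tilde\omega$ only at the $k$-th edge (where $\omega^*$ carries the true value $\omega_k$) factors this derivative as $\bigl[1-(s-\omega_k)\gg_{\Lambda_L}^{(i)}(\tilde\omega,x)\bigr]^2\,\bigl|t\cdot\nabla_i\Psi_{\Lambda_L}(\omega^*,x)\bigr|^2$, the second factor being independent of~$s$. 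For stage~(ii), Propositions~\ref{prop-approx-corrector-L2} and~\ref{prop:Meyers estimate} allow replacing $\Psi_{\Lambda_L}$ by $\psi$ with $L^p$-controlled error for some $p>4$; Proposition~\ref{prop-2.8} allows replacing $\gg_{\Lambda_L}^{(i)}$ by $\gg^{(i)}$ uniformly on $\Omega$; and the remaining integration over the future primed coordinates becomes, asymptotically, the conditional expectation given $\scrF_{(x,i)}:=\sigma(\omega_{b'}\colon b'\preceq(x,i))$. The outcome is
\begin{equation*}
Z_k\,\approx\,\int\BbbP(d\omega_b')\int_{\omega_b'}^{\omega_b}ds\,\E\Bigl(\bigl[1-(s-\omega_b)\gg^{(i)}(\tilde\omega,x)\bigr]^2\,\bigl|\nabla_i(t\cdot\psi)(\omega,x)\bigr|^2\,\Bigm|\,\scrF_{(x,i)}\Bigr),
\end{equation*}
and moving the $(\omega_b',s)$-integrals through the conditional expectation by Fubini and recognizing the definitions~\eqref{eqn-hdef} and~\eqref{E:2.27}, the right-hand side equals $(t,\hat Z(x,i)\,t)$.

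The square-integrability of the entries of $\hat Z(x,i)$ follows directly: $h$ is uniformly bounded by Proposition~\ref{prop-2.8} (since $\gg^{(i)}$ is bounded in $(0,\infty)$), and $|\nabla_i(t\cdot\psi)|\in L^p$ for some $p>4$ by Proposition~\ref{prop:Meyers estimate}; thus $h\,|\nabla_i(t\cdot\psi)|^2\in L^{p/2}\subset L^2$, and Jensen's inequality propagates this integrability through the conditional expectation. In particular $(t,\hat Z(0,i)\,t)\in L^{p/2}\subset L^4$. For stage~(iii), using $\E(Z_k^2\mid\scrF_{k-1})\approx\E((t,\hat Z(x_k,i_k)\,t)^2\mid\scrF_{k-1})$ and the decomposition
\begin{equation*}
\frac{1}{|\Lambda_L|}\sum_k\E\bigl(Z_k^2\bigm|\scrF_{k-1}\bigr)\,\approx\,\frac{1}{|\Lambda_L|}\sum_k\bigl(t,\hat Z(x_k,i_k)\,t\bigr)^2\,-\,\frac{M_L}{|\Lambda_L|},
\end{equation*}
with $M_L:=\sum_k\bigl[(t,\hat Z(x_k,i_k)\,t)^2-\E((t,\hat Z(x_k,i_k)\,t)^2\mid\scrF_{k-1})\bigr]$ a martingale, the limit reduces to two standard inputs: the remainder $M_L/|\Lambda_L|\to 0$ in $L^2$ via $\E(M_L^2)\le\sum_k\E((t,\hat Z(x_k,i_k)\,t)^4)=O(|\Lambda_L|)$, and the leading sum converges almost surely to $\sum_{i=1}^d\E((t,\hat Z(0,i)\,t)^2)$ by the Spatial Ergodic Theorem, invoking the shift-covariance of $\psi$ from Proposition~\ref{prop-corrector}(2) and of $\gg^{(i)}$ from Proposition~\ref{prop-2.8}.

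The main obstacle is stage~(ii): the replacement $\Psi_{\Lambda_L}\to\psi$ and $\gg_{\Lambda_L}^{(i)}\to\gg^{(i)}$ inside the integrand for $Z_k$, controlling the summed error to be $o(|\Lambda_L|)$. Bounding the $\Psi_{\Lambda_L}\to\psi$ contribution requires combining the $L^2$-vanishing from Proposition~\ref{prop-approx-corrector-L2} with the uniform $L^p$-bound from Proposition~\ref{prop:Meyers estimate} via H\"older; the Meyers integrability margin $p>4$ is essential both for the product $h\,|\nabla_i(t\cdot\psi)|^2$ to sit in $L^2$ and for the summed replacement errors to be subleading. The passage from the partial average over future primed coordinates in $\B(\Lambda_L)$ to the full infinite-volume conditional expectation is handled by the uniform boundedness and monotone convergence of $\gg_{\Lambda_L}^{(i)}\to\gg^{(i)}$ in Proposition~\ref{prop-2.8}; boundary contributions near $\partial\Lambda_L$ generate only $O(L^{d-1})$ terms, negligible after normalization by $L^d$.
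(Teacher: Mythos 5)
Your stages (i) and (ii) reproduce the paper's own route (Proposition~\ref{prop-MCLT1}): the representation $Z_k=\E(A_k\mid\scrF_k)$ with $A_k=h_{\Lambda_L}(\cdot,x_k,i_k)\,|\nabla_{i_k}(t\cdot\Psi_{\Lambda_L})(\cdot,x_k)|^2$, the replacement of $A_k$ by $B_k=h(\cdot,x_k,i_k)\,|\nabla_{i_k}(t\cdot\psi)(\cdot,x_k)|^2$ controlled by interpolating (Lemma~\ref{lemma:p_p'}) between the $L^2$-convergence of Proposition~\ref{prop-approx-corrector-L2} and the uniform $L^p$-bound of Proposition~\ref{prop:Meyers estimate}, and the Dini/monotonicity argument for $\gg_{\Lambda_L}^{(i)}\to\gg^{(i)}$ together with the $O(NL^{d-1})$ count of near-boundary edges. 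The square-integrability of the entries of $\hat Z(x,i)$ is also argued exactly as in the paper ($h$ bounded, $\nabla\psi\in L^p$ for some $p>4$, Jensen); that part only needs $p/2>2$ and is fine.

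The gap is in stage (iii). You assert $(t,\hat Z(0,i)t)\in L^{p/2}\subset L^4$, but on a probability space $L^{p/2}\subseteq L^4$ requires $p\ge 8$, whereas Proposition~\ref{prop:Meyers estimate} only guarantees \emph{some} $p>4$, possibly arbitrarily close to~$4$. Consequently the key bound $\E(M_L^2)\le\sum_k\E\bigl((t,\hat Z(x_k,i_k)t)^4\bigr)=O(|\Lambda_L|)$ is not available: a fourth moment of $(t,\hat Z t)$ amounts to an eighth moment of $\nabla\psi$, which is beyond the integrability furnished by the Meyers estimate. The martingale decomposition that forces this requirement is avoidable. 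The paper instead observes that the conditional expectations $\E\bigl((t,\hat Z(x,i)t)^2\mid\sigma(\omega_b\colon b\prec(x,i))\bigr)$ themselves form a family that is stationary under shifts (by Lemma~\ref{lemma-2.3}, Proposition~\ref{prop-corrector}(2) and the stationarity of $\gg^{(i)}$) and bounded in $L^1(\BbbP)$ (this needs only $(t,\hat Z t)\in L^2$, i.e.\ $p>4$), and applies the Spatial Ergodic Theorem directly to that family; the limit is $\sum_{i=1}^d\E\bigl((t,\hat Z(0,i)t)^2\bigr)$ by the tower property. Replacing your stage (iii) with this observation closes the gap and the rest of your argument goes through.
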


As an inspection of \eqref{E:2.27} reveals, the limiting variance is thus a bi-quadratic form in $t$. Although concisely written, the expression is not very useful from the practical point of view; particularly, due to the unwieldy conditioning in~\eqref{E:2.27}. Notwithstanding, a slightly more explicit form can be obtained by performing the inner integral in \eqref{eqn-hdef} with the help of \eqref{E:2.24b}:
\begin{equation}
%\label{}
h(\omega,x,i)=\int\BbbP(\textd\omega_b')(\omega_b-\omega_b')\frac{\gg^{(i)}(\omega',x)}{\gg^{(i)}(\omega,x)},
\end{equation}
where, as before, $\omega'$ agrees with~$\omega$ except at~$b:=\langle x,x+\hate_i\rangle$. Note that this implies, at least formally, that
\begin{equation}
\label{E:2.31ww}
h(\omega,x,i)\bigl|\nabla\psi(\omega,x)\bigr|^2 = \int\BbbP(\textd\omega_b')(\omega_b-\omega_b')\bigl|\nabla\psi(\omega,x)\bigr|\,\bigl|\nabla\psi(\omega',x)\bigr|,
\end{equation}
and so
\begin{equation}
%\label{}
\E\Bigl((t,\hat Z(x,i)t)\,\Big|\,\sigma\bigl(\omega_{b'}\colon b'\prec (x,i)\bigr)\Bigr)
=0,
\end{equation}
i.e., that $(t,\hat Z(x,i)t)$ is a martingale increment. A question of interest is whether an expression can be found for $\sigma_t^2$ that is more amenable to computations.

\begin{remark}
Since $t\mapsto C_L^{\eff}(t)$ is quadratic in~$t$, and thus linear in $\{t_it_j\colon 1\le i\le j\le d\}$, Theorem~\ref{thm2} and (a version of) the Cram\'er-Wold device imply that, as $L\to\infty$, the joint law of
\begin{equation}
%\label{}
\biggl\{\frac{C_L^{\,\eff}(t)-\E C_L^{\,\eff}(t)}{|\Lambda_L|^{1/2}}\colon t\in\R^d\biggr\}
\end{equation}
tends to a multivariate Gaussian $\{G_t\colon t\in\R^d\}$ with
\begin{equation}
%\label{}
E(G_t)=0\quad\text{and}\quad E\bigl(G_tG_s)=\sum_{i=1}^d \,\E\Bigl(\bigl(t,\hat Z(0,i) t\bigr)\bigl(s,\hat Z(0,i) s\bigr)\Bigr),
\end{equation}
where $\hat Z(0,i)$ is as in \eqref{E:2.27}. Naturally, $t\mapsto G_t$ is a quadratic form as well.
\end{remark}

\subsection{Organization}
The proofs (and the rest of the paper) are organized as follows. In Section~\ref{sec3} we assemble the ingredients --- following the steps outlined in the present section --- into the proofs of Theorems~\ref{thm1} and~\ref{thm2} and Corollary~\ref{cor1.2}. In Section~\ref{sec-Meyers} we then show that the finite-volume harmonic coordinate approximates its full lattice counterpart in an $L^2$-sense as stated in Proposition~\ref{prop-approx-corrector-L2} and establish the Meyers estimate from Proposition~\ref{prop:Meyers estimate}. A key technical tool is the Calder\'on-Zygmund regularity theory and a uniform bound on the triple gradient of the Green function of the simple random walk in finite boxes. Finally, in Section~\ref{sec-perturbed-coordinate}, we prove Propositions~\ref{prop-perturbed-corrector} and \ref{prop-2.8} dealing with the harmonic coordinate over environments perturbed at a single edge.

%%%%%%%%%%%%%%%%%%%%%%%%%%%%%
\section{Proof of the CLT}
\label{sec3}\noindent
%%%%%%%%%%%%%%%%%%%%%%%%%%%%%%%%%%%%%%%%%%
In this section we verify the conditions \twoeqref{E:2.4a}{E:2.5a} of the Martingale Central Limit Theorem and thus prove Theorems~\ref{thm1} and~\ref{thm2}. All derivations are conditional on Propositions~\ref{prop-approx-corrector-L2}--\ref{prop-2.8} the proofs of which are postponed to later sections. Throughout we will make use of the following simple but useful consequence of H\"older's inequality:

\begin{lemma}
\label{lemma:p_p'}
For any $p'>p>2$, $\alpha:=\frac2p\frac{p'-p}{p'-2}$ and $\beta:=\frac {p'}p\frac{p-2}{p'-2}$,
\begin{equation}
\bigr\Vert\nabla(\Psi_{\Lambda_L}-\psi)\bigr\Vert_{\Lambda_L,p}
\leq\bigr\Vert\nabla(\Psi_{\Lambda_L}-\psi)\bigr\Vert_{\Lambda_L,2}^\alpha\,
\bigr\Vert\nabla(\Psi_{\Lambda_L}-\psi)\bigr\Vert_{\Lambda_L,p'}^\beta.
\end{equation}
\end{lemma}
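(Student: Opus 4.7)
\smallskip
\noindent
\textit{Proof proposal.} This is a direct consequence of the log-convexity of $L^p$-norms, which itself follows from H\"older's inequality applied in the product measure space underlying $\Vert\cdot\Vert_{\Lambda_L,p}$. Concretely, I would introduce the measure $\mu$ on the set of pairs $(\omega,e)$, where $e\in\B(\Lambda_L)$, defined as the product of $\BbbP$ on $\Omega$ with $|\Lambda_L|^{-1}$ times the counting measure on $\B(\Lambda_L)$. For a function $f\colon\Omega\times(\Lambda_L\cup\partial\Lambda_L)\to\R^d$ set $g(\omega,e):=|f(\omega,y)-f(\omega,x)|$ for $e=\langle x,y\rangle$. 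Then $\Vert\nabla f\Vert_{\Lambda_L,q}=\Vert g\Vert_{L^q(\mu)}$ for every $q\ge1$, and the lemma reduces to the standard interpolation inequality $\Vert g\Vert_{L^p(\mu)}\le\Vert g\Vert_{L^2(\mu)}^{\alpha}\Vert g\Vert_{L^{p'}(\mu)}^{\beta}$.

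To establish this interpolation bound I would choose $a,b\ge 0$ with $a+b=p$ so that writing $|g|^p=|g|^a\cdot|g|^b$ and applying H\"older with exponents $r=2/a$ and $s=p'/b$ gives
\begin{equation*}
\int |g|^p\,\textd\mu\,\le\,\Bigl(\int |g|^2\,\textd\mu\Bigr)^{a/2}\Bigl(\int |g|^{p'}\,\textd\mu\Bigr)^{b/p'}.
\end{equation*}
The conjugacy condition $\frac{a}{2}+\frac{b}{p'}=1$ together with $a+b=p$ is a linear system with the unique solution
\begin{equation*}
a=\frac{2(p'-p)}{p'-2},\qquad b=\frac{p'(p-2)}{p'-2}.
\end{equation*}
Taking the $p$-th root and noting that $a/p=\alpha$ and $b/p=\beta$ yields the claim.

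There is essentially no obstacle here; the only point to check is that the exponents $\alpha$ and $\beta$ displayed in the statement agree with those produced by H\"older, which is a short algebraic verification using $\tfrac1p=\tfrac{\alpha}{2}+\tfrac{\beta}{p'}$ and $\alpha+\beta=1$. Since $p'>p>2$, both $a$ and $b$ are positive and the exponents $r,s$ are in $(1,\infty)$, so H\"older's inequality applies without issue.
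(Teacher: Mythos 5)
Your proposal is correct and is exactly the paper's argument: the paper's proof is the one-line instruction ``Apply H\"older's inequality to the function $f:=|\nabla(\Psi_{\Lambda_L}-\psi)|$,'' and you have simply written out that application (with the product measure and the exponent bookkeeping, which checks out: $a/p=\alpha$, $b/p=\beta$, $\alpha+\beta=1$). No discrepancies.
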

\begin{proofsect}{Proof}
Apply H\"older's inequality to the function $f:=|\nabla(\Psi_{\Lambda_L}-\psi)|$.
\end{proofsect}

%We will need this simple estimate for both conditions in the martingale CLT. 
Assume now the setting developed in Section~\ref{sec2}; in particular, the ordering of edges and sigma-algebras $\scrF_k$ from Section~\ref{sec2.2} and the martingale increment~$Z_k$ from~\eqref{E:2.3} and its representation \eqref{E:2.10} from Section~\ref{sec2.3}. In analogy with equation \eqref{eqn-hdef}, we also define 
\begin{equation}
\label{eqn-hlambdadef}
h_\Lambda(\omega,x,i):=\int\BbbP(\textd\omega_b')\int_{\omega_b'}^{\omega_b}
\textd\tilde\omega_b\bigl[1-(\tilde\omega_b-\omega_b)\gg_\Lambda^{(i)}(\tilde\omega,x)\bigr]^2,
\end{equation}
where $b:=\langle x,x+\hate_i\rangle$ and $\tilde\omega$ is the configuration equal to $\omega$ except at $b$, where it equals $\tilde\omega_b$. By Proposition~\ref{prop-perturbed-corrector}, we may write the martingale increment $Z_k$ as
\begin{equation}
\label{E:3.2a}
Z_k=\E\Bigl(h_{\Lambda_L}(\cdot,x_k,i_k)\bigl|\nabla_{i_k}(t\cdot\Psi_{\Lambda_L})(\cdot,x_k)\bigr|^2\,\Big|\FF_{k}\Bigr),
\end{equation}
where $x_k$ and $i_k$ are the vertex and the edge direction corresponding to $b_k$, i.e., $b_k=\langle x_k,x_k+\hate_{i_k}\rangle$. (A representation similar to \eqref{E:2.31ww} is possible here as well.) Recall the notation for $\hat Z(x,i)$ from \eqref{E:2.27} and note that this is well defined and finite $\BbbP$-a.s.\ thanks to the estimates \twoeqref{eqn-approx-corrector-Lp-1}{eqn-approx-corrector-Lp-2} as well as boundedness of~$h$. Note the dependence of $Z_k$ on $L$.

%\subsection{Martingale CLT first condition}
%Let us show condition i) of the martingale CLT is satisfied:

\begin{proposition}[Martingale CLT --- first condition]
\label{prop-MCLT1}
Assume that the premises (and thus conclusions) of Propositions~\ref{prop-approx-corrector-L2}--\ref{prop-2.8} hold. Then $Z_k\in L^2(\BbbP)$ for all~$k$ and
\begin{equation}
\label{E:3.3}
\frac1{|\Lambda_L|}\sum_{k=1}^{|\B(\Lambda_L)|}
\E(Z_k^2|\FF_{k-1})\,\,\underset{L\to\infty}\longrightarrow\,\,\sum_{i=1}^d \E\Bigl(\bigl(t,\hat Z(0,i) t\bigr)^2\Bigr)
\end{equation}
in~$\BbbP$-probability and $L^1(\BbbP)$.
\end{proposition}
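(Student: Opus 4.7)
The plan is to replace each $Z_k$ by its infinite-volume, stationary counterpart and then invoke the Spatial Ergodic Theorem. Writing $(x_k,i_k)$ for $b_k$, set
\begin{equation*}
\hat Z_k:=\E\Bigl(h(\cdot,x_k,i_k)\bigl|\nabla_{i_k}(t\cdot\psi)(\cdot,x_k)\bigr|^2\,\Big|\,\FF_k\Bigr);
\end{equation*}
since $\FF_k=\sigma(\omega_{b'}\colon b'\preceq(x_k,i_k))$, definition \eqref{E:2.27} gives $\hat Z_k=(t,\hat Z(x_k,i_k)t)$. Lemma~\ref{lemma-2.3} (shift invariance of $\preceq$), Proposition~\ref{prop-corrector}(2) (shift covariance of $\psi$) and Proposition~\ref{prop-2.8} (stationarity of $\gg^{(i)}$) together imply that $(x,i)\mapsto(t,\hat Z(x,i)t)$ is a stationary random field; its $L^2(\BbbP)$-integrability comes from boundedness of $h$ and Proposition~\ref{prop:Meyers estimate}. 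The Spatial Ergodic Theorem then yields
\begin{equation*}
\frac1{|\Lambda_L|}\sum_{k=1}^N\E(\hat Z_k^2\,|\,\FF_{k-1})\,\underset{L\to\infty}\longrightarrow\,\sum_{i=1}^d\E\bigl((t,\hat Z(0,i)t)^2\bigr)
\end{equation*}
in $L^1(\BbbP)$.

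\textbf{Key approximation.} It remains to establish
\begin{equation}
\label{eq:plan-error}
\frac1{|\Lambda_L|}\sum_{k=1}^N\E|Z_k-\hat Z_k|^2\,\underset{L\to\infty}\longrightarrow\,0.
\end{equation}
Since both $Z_k$ and $\hat Z_k$ are $\FF_k$-conditional expectations, by $L^2$-contractivity of $\E(\,\cdot\,|\FF_k)$ it suffices to control the average $L^2$-norm of
\begin{equation*}
(h_{\Lambda_L}-h)\bigl|\nabla_{i_k}(t\cdot\Psi_{\Lambda_L})\bigr|^2+h\bigl(|\nabla_{i_k}(t\cdot\Psi_{\Lambda_L})|^2-|\nabla_{i_k}(t\cdot\psi)|^2\bigr).
\end{equation*}
For the second summand, the identity $|a|^2-|b|^2=(a-b)(a+b)$ and Cauchy-Schwarz reduce matters to $\Vert\nabla(\Psi_{\Lambda_L}-\psi)\Vert_{\Lambda_L,4}\to0$, which follows by interpolating between Proposition~\ref{prop-approx-corrector-L2} and the Meyers bound \eqref{eqn-approx-corrector-Lp-2} at exponent $p'>4$ via Lemma~\ref{lemma:p_p'}. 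For the first summand, the uniform boundedness of $h_{\Lambda_L}-h$ combined with H\"older's inequality separates the two factors; the Meyers estimate controls the gradient factor uniformly in $L$, while the monotone convergence $\gg_{\Lambda_L}^{(i)}(\omega,x)\uparrow\gg^{(i)}(\omega,x)$ (Proposition~\ref{prop-2.8}), together with the translation identity $\gg^{(i)}_{\Lambda_L}(\omega,x)=\gg^{(i)}_{\Lambda_L-x}(\tau_x\omega,0)$, gives $\E|h_{\Lambda_L}-h|(\cdot,x,i)\to0$ uniformly over the bulk $\{x\colon\dist(x,\partial\Lambda_L)\ge L^{1/2}\}$; the $O(L^{d-1/2})$ boundary layer is negligible against $|\Lambda_L|=L^d$.

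\textbf{Combining.} Given \eqref{eq:plan-error}, the bound $|Z_k^2-\hat Z_k^2|\le|Z_k-\hat Z_k|(|Z_k|+|\hat Z_k|)$ and two applications of Cauchy-Schwarz yield
\begin{equation*}
\frac1{|\Lambda_L|}\sum_k\E\bigl|\E(Z_k^2-\hat Z_k^2\,|\,\FF_{k-1})\bigr|\le\Bigl(\tfrac1{|\Lambda_L|}\sum_k\E|Z_k-\hat Z_k|^2\Bigr)^{1/2}\Bigl(\tfrac1{|\Lambda_L|}\sum_k\E(|Z_k|+|\hat Z_k|)^2\Bigr)^{1/2},
\end{equation*}
where the first factor vanishes by \eqref{eq:plan-error} and the second is bounded uniformly in $L$ by the Meyers estimate. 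Merging this with the ergodic-theorem step proves \eqref{E:3.3} in $L^1(\BbbP)$, and hence in probability.

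\textbf{Main obstacle.} The decisive difficulty is \eqref{eq:plan-error}: Proposition~\ref{prop-approx-corrector-L2} supplies only an $L^2$-statement for $\nabla(\Psi_{\Lambda_L}-\psi)$, whereas the squaring inside $Z_k$ effectively demands $L^4$. The Meyers estimate at some $p>4$, combined with the interpolation of Lemma~\ref{lemma:p_p'}, is precisely what bridges this gap, and this is the single place where the hypothesis of small ellipticity contrast is consumed. Managing $h_{\Lambda_L}\to h$ with its bulk--boundary dichotomy is a secondary but still essential technical ingredient.
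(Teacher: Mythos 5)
Your proposal is correct and follows essentially the same route as the paper: replace $Z_k$ by the stationary surrogate $(t,\hat Z(x_k,i_k)t)$, apply the Spatial Ergodic Theorem, and control the $L^2$-error of $A_k-B_k$ by splitting off the $h_{\Lambda_L}-h$ part (handled via the monotone convergence of $\gg_\Lambda^{(i)}$ with a bulk/boundary dichotomy) from the gradient part (handled via Meyers plus the interpolation Lemma~\ref{lemma:p_p'} to upgrade the $L^2$-convergence of $\nabla(\Psi_{\Lambda_L}-\psi)$ to $L^4$). The only cosmetic differences are your $L^{1/2}$-wide boundary layer in place of the paper's fixed width $N$ followed by $\epsilon\downarrow0$, and your reliance on dominated convergence in expectation where the paper invokes Dini's theorem for uniformity in $\omega$; both work.
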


\begin{proofsect}{Proof}
Fix~$t\in\R^d$. Thanks to Lemma~\ref{lemma-2.3} and Proposition~\ref{prop-corrector}(2), for each~$i\in\{1,\dots,d\}$, the collection of conditional expectations
\begin{equation}
%\label{}
\biggl\{\E\Bigl(\,\bigl(\,t,\hat Z(x,i)t\,\bigr)^2\,\Big|\,\sigma\bigl(\omega_b\colon b\prec(x,i)\bigr)\,\Bigr)\colon x\in\Z^d\biggr\}
\end{equation}
is stationary with respect to the shifts on~$\Z^d$ and, by Proposition~\ref{prop:Meyers estimate}, bounded in~$L^1(\BbbP)$. Labeling the edges in~$\B(\Lambda_L)$ according to the order $\preceq$, the Spatial Ergodic Theorem yields
\begin{equation}
%\label{}
\frac1{|\Lambda_L|}\sum_{k=1}^{|\B(\Lambda_L)|}
\E\bigl((t,\hat Z(x_k,i_k)t)^2\big|\FF_{k-1}\bigr)\,\,\underset{L\to\infty}\longrightarrow\,\,\sum_{i=1}^d \E\Bigl(\bigl(t,\hat Z(0,i) t\bigr)^2\Bigr)
\end{equation}
with the limit $\BbbP$-a.s.\ and in $L^1(\BbbP)$. To see how this relates to our claim, abbreviate
\begin{align}
A_k &:= h_{\Lambda_L}(\cdot,x_k,i_k)\bigl|\nabla_{i_k}(t\cdot\Psi_{\Lambda_L})(\cdot,x_k)\bigr|^2,
\label{E:3.6}
\\
B_k &:= h(\cdot,x_k,i_k)\bigl|\nabla_{i_k}(t\cdot\psi)(\cdot,x_k)\bigr|^2,
\label{E:3.7}
\end{align}
and denote
\begin{align}
R_{L,k}:=\E\Big[\E\big[A_k\,\big|\FF_{k}\big]^2- \E\big[B_k\,\big|\FF_{k}\big]^2 \Big|\FF_{k-1}\Big].
\end{align}
By \eqref{E:3.2a} we have $Z_k=\E(A_k\,\big|\FF_{k})$, while \eqref{E:2.27} reads $(t,\hat Z(x_k,i_k)t)=\E(B_k\,\big|\FF_{k})$. Hence, as soon as we show that
\begin{equation}
\label{E:3.9}
\frac1{|\Lambda_L|}\sum_{k=1}^{|\B(\Lambda_L)|}
\E\bigl(|R_{L,k}|\bigr)\,\,\underset{L\to\infty}\longrightarrow\,\,0,
\end{equation}
the claim \eqref{E:3.3} will follow.

The proof of \eqref{E:3.9} will proceed by estimating $\E|R_{L,k}|$ which will involve applications of the Cauchy-Schwarz inequality (in order to separate terms) and Jensen's inequality (in order to eliminate conditional expectations). First we note
\begin{equation}
%\label{}
\E|R_{L,k}|
\le\bigl(\E\bigl[(A_k-B_k)^2\bigr]\bigr)^{\ffrac12}\bigl(\E\bigl[(A_k+B_k)^2\bigr]\bigr)^{\ffrac12}.
\end{equation}
Writing $A_k=B_k+(A_k-B_k)$ and noting $(a+b)^2\le 2a^2+2b^2$ tells us
\begin{equation}
%\label{}
\E\bigl[(A_k+B_k)^2\bigr]
\le2\E\bigl[(A_k-B_k)^2\bigr]+8\E\bigl(B_k^2\bigr).
\end{equation}
Summing over $k$ and applying the Cauchy-Schwarz inequality, we find that
\begin{equation}
\label{E:3.13}
\frac1{|\Lambda_L|}\sum_{k=1}^{|\B(\Lambda_L)|}
\E\bigl(|R_{L,k}|\bigr)
\le\sqrt{\alpha\bigl(2\alpha+8\beta\bigr)},
\end{equation}
where
\begin{equation}
%\label{}
\alpha:=\frac1{|\Lambda_L|}\sum_{k=1}^{|\B(\Lambda_L)|}\E\bigl[(A_k-B_k)^2\bigr]\quad\text{and}\quad
\beta:=\frac1{|\Lambda_L|}\sum_{k=1}^{|\B(\Lambda_L)|}\E(B_k^2).
\end{equation}
By inspection of \eqref{E:3.13} we now observe that it suffices to show that $\beta$ stays bounded while $\alpha$ tends to zero in the limit $L\to\infty$.

The boundedness of $\beta$ follows from \eqref{eqn-approx-corrector-Lp-1} and the fact that $h(\cdot,x,i)$ is bounded; indeed, these yield $\E(|B_k|^2)\le\Vert h\Vert_\infty^2|t|^4\Vert\nabla\psi\Vert_4^4$ uniformly in~$k$ and~$L$. Concerning the terms constituting $\alpha$, using $(a+b)^2\le2a^2+2b^2$ we first separate terms as
\begin{multline}
\label{E:3.14d}
\qquad
\E\bigl[(A_k-B_k)^2\bigr]\le2\E\Bigl(\bigl|h_{\Lambda_L}(\cdot,x_k,i_k)\bigr|^2\bigl|\,|\nabla_{i_k}(t\cdot\Psi_{\Lambda_L})(\cdot,x_k)|^2-|\nabla_{i_k}(t\cdot\psi)(\cdot,x_k))|^2\bigr|^2\Bigr)
\\
+\,
2\E\Bigl(\bigl|h_{\Lambda_L}(\cdot,x_k,i_k)-h(\cdot,x_k,i_k)\bigr|^2\bigl|\nabla_{i_k}(t\cdot\psi)(\cdot,x_k)\bigr|^4\Bigr).
\qquad
\end{multline}
Since $h_\Lambda$ is uniformly bounded, the average over $k$ of the first term is bounded by a constant times the product of $(\Vert\nabla\Psi_{\Lambda_L}\Vert_{\Lambda_L,4}+\Vert\nabla\psi\Vert_{\Lambda_L,4})^2$ and $\Vert\nabla(\Psi_{\Lambda_L}-\psi)\Vert_{\Lambda_L,4}^2$. The latter tends to zero as $L\to\infty$ by Proposition~\ref{prop:Meyers estimate}, Proposition~\ref{prop-approx-corrector-L2} and Lemma~\ref{lemma:p_p'} (with the choices $p:=4$ and $p'>4$ but sufficiently close to $4$). 

For the second term in \eqref{E:3.14d} we pick $p>4$ and use H\"older's inequality to get
\begin{multline}
\label{E:3.16}
\qquad
\frac1{|\Lambda_L|}\sum_{k=1}^{|\B(\Lambda_L)|}
\E\Bigl(\bigl|h_{\Lambda_L}(\cdot,x_k,i_k)-h(\cdot,x_k,i_k)\bigr|^2\bigl|\nabla_{i_k}(t\cdot\psi)(\cdot,x_k)\bigr|^4\Bigr)
\\
\le
|t|^4\,\Vert\nabla\psi\Vert_{\Lambda_L,p}^4\biggl(\,\frac1{|\Lambda_L|}\sum_{k=1}^{|\B(\Lambda_L)|}
\E\Bigl(\bigl|h_{\Lambda_L}(\cdot,x_k,i_k)-h(\cdot,x_k,i_k)\bigr|^{2q}\Bigr)\biggr)^{\ffrac1q},
\qquad
\end{multline}
where $q$ satisfies $\ffrac4p+\ffrac1q=1$. The norm of $\Vert\nabla\psi\Vert_{\Lambda_L,p}$ is again bounded by Proposition~\ref{prop:Meyers estimate} as long as~$p$ is sufficiently close to~$4$; to apply \eqref{eqn-approx-corrector-Lp-1}, we need to invoke the stationarity of $\nabla\psi$ to realize $\Vert\nabla\psi\Vert_{\Lambda_L,p}=\Vert\nabla\psi\Vert_p$.

For the second term in \eqref{E:3.16} we first need to show that for each~$\epsilon>0$ there is $N\ge1$ so that for all $\omega\in\Omega$,
\begin{equation}
\label{E:3.16c}
\dist_{\ell^1(\Z^d)}(x,\Lambda_L^\cc)\ge N\quad\Rightarrow\quad \bigl|\,h_{\Lambda_L}(\omega,x,i)-h(\omega,x,i)\bigr|<\epsilon.
\end{equation}
For this we use that, thanks to \eqref{eqn-hdef}, \eqref{eqn-hlambdadef}, \eqref{E:1.3} and the monotonicity of $\Lambda\mapsto\gg_\Lambda^{(i)}(\tilde\omega,x)$,
\begin{equation}
\label{E:3.17c}
\bigl|\,h_\Lambda(\omega,x,i)-h(\omega,x,i)\bigr|
\le C\int_\lambda^{\ffrac1\lambda}\textd\tilde\omega_b\,\bigl(\gg^{(i)}(\tilde\omega,x)-\gg_\Lambda^{(i)}(\tilde\omega,x)\bigr)
\end{equation}
for some constant $C=C(\lambda)<\infty$. To estimate the right-hand side, we invoke stationarity with respect to shifts and note that whenever $x+\Lambda_N\subset\Lambda$, we have
\begin{equation}
\label{E:3.18c}
\gg^{(i)}(\omega,x)-\gg_\Lambda^{(i)}(\omega,x)
\le \gg^{(i)}(\tau_x\omega,0)-\gg_{\Lambda_N}^{(i)}(\tau_x\omega,0),\qquad \omega\in\Omega.
\end{equation}
Then \eqref{E:3.16c} follows from \eqref{E:3.17c} and the fact that the difference on the right-hand side of \eqref{E:3.18c} converges to zero uniformly in $\omega\in\Omega$. (Specifically, we apply Dini's theorem for uniformity: $\Omega$ is compact in the product topology by Tychonoff's theorem, $L\mapsto\gg_{\Lambda_N}^{(i)}(\cdot,0)$ is a non-decreasing sequence of continuous functions and the limit $\gg^{(i)}(\cdot,0)$ is continuous as well.)

We now bound the last term in \eqref{E:3.16} as follows. The terms for which $x_k$ is at least~$N$ steps away from $\Lambda_L$ are bounded by~$\epsilon$ thanks to \eqref{E:3.17c}; the sum over the remaining terms is of order~$NL^{d-1}$ thanks to the uniform boundedness of $h_\Lambda-h$. Hence, in the limit $L\to\infty$, the second term in \eqref{E:3.16} is of order~$\epsilon^{\ffrac1q}$; taking $\epsilon\downarrow0$ shows that~$\alpha$ tends to zero as $L\to\infty$. Invoking \eqref{E:3.13}, this finishes the proof of \eqref{E:3.9} and the whole claim.
\end{proofsect}

\begin{proposition}[Martingale CLT --- second condition] 
\label{prop-MCLT2}
Assume that the premises (and thus conclusions) of Propositions~\ref{prop-approx-corrector-L2}--\ref{prop-2.8} hold. Then for each $\epsilon>0$,
\begin{equation}
%\label{}
\frac1{|\Lambda_L|}\sum_{k=1}^{|\B(\Lambda_L)|}
\E\Bigl(\,Z_k^2\1_{\{|Z_k|>\epsilon|\Lambda_L|^{1/2}\}}\Big|\FF_{k-1}\Bigr)\,\underset{L\to\infty}\longrightarrow\,0,
\end{equation}
in $\BbbP$-probability.
\end{proposition}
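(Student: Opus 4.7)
The plan is to reduce convergence in probability to the stronger statement of $L^1$-convergence. Since each summand is non-negative, its conditional and unconditional expectations integrate to the same value, so it suffices to show
\begin{equation}
\frac{1}{|\Lambda_L|}\sum_{k=1}^{|\B(\Lambda_L)|}\E\bigl(Z_k^2\1_{\{|Z_k|>\epsilon|\Lambda_L|^{1/2}\}}\bigr)\underset{L\to\infty}\longrightarrow 0.
\end{equation}
The only ingredient required is a uniform higher-moment bound of the form $\sum_k\E(|Z_k|^r)\le C|\Lambda_L|$ for some $r>2$, after which a one-line Chebyshev argument concludes the proof. Such a higher-moment bound is precisely what the Meyers estimate (Proposition~\ref{prop:Meyers estimate}) delivers, thanks to its excess integrability $p>4$.

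To establish the higher-moment bound, I would start from the representation \eqref{E:3.2a}, apply Jensen's inequality for the convex function $x\mapsto|x|^{p/2}$ (with $p>4$ from Proposition~\ref{prop:Meyers estimate}), and use the uniform boundedness of $h_{\Lambda_L}$ (from Proposition~\ref{prop-perturbed-corrector}) to obtain
\begin{equation}
\E(|Z_k|^{p/2})\le C|t|^p\,\E\bigl(|\nabla_{i_k}\Psi_{\Lambda_L}(\cdot,x_k)|^p\bigr).
\end{equation}
Summing over $k$ turns the right-hand side into $C|t|^p|\Lambda_L|\,\|\nabla\Psi_{\Lambda_L}\|_{\Lambda_L,p}^p$, and the triangle inequality combined with Proposition~\ref{prop:Meyers estimate} and stationarity of $\psi$ (which yields $\|\nabla\psi\|_{\Lambda_L,p}\le C\|\nabla\psi\|_p<\infty$) bounds this by $C'|\Lambda_L|$ uniformly in $L$.

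The proof then concludes with a direct Chebyshev estimate: the pointwise inequality $\1_{\{|Z_k|>a\}}\le(|Z_k|/a)^{p/2-2}$, valid once $p>4$, applied with $a:=\epsilon|\Lambda_L|^{1/2}$, gives
\begin{equation}
Z_k^2\1_{\{|Z_k|>\epsilon|\Lambda_L|^{1/2}\}}\le\frac{|Z_k|^{p/2}}{\bigl(\epsilon|\Lambda_L|^{1/2}\bigr)^{p/2-2}},
\end{equation}
so that
\begin{equation}
\frac{1}{|\Lambda_L|}\sum_k\E\bigl(Z_k^2\1_{\{|Z_k|>\epsilon|\Lambda_L|^{1/2}\}}\bigr)\le\frac{C''}{\epsilon^{p/2-2}\,|\Lambda_L|^{(p-4)/4}}\underset{L\to\infty}\longrightarrow 0,
\end{equation}
since $p>4$. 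The only substantive obstacle is upstream, namely securing $p>4$ in the Meyers estimate; once that is in place, the Lindeberg-type condition is immediate.
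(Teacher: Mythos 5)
Your proposal is correct and follows essentially the same route as the paper: reduce to convergence in expectation, bound $\E(|Z_k|^{p/2})$ via the representation \eqref{E:3.2a}, Jensen's inequality and the uniform boundedness of $h_{\Lambda_L}$, invoke the Meyers estimate (Proposition~\ref{prop:Meyers estimate}) for the uniform bound on $\Vert\nabla\Psi_{\Lambda_L}\Vert_{\Lambda_L,p}^p$ with $p>4$, and conclude with the Chebyshev-type bound yielding the factor $|\Lambda_L|^{-(p-4)/4}$. No gaps.
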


\begin{proofsect}{Proof}
This could be proved by strengthening a bit the statement of Proposition~\ref{prop-MCLT1} (from squares of the $Z$'s to a slightly higher power), but a direct argument is actually easier.

First we note that it suffices to show convergence in expectation. Let $p>4$ be such that the statements in Proposition~\ref{prop:Meyers estimate} hold. By H\"older's and Chebyshev's inequalities we have
\begin{equation}
\label{E:3.19}
\E\Bigl(\,Z_k^2\1_{\{|Z_k|>\epsilon|\Lambda_L|^{1/2}\}}\Bigr)\le\Bigl(\frac1{\epsilon|\Lambda_L|^{1/2}}\Bigr)^{\frac{p-4}2}\E\bigl(\,|Z_k|^{p/2}\bigr).
\end{equation}
Since $h_{\Lambda_L}$ is bounded, Jensen's inequality yields 
\begin{equation}
%\label{}
\E\bigl(\,|Z_k|^{p/2}\bigr)
\le C\E\biggl(\Bigl[\E\Bigl(\bigl|\nabla_{i_k}(t\cdot\Psi_\Lambda)(\cdot,x_k)\bigr|^2\,\Big|\FF_{k}\Bigr)\Bigr]^{\ffrac p2}\biggr)\le C\E\Bigl(\bigl|\nabla_{i_k}(t\cdot\Psi_\Lambda)(\cdot,x_k)\bigr|^p\Bigr).
\end{equation}
It follows that
\begin{equation}
\label{E:3.22q}
\frac1{|\Lambda_L|}\sum_{k=1}^{|\B(\Lambda_L)|}\E\bigl(\,|Z_k|^{p/2}\bigr)\le C|t|^{p}\Vert\nabla\Psi_{\Lambda_L}\Vert_{\Lambda_L,p}^p.
\end{equation}
The right-hand side is bounded uniformly in~$L$. Using this in \eqref{E:3.19}, the claim follows.
\end{proofsect}

We can now finish the proof of our main results:

\begin{proofsect}{Proof of Theorems~\ref{thm1} and~\ref{thm2} from Propositions~\ref{prop-approx-corrector-L2}--\ref{prop-2.8}}
The distributional convergence in \eqref{E:2.1} is a direct consequence of the Martingale Central Limit Theorem whose conditions \twoeqref{E:2.4a}{E:2.5a} are established in Propositions~\ref{prop-MCLT1} and~\ref{prop-MCLT2}. The limiting variance $\sigma_t^2$ is given by the right-hand side of \eqref{E:3.3}, in agreement with \eqref{eqn-sigmadef}. It remains to prove that $\sigma_t^2>0$ whenever $t\ne0$ and the law~$\BbbP$ is non-degenerate.

Suppose on the contrary that $\sigma_t^2=0$. Then for each~$i$ we would have $\E((t,\hat Z(0,i) t)^2)=0$ and thus $(t,\hat Z(0,i)t)=0$ $\BbbP$-a.s. Denoting $b:=\langle0,\hate_i\rangle$, \twoeqref{eqn-hdef}{E:2.27} imply that, for $\BbbP$-a.e.~$\omega_b$,
\begin{equation}
\label{E:3.23d}
\int\BbbP(\textd\omega_b')\int_{\omega_b'}^{\omega_b}\textd\tilde\omega_b\,\,\E\Bigl(\bigl[1-(\tilde\omega_b-\omega_b)\gg_\Lambda^{(i)}(\tilde\omega,0)\bigr]\bigl|\nabla_i(t\cdot\psi)(\omega,0)\bigr|^2\Big|\FF_{(0,i)}\Bigr)=0,
\end{equation}
where $\FF_{(0,i)}:=\sigma(\omega_{b})$. Let $\Omega_1\subset[\lambda,\ffrac1\lambda]$ be the set of $\omega_b$ where this holds. Then~$\BbbP(\Omega_1)=1$ and, since~$\BbbP$ is non-degenerate, $\Omega_1$ contains at least two points. The expectation in \eqref{E:3.23d} is independent of~$\omega_b'$; subtracting the expression for two (generic) choices of~$\omega_b$ in~$\Omega_1$ then shows that the inner integral must vanish for all $\omega_b,\omega_b'\in\Omega_1$. But \eqref{E:2.24b} tells us that the prefactor in square brackets, and thus the conditional expectation, is non-negative (in fact, it is bounded away from zero). Hence, this can only happen when
\begin{equation}
%\label{}
\nabla_i(t\cdot\psi)(\cdot,0)=0,\qquad\BbbP\text{-a.s.\ for all }i=1,\dots,d.
\end{equation}
But then $c_{\eff}(t)=0$, which cannot hold for $t\ne0$ when \eqref{E:1.3} is in force. 
\end{proofsect}

\begin{proofsect}{Proof of Corollary~\ref{cor1.2} from Propositions~\ref{prop-approx-corrector-L2}--\ref{prop-2.8}}
Thanks to \twoeqref{E:2.1q}{E:2.3} and Proposition~\ref{prop-MCLT1}, $C_L^{\,\eff}(t)$ is a martingale whose increments, $Z_k$ are square integrable. Therefore,
\begin{equation}
%\label{}
\Var\bigl(C_L^{\,\eff}(t)\bigr)=\sum_{k=1}^{|\B(\Lambda_L)|}\E(Z_k^2).
\end{equation}
But the right-hand side is the expectation of the quantity on the left of \eqref{E:3.3}. Since the convergence in \eqref{E:3.3} occurs in $L^1(\BbbP)$, the claim follows.
\end{proofsect}

%%%%%%%%%%%%%%%%%%%%%%%%%%%%%
\section{The Meyers estimate}
\label{sec-Meyers}\noindent
%%%%%%%%%%%%%%%%%%%%%%%%%%%%%
The goal of this section is to give proofs of Propositions~\ref{prop-approx-corrector-L2} and~\ref{prop:Meyers estimate}. The former is a simple consequence of the Hilbert-space structure underlying the definition of a harmonic coordinate; the latter (to which this section owes its name) is a consequence of the Calder\'on-Zygmund regularity theory for singular integral operators.

\subsection{$L^2$ bounds and convergence}
Recall our notation $\LL_\omega$ for the operator in \eqref{E:2.8a}. We begin by noting an explicit representation of the minimum of $f\mapsto Q_\Lambda(f)$ as a function of the (Dirichlet) boundary condition:

\begin{lemma}
\label{lemma-4.1}
Let $\Lambda\subset\Z^d$ be finite and fix an $\omega\in\Omega$. Then there is $K\colon\partial\Lambda\times\partial\Lambda\to[0,\infty)$, depending on $\Lambda$ and $\omega$, such that for any $h$ that obeys $\LL_\omega h(x)=0$ for $x\in\Lambda$, 
\begin{equation}
%\label{}
Q_\Lambda(h)=\frac12\sum_{x,y\in\partial\Lambda}K(x,y)\bigl[h(y)-h(x)\bigr]^2.
\end{equation}
Moreover, $K(x,y)=K(y,x)$ for all $x,y\in\partial\Lambda$ and
\begin{equation}
%\label{}
\sum_{y\in\partial\Lambda}K(x,y)=\sum_{\begin{subarray}{c}
z\in\Lambda\\
\langle x,z\rangle\in\B(\Lambda)
\end{subarray}}
a_{xz}
\end{equation}
for all $x\in\partial\Lambda$.
\end{lemma}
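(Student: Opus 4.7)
The plan is to reduce $Q_\Lambda(h)$ for $\LL_\omega$-harmonic $h$ to a boundary quadratic form via the discrete Poisson kernel, from which $K$ can simply be read off. For each $y\in\partial\Lambda$, let $H_y\colon\Lambda\cup\partial\Lambda\to\R$ be the unique solution of $\LL_\omega H_y=0$ on $\Lambda$ with $H_y|_{\partial\Lambda}=\delta_y$; existence and uniqueness follow exactly as for \eqref{eqn-harmonic-coordinate-finite}. By linearity and uniqueness of the Dirichlet problem, $h=\sum_{y\in\partial\Lambda} h(y)\,H_y$ for every $\LL_\omega$-harmonic $h$ on $\Lambda\cup\partial\Lambda$, and the choice $h\equiv 1$ gives $\sum_{y\in\partial\Lambda} H_y\equiv 1$; the maximum principle yields $0\le H_y\le 1$ throughout $\Lambda\cup\partial\Lambda$.

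I then define the candidate kernel
\[
K(x,y):=\!\!\sum_{\substack{z\in\Lambda\\ \langle x,z\rangle\in\B(\Lambda)}}\!\! a_{xz}\,H_y(z),\qquad x,y\in\partial\Lambda.
\]
Non-negativity is immediate from $a_{xz}>0$ and $H_y\ge0$; interchanging the sums and applying $\sum_y H_y\equiv 1$ gives the row-sum identity $\sum_{y\in\partial\Lambda}K(x,y)=\sum_{z\in\Lambda,\,\langle x,z\rangle\in\B(\Lambda)} a_{xz}$ asserted in the lemma. Denote this common value by $s_x$.

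Both the symmetry of $K$ and the energy formula follow from the polarized Dirichlet form $\EE_\omega(f,g):=\sum_{\langle u,v\rangle\in\B(\Lambda)} a_{uv}[f(v)-f(u)][g(v)-g(u)]$ together with the discrete Green identity
\[
\EE_\omega(f,g)=-\!\!\sum_{u\in\Lambda} f(u)\LL_\omega g(u)\;-\!\!\sum_{u\in\partial\Lambda} f(u)\!\!\sum_{\substack{z\in\Lambda\\ \langle u,z\rangle\in\B(\Lambda)}}\!\! a_{uz}\bigl[g(z)-g(u)\bigr],
\]
valid for any $f,g$ on $\Lambda\cup\partial\Lambda$ and obtained by a direct summation by parts. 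Plugging $f=H_x$, $g=H_y$ and using $\LL_\omega H_y=0$ on $\Lambda$ together with $H_x|_{\partial\Lambda}=\delta_x$, $H_y|_{\partial\Lambda}=\delta_y$, the right-hand side collapses to $-K(x,y)+\delta_y(x)\,s_x$. The symmetry $\EE_\omega(H_x,H_y)=\EE_\omega(H_y,H_x)$ then forces $K(x,y)=K(y,x)$ (the $\delta$-contributions vanish when $x\ne y$ and match trivially when $x=y$).

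Finally, applying the same Green identity with $f=g=h$ harmonic kills the bulk term and leaves $Q_\Lambda(h)=-\sum_{u\in\partial\Lambda}h(u)\sum_{z\in\Lambda,\,\langle u,z\rangle\in\B(\Lambda)} a_{uz}[h(z)-h(u)]$. Substituting $h(z)=\sum_{y\in\partial\Lambda} h(y)H_y(z)$ into the inner bracket and using $s_u=\sum_y K(u,y)$ to absorb the $h(u)^2$-term rewrites this as $\sum_{u,y\in\partial\Lambda} K(u,y)\,h(u)[h(u)-h(y)]$; averaging with the expression obtained by relabeling $u\leftrightarrow y$ and invoking the symmetry of $K$ produces the desired $\tfrac12\sum_{x,y\in\partial\Lambda} K(x,y)[h(y)-h(x)]^2$. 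The only genuine obstacle is bookkeeping of the edges appearing in $\B(\Lambda)$: one must keep in mind that $\B(\Lambda)$ consists exclusively of $\Lambda$-$\Lambda$ and $\Lambda$-$\partial\Lambda$ edges, so all the sums above honestly exclude $\partial\Lambda$-$\partial\Lambda$ edges and the operator $\LL_\omega$ acts as its ``absorbed'' counterpart on $\partial\Lambda$; once that is set up correctly the argument is entirely routine.
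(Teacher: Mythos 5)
Your proof is correct and follows essentially the same route as the paper: integrate the Dirichlet form by parts, kill the bulk term by harmonicity, and express the boundary term through the kernel $K(x,y)=\sum_{z}a_{xz}H_y(z)$, which is exactly the paper's $\sum_{z} a_{xz}\,p_\Lambda(z,y)$ since $H_y(\cdot)=p_\Lambda(\cdot,y)$. The only (minor) divergence is that you obtain the symmetry of $K$ from the symmetry of the polarized Dirichlet form applied to $H_x,H_y$, whereas the paper invokes reversibility of the random walk --- both are valid, and yours is slightly more self-contained.
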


\begin{proofsect}{Proof}
``Integrating'' by parts we obtain
\begin{equation}
\begin{aligned}
\label{eqn:bound_L2}
Q_\Lambda(h)
%\sum_{\langle x,y\rangle \in\mathbb B(\Lambda)}a_{xy}\bigl[h(y)-h(x)\bigr]^2
%&=\sum_{\langle x,y\rangle \in\mathbb B(\Lambda)}a_{xy}\bigl[h(y)-h(x)\bigr]h(y)\\
&=-\sum_{y\in\Lambda}h(y)(\LL_\omega h)(y)+
\sum_{\begin{subarray}{c}
y\in\partial\Lambda,\,x\in\Lambda\\
\langle x,y\rangle\in\B(\Lambda)
\end{subarray}}
 a_{xy}\,\bigl[h(y)-h(x)\bigr]h(y).
\end{aligned}
\end{equation}
Employing the fact that $h$ is $\LL_\omega$-harmonic, the first sum drops out. For the second sum we recall that $h(x)=\sum_{z\in\partial \Lambda}p_\Lambda(x,z)h(z)$, where $p_\Lambda(x,z)$ is the discrete Poisson kernel which can be defined e.g.\ by $p_\Lambda(x,z):=P_\omega^x(X_{\tau_{\partial\Lambda}}=z)$ for $\tau_{\partial\Lambda}$ denoting the first exit time from $\Lambda$ of the random walk in conductances~$\omega$. Now set
\begin{equation}
%\label{}
K(y,z):=\sum_{\begin{subarray}{c}
x\in\Lambda\\\langle x,y\rangle\in\B(\Lambda)
\end{subarray}}
a_{xy}p_\Lambda(x,z)
\end{equation}
and note that $\sum_{z\in\partial\Lambda}K(y,z)=\sum_{x\in\Lambda,\,\langle x,y\rangle\in\B(\Lambda)}a_{xy}$. It follows that
\begin{equation}
%\label{}
\sum_{\begin{subarray}{c}
y\in\partial\Lambda,\,x\in\Lambda\\
\langle x,y\rangle\in\B(\Lambda)
\end{subarray}}
 a_{xy}\,\bigl[h(y)-h(x)\bigr]h(y)
=\sum_{y,z\in\partial\Lambda} K(y,z)\bigl[h(y)-h(z)\bigr]h(y).
\end{equation}
The representation using the random walk and its reversiblity now imply that $K$ is symmetric. Symmetrizing the last sum then yields the result. 
\end{proofsect}

\begin{remark}
We note that Lemma~\ref{lemma-4.1} holds even for vector valued functions; just replace $[h(y)-h(x)]^2$ by the norm squared of $h(y)-h(x)$. This applies to several derivations that are to follow; a point that we will leave without further comment.
\end{remark}

We can now prove Proposition \ref{prop-approx-corrector-L2} dealing with the convergence of $\nabla\Psi_\Lambda$ to~$\nabla\psi$ in $\Vert\cdot\Vert_{\Lambda,2}$-norm, as $\Lambda:=\Lambda_L$ fills up all of~$\Z^d$.
 
\begin{proofsect}{Proof of Proposition~\ref{prop-approx-corrector-L2}}
Abbreviate $h(x):=\psi(\omega,x)-\Psi_{\Lambda_L}(\omega,x)$. The bound \eqref{E:1.3} implies
\begin{equation}
\label{E:4.5}
\bigr\Vert\nabla(\Psi_{\Lambda_L}-\psi)\bigr\Vert_{\Lambda_L,2}^2\le\frac1\lambda\,\frac1{|\Lambda_L|}\,\,
\E\biggl(\sum_{\langle x,y\rangle\in\B(\Lambda_L)}a_{xy}\bigl|h(y)-h(x)\bigr|^2\biggr).
\end{equation}
Let $f\colon\Lambda\cup\partial\Lambda\to\R^d$ be the minimizer of
\begin{equation}
%\label{}
\inf\biggl\{\sum_{\langle x,y\rangle \in\mathbb B(\Lambda_L)} \bigl|f(y)-f(x)\bigr|^2,\,f(z)=\chi(z)\mbox{ for all } z\in\partial\Lambda_L\biggr\}.
\end{equation}
Since $h$ is the minimizer of the corresponding Dirichlet energy with conductances $\{a_{xy}\}$ and boundary condition~$\chi$, we get using \eqref{E:1.3}
\begin{equation}
\begin{aligned}
\sum_{\langle x,y\rangle \in\mathbb B(\Lambda_L)} a_{xy}\bigl|h(y)-h(x)\bigr|^2&\leq
\sum_{\langle x,y\rangle \in\mathbb B(\Lambda_L)} a_{xy}\bigl|f(y)-f(x)\bigr|^2
\\
&\leq \frac 1\lambda\sum_{\langle x,y\rangle \in\mathbb B(\Lambda_L)} \bigl|f(y)-f(x)\bigr|^2.
\end{aligned}
\end{equation}
Writing the last sum coordinate-wise and applying Lemma~\ref{lemma-4.1}, we thus get
\begin{equation}
\label{E:4.8}
\sum_{\langle x,y\rangle \in\mathbb B(\Lambda_L)} a_{xy}\bigl|h(y)-h(x)\bigr|^2
\le\frac1{2\lambda}\sum_{x,y\in\partial\Lambda_L}K(x,y)\bigl|\chi(\omega,y)-\chi(\omega,x)\bigr|^2,
\end{equation}
where the kernel $K(x,y)$ pertains to the homogeneous problem, i.e., the simple random walk. Note that these bounds hold for all configurations satisfying \eqref{E:1.3}.

By shift covariance and sublinearity of the corrector (cf~Proposition~\ref{prop-corrector}(2,4)), for each $\varepsilon>0$ there is $A=A(\varepsilon)$ such that
\begin{equation}
%\label{}
\mathbb E\bigl(\,\bigl|\chi(\cdot,x)-\chi(\cdot,y)\bigr|^2\bigr)\leq A+\varepsilon |x-y|^2.
\end{equation}
Using this and \eqref{E:4.8} in \eqref{E:4.5} yields
\begin{equation}
\label{E:4.10}
\bigr\Vert\nabla(\Psi_{\Lambda_L}-\psi)\bigr\Vert_{\Lambda_L,2}^2\le\frac1{2\lambda^2}\,\frac1{|\Lambda_L|}
\sum_{x,y\in\partial\Lambda_L}K(x,y)\bigl(A+\varepsilon|x-y|^2\bigr).
\end{equation}
But $\sum_{y\in\partial\Lambda_L}K(x,y)\le1$ for each $x\in\partial\Lambda_L$ while $\sum_{x,y\in\partial\Lambda_L}K(x,y)|x-y|^2$ is, by Lemma~\ref{lemma-4.1}, the Dirichlet energy of the function $x\mapsto x$ for conductances all equal to~$1$. Hence, the last sum in \eqref{E:4.10} is bounded by $A|\partial\Lambda_L|+\varepsilon|\B(\Lambda_L)|$. Taking $L\to\infty$ and $\varepsilon\downarrow0$ finishes the proof.
\end{proofsect}

\begin{remark}
\label{R:representation}
As alluded to in the introduction, the $L^2$-convergence $\nabla\Psi_{\Lambda_L}\to\nabla\psi$ permits us to prove the formula \eqref{E:1.5} for~$c_\eff(t)$. The argument is similar to (albeit much easier than) what we used in the proof of Proposition~\ref{prop-MCLT1}. Indeed, we trivially decompose
\begin{equation}
\label{E:4.11}
C^{\,\eff}_L(t)=Q_{\Lambda_L}\bigl(t\cdot\Psi_{\Lambda_L}\bigr)=Q_{\Lambda_L}(t\cdot\psi)+\bigl(Q_{\Lambda_L}\bigl(t\cdot\Psi_{\Lambda_L}\bigr)-Q_{\Lambda_L}(t\cdot\psi)\bigr).
\end{equation}
The stationarity of the gradients of~$\psi$ and the Spatial Ergodic Theorem imply that for any ergodic law~$\BbbP$ on conductances, $\BbbP$-a.s.\ and in $L^1(\BbbP)$,
\begin{equation}
\label{E:4.12}
\frac1{|\Lambda_L|}Q_{\Lambda_L}(t\cdot\psi)\,\underset{L\to\infty}\longrightarrow\,
\E\biggl(\,\,\sum_{x=\hate_1,\dots,\hate_d}a_{0,x}(\omega)\bigl|t\cdot\psi(\omega,x)\bigr|^2\biggr).
\end{equation}
It follows from the construction of the harmonic coordinate that expression on the right coincides with the infimum in \eqref{E:1.5}. (There is no gradient on the right-hand side of \eqref{E:4.12} because $\psi(\omega,0):=0$.) It remains to control the difference on the extreme right of \eqref{E:4.11}. 

Using the quadratic nature of $Q_\Lambda$, the ellipticity assumption \eqref{E:1.3} and Cauchy-Schwarz,
\begin{multline}
%\label{}
\qquad
\frac{\E\bigl|Q_{\Lambda}\bigl(t\cdot\Psi_{\Lambda}\bigr)-Q_{\Lambda}(t\cdot\psi)\bigr|}{|\Lambda|}
\\
\le\frac1\lambda|t|^2\bigl\Vert\nabla(\Psi_{\Lambda}-\psi)\bigr\Vert_{\Lambda,2}^2
+\frac 2\lambda|t|^2\Vert\nabla\psi\Vert_2\bigl\Vert\nabla(\Psi_{\Lambda}-\psi)\bigr\Vert_{\Lambda,2}.\
\qquad
\end{multline}
By Proposition~\ref{prop-approx-corrector-L2} --- which holds for any shift-ergodic (elliptic) law on conductances --- the right-hand side tends to zero as $\Lambda:=\Lambda_L$ increases to~$\Z^d$. Since we know that $|\Lambda_L|^{-1}C^{\,\eff}_L(t)$ is bounded and converges almost surely (e.g., by the Subadditive Ergodic Theorem), it converges also in~$L^1(\BbbP)$. We conclude that the limit value $c_\eff(t)$ is given by \eqref{E:1.5}. 
\end{remark}

\subsection{The Meyers estimate in finite volume}
Key to the proof of Proposition \ref{prop:Meyers estimate} is the Meyers estimate. The term owes its name to Norman G. Meyers~\cite{Meyers} who discovered a bound on $L^p$-continuity (in the right-hand side) of the solutions of Poisson equation with second-order elliptic differential operators in divergence from, provided the associated coefficients are close to a constant. The technical ingredient underpinning this observation is the Calder\'on-Zygmund regularity theory for certain singular integral operators in~$\R^d$. (Incidentally, as noted in~\cite{Meyers}, Meyers' argument is a generalization of earlier work of Boyarskii, cf~\cite[ref.~2 and~3]{Meyers} for systems of first-order PDEs and a version of his result was also derived, though not published, by Calder\'on himself; cf~\cite[page~190]{Meyers}). 

To ease the notation, we will write $\Vert f\Vert_p$ for the canonical norm in $\ell^p(\Lambda)$,
\begin{equation}
%\label{}
\Vert f\Vert_p:=\Bigl(\,\sum_{x\in\Lambda}\bigl|f(x)\bigr|^p\Bigr)^{\ffrac1p},
\end{equation}
throughout the rest of this section. This carries no harm as all of our estimates will be pointwise rather than under expectation.

\smallskip
Let us review the gist of Meyers' argument for functions on~$\Z^d$. Our notation is inspired by that used in Naddaf and Spencer~\cite{Naddaf-Spencer}, who seem to be the first to recognize its significance for the present type of problems, that in Gloria and Otto~\cite{Gloria-Otto}. A general form of the second order difference operator $\LL$ in divergence form is
\begin{equation}
\label{E:div-form}
\LL:=\nabla^\star\cdot A\cdot\nabla,
\end{equation}
where $A=\{A_{ij}(x)\colon i,j=1,\dots,d,\, x\in\Z^d\}$ are $x$-dependent matrix coefficients, $\nabla f(x)$ is a vector whose $i$-th component is $\nabla_if(x):=f(x+\hate_i)-f(x)$ and $\nabla^\star$ is its conjugate acting as $\nabla^\star_i f(x):=f(x)-f(x-\hate_i)$. The above~$\LL$ is explicitly given by
\begin{equation}
\label{E:4.16a}
(\LL f)(x)=\sum_{i,j=1}^d \Bigl(A_{i,j}(x)\bigl[f(x+\hate_i)-f(x)\bigr]-A_{i,j}(x-\hate_j)\bigl[f(x+\hate_i-\hate_j)-f(x-\hate_j)\bigr]\Bigr).
\end{equation}
Now, if $A$ is close to the identity matrix, it makes sense to write
\begin{equation}
\label{E:4.17}
\LL=\Delta+\nabla^\star\cdot (A-\text{id})\cdot\nabla,
\end{equation}
where we noted that the standard lattice Laplacian $\Delta$ corresponds to $\nabla^\star\cdot\text{id}\cdot\nabla$. This formula can be used as a starting point of perturbative arguments.

Consider  a finite set $\Lambda\subset\Z^d$ and let $g\colon\Lambda\cup\partial\Lambda\to\R^d$. Let~$f$ be a solution to the Poisson equation
\begin{equation}
%\label{}
-\LL\,f=\nabla^\star\cdot g,\qquad\text{in }\Lambda,
\end{equation}
with $f:=0$ on~$\partial\Lambda$. Employing \eqref{E:4.17}, we can rewrite this as
\begin{equation}
%\label{}
-\Delta f=\nabla^\star\cdot\bigl[g+(A-\text{id})\cdot\nabla f\bigr].
\end{equation}
The function on the right has vanishing total sum over~$\Lambda$ and hence it lies in the domain of the inverse $(\Delta)^{-1}_\Lambda$ of~$\Delta$ with zero boundary conditions. Taking this inverse followed by one more gradient, and denoting
\begin{equation}
%\label{}
\mathcal{K}_\Lambda:=\nabla(-\Delta)^{-1}_\Lambda\nabla^\star,
\end{equation}
this equation translates to
\begin{equation}
\label{E:4.20}
\nabla f=\mathcal{K}_\Lambda\cdot\bigl[g+(A-\text{id})\nabla f\bigr].
\end{equation}
A first noteworthy point is that this is now an autonomous equation for $\nabla f$. A second point is that, if $\Vert\mathcal{K}_\Lambda\Vert_p$ is the norm of $\mathcal{K}_\Lambda$ as a map (on vector valued functions) $\ell^p(\Lambda)\to\ell^p(\Lambda)$ and $\Vert A-\text{id}\Vert_\infty$ is the least a.s.\ upper bound on the coefficients of $A(x)-\text{id}$, uniform in~$x$, we get
\begin{equation}
\Vert \nabla f\Vert_p\leq \Vert\mathcal{K}_\Lambda\Vert_p\Vert A-\text{id}\Vert_\infty \Vert\nabla f\Vert_p + \Vert\mathcal{K}_\Lambda\Vert_p\Vert g\Vert_p.
\end{equation}
Assuming $\Vert\mathcal{K}_\Lambda\Vert_p\Vert A-\text{id}\Vert_\infty<1$ this yields
\begin{equation}
\label{eqn-meyers-estimate}
\Vert \nabla f\Vert_p\leq \frac{\Vert\mathcal{K}_\Lambda\Vert_p\Vert g\Vert_p}{1-\Vert\mathcal{K}_\Lambda\Vert_p\Vert A-\text{id}\Vert_\infty}.
\end{equation}
Furthermore, the condition $\Vert\mathcal{K}_\Lambda\Vert_p\Vert A-\text{id}\Vert_\infty<1$ ensures  the very existence of a unique solution $\nabla f$ to \eqref{E:4.20} via a contraction argument; \eqref{eqn-meyers-estimate} then implies the continuity of $g\mapsto\nabla f$ in $\ell^p(\Lambda)$.

\smallskip
The aforementioned general facts are relevant for us because $\LL_\omega$ is of the  form \eqref{E:div-form}. Indeed, set $A_{ij}(x):=\delta_{ij}a_{x,x+\hate_i}$ and note that \eqref{E:4.16a} reduces to~\eqref{E:2.8a}. The finite-volume corrector
\begin{equation}
%\label{}
\chi_\Lambda(\omega,x):=\Psi_\Lambda(\omega,x)-x
\end{equation}
then solves the Poisson equation
\begin{equation}
%\label{}
-\LL_\omega\chi_\Lambda=\nabla^\star\cdot g,\quad\text{where}\quad g(x):=(a_{x,x+\hate_1},\dots,a_{x,x+\hate_d}).
\end{equation}
Thanks to \eqref{E:1.3}, this~$g$ is bounded uniformly so, in order to have \eqref{eqn-meyers-estimate} for all finite boxes, our main concern is the following claim:

\begin{theorem}
\label{Lpbound-K}
For each $p\in(1,\infty)$, the operator $\mathcal{K}_{\Lambda_L}$ is bounded in $\ell^p(\Lambda_L)$, uniformly in $L\ge1$.
\end{theorem}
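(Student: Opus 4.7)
The plan is to invoke the discrete Calder\'on--Zygmund theory. First, I would dispose of the case $p=2$ by an energy argument: given a vector field $g$ on $\Lambda_L$, set $f:=(-\Delta)^{-1}_{\Lambda_L}\nabla^\star g$, so that $f=0$ on $\partial\Lambda_L$ and $-\Delta f=\nabla^\star g$ in $\Lambda_L$. Summation by parts gives
\begin{equation}
\Vert \mathcal{K}_{\Lambda_L} g\Vert_2^2=\Vert\nabla f\Vert_2^2=\langle -\Delta f,f\rangle=\langle\nabla^\star g,f\rangle=\langle g,\nabla f\rangle\le\Vert g\Vert_2\,\Vert\nabla f\Vert_2,
\end{equation}
so $\Vert\mathcal{K}_{\Lambda_L}\Vert_{2\to 2}\le 1$ uniformly in~$L$.

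For $p\ne 2$, I would view $\mathcal{K}_{\Lambda_L}$ as an integral operator with matrix kernel $K_L(x,y):=\nabla^x\otimes\nabla^{y,\star}G_L(x,y)$, where $G_L$ is the Dirichlet Green's function of $-\Delta$ on~$\Lambda_L$. The strategy is then to verify the (discrete) Calder\'on--Zygmund hypotheses uniformly in $L$:
\begin{equation}
|K_L(x,y)|\le\frac{C}{(1+|x-y|)^{d}}\qquad\text{and}\qquad
|K_L(x,y)-K_L(x,y')|\le\frac{C\,|y-y'|}{(1+|x-y|)^{d+1}}
\end{equation}
in the regime $|x-y|\ge 2|y-y'|$, together with the symmetric estimate in the first variable. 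Modulo these bounds, the standard machinery (Calder\'on--Zygmund decomposition at height $\lambda$ yielding weak-type $(1,1)$, Marcinkiewicz interpolation against the $L^2$ bound to cover $1<p<2$, and duality for $2<p<\infty$) delivers the claimed $\ell^p$ boundedness with a constant independent of $L$.

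The only analytic input needed is therefore uniform pointwise control on the second and third discrete gradients of $G_L$. For the free Green's function $G$ of $-\Delta$ on $\Z^d$ these estimates are classical, essentially from the Fourier representation: $G$ decays like $|x-y|^{2-d}$ (or $\log$ in $d=2$), $\nabla\nabla G$ like $|x-y|^{-d}$, and $\nabla\nabla\nabla G$ like $|x-y|^{-d-1}$. To transfer these to $\Lambda_L$, I would write the decomposition $G_L(x,\cdot)=G(x,\cdot)-H_L(x,\cdot)$, where $H_L(x,\cdot)$ is the $-\Delta$-harmonic function on $\Lambda_L$ with boundary values $G(x,\cdot)$ on $\partial\Lambda_L$. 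Discrete elliptic regularity (interior Cacciopoli/mean-value bounds for harmonic functions on $\Z^d$, applied inside cubes well separated from $\partial\Lambda_L$) then yields derivative bounds on $H_L$ comparable to the sup of its boundary data, which in turn are controlled by the $G$-decay at distance $\gtrsim\mathrm{dist}(y,\partial\Lambda_L)$ from $x$. On a rectangular box one may alternatively use the method of images to write $G_L$ as an explicit absolutely convergent alternating series of translates of $G$, and differentiate termwise.

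The main obstacle in carrying this out will be the smoothness estimate $|K_L(x,y)-K_L(x,y')|\le C|y-y'|/|x-y|^{d+1}$ near the boundary: when $y$ lies close to $\partial\Lambda_L$, the naive triple-gradient control on $G_L$ can degenerate, and one must use that both $K_L(x,\cdot)$ and its discrete gradient in $y$ must be compared across boundary-reflected copies. This is precisely the ``triple gradient of the Green function of the simple random walk in finite boxes'' advertised in the outline, and appears to be the key technical step; once it is in place, the whole Calder\'on--Zygmund argument goes through verbatim and the $L^p$ bound on $\mathcal{K}_{\Lambda_L}$ follows for every $p\in(1,\infty)$, uniformly in $L$.
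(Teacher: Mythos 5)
Your proposal is correct and follows essentially the same route as the paper: an $\ell^2$ energy bound, a weak-type $(1,1)$ estimate via the Calder\'on--Zygmund decomposition driven by the $|x-y|^{-(d+1)}$ decay of the triple gradient of the Dirichlet Green's function (which the paper, like your second suggested route, obtains by the method of images on the rectangular box), then Marcinkiewicz interpolation for $1<p<2$ and duality for $p>2$. You correctly identify the triple-gradient estimate as the key technical input.
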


\begin{proofsect}{Proof of Proposition \ref{prop:Meyers estimate} from Theorem~\ref{Lpbound-K}}
Let $p^\ast>4$. Since (in our setting) $\Vert A-\text{id}\Vert_\infty\le\lambda^{-1}-1$, we may choose $\lambda\in(0,1)$ close enough to one so that $\sup_{L\ge1}\Vert\mathcal{K}_{\Lambda_L}\Vert_{p^\ast}\Vert A-\text{id}\Vert_\infty<1$. From the above derivation it follows
\begin{equation}
\label{E:4.23}
\sup_{L\ge1}\Vert \nabla \chi_{\Lambda_L}\Vert_{\Lambda_L,{p^\ast}}<\infty.
\end{equation}
We claim that this implies
\begin{equation}
\label{E:4.24}
\Vert\nabla\chi\Vert_p<\infty,\qquad p<p^\ast.
\end{equation}
Indeed, pick $\alpha>0$ and note that, for any $\epsilon\in(0,\alpha)$,
\begin{equation}
%\label{}
\sum_{x\in\Lambda_L}\1_{\{|\nabla\chi(\cdot,x)|>\alpha\}}
\le
\sum_{x\in\Lambda_L}\1_{\{|\nabla\chi_{\Lambda_L}(\cdot,x)|>\alpha-\epsilon\}}
+
\sum_{x\in\Lambda_L}\1_{\{|\nabla\chi_{\Lambda_L}(\cdot,x)-\nabla\chi(\cdot,x)|>\epsilon\}}.
\end{equation}
Taking expectations and dividing by $|\Lambda_L|$, the left hand side becomes $\BbbP(|\nabla\chi(\cdot,0)|>\alpha)$, while the second sum on the right can be bounded by $\epsilon^{-2}\Vert\nabla\chi_{\Lambda_L}-\nabla\chi\Vert_{\Lambda_L,2}^2$, which tends to zero as $L\to\infty$ by Proposition~\ref{prop-approx-corrector-L2}. Applying Chebyshev's inequality to the first sum on the right and taking $L\to\infty$ followed by $\epsilon\downarrow0$ yields
\begin{equation}
%\label{}
\BbbP\bigl(|\nabla\chi(\cdot,0)|>\alpha\bigr)\le\frac1{\alpha^{p^\ast}}\sup_{L\ge1}\Vert \nabla \chi_{\Lambda_L}\Vert_{\Lambda_L,{p^\ast}}^{p^\ast}.
\end{equation}
Multiplying by $\alpha^{p-1}$ and integrating over $\alpha>0$ then proves \eqref{E:4.24}.
  
Returning to the claims in Proposition~\ref{prop:Meyers estimate}, inequality \eqref{E:4.24} is a restatement of \eqref{eqn-approx-corrector-Lp-1}. Since \twoeqref{E:4.23}{E:4.24} imply the uniform boundedness of $\Vert\nabla(\chi_{\Lambda_L}-\chi)\Vert_{\Lambda_L,p}$, for each $p<p^\ast$, Lemma~\ref{lemma:p_p'} then shows $\Vert\nabla(\chi_{\Lambda_L}-\chi)\Vert_{\Lambda_L,p}\to0$, as $L\to\infty$ for all $p<p^\ast$. This proves \eqref{eqn-approx-corrector-Lp-2} as well.
\end{proofsect}

\subsection{Interpolation}
In the proof of Theorem \ref{Lpbound-K} we will follow the classical argument --- by and large due to Marcinkiewicz --- that is spelled out in Chapter~2 (specifically, proof of Theorem~1 in Section~2.2) of Stein's book~\cite{Stein}. The reasoning requires only straightforward adaptations due to discrete setting and finite volume, but we still prefer to give a full argument to keep the present paper self-contained. A key idea is the use of interpolation between the strong $\ell^2$-type estimate (Lemma~\ref{L2bound-K}) and the weak $\ell^1$-type estimate for~$\KK_{\Lambda_L}$ (Lemma~\ref{weak-1-1-bound-K}). Both of these of course need to hold uniformly in~$L\ge1$. 

\begin{lemma}
\label{L2bound-K}
For any finite $\Lambda\subset\Z^d$, the $\ell^2(\Lambda)$-norm of $\mathcal{K}_{\Lambda}$ satisfies $\Vert \mathcal{K}_{\Lambda}\Vert_2\le1$.
\end{lemma}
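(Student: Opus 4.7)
The plan is to prove the bound via a discrete energy identity. Given a vector field $g\colon\Lambda\cup\partial\Lambda\to\R^d$, set $f:=(-\Delta)^{-1}_\Lambda\nabla^\star g$, so that $f$ is the unique scalar function on $\Z^d$ that vanishes on and outside $\partial\Lambda$ and satisfies the Poisson equation $-\Delta f=\nabla^\star g$ on $\Lambda$. Since by construction $\mathcal{K}_\Lambda g=\nabla f$, the inequality $\Vert\mathcal{K}_\Lambda\Vert_2\le 1$ reduces to the pointwise-in-$g$ estimate $\Vert\nabla f\Vert_2\le\Vert g\Vert_2$.

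To establish this estimate, I would test the Poisson equation against $f$ itself: sum $(-\Delta f)(x)f(x)=(\nabla^\star g)(x)f(x)$ over $x\in\Lambda$ and perform discrete integration by parts on both sides. Because $f\equiv 0$ on (and beyond) $\partial\Lambda$, every boundary contribution vanishes cleanly. On the left, writing $-\Delta=\nabla^\star\cdot\nabla$ and transferring the outer $\nabla^\star$ onto $f$ yields $\langle -\Delta f, f\rangle=\Vert\nabla f\Vert_2^2$, where the sum runs over pairs $(x,i)$ with $\langle x,x+\hate_i\rangle\in\B(\Lambda)$ (on any other edge $\nabla f$ vanishes since $f$ does). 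On the right, the same manipulation turns $\langle\nabla^\star g,f\rangle$ into $-\langle g,\nabla f\rangle$ with the inner product again summed over $\B(\Lambda)$.

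Putting the two computations together gives $\Vert\nabla f\Vert_2^2=-\langle g,\nabla f\rangle$. The Cauchy--Schwarz inequality then delivers $\Vert\nabla f\Vert_2^2\le\Vert g\Vert_2\,\Vert\nabla f\Vert_2$, and dividing through by $\Vert\nabla f\Vert_2$ (or noting the trivial case $\nabla f\equiv 0$) concludes $\Vert\nabla f\Vert_2\le\Vert g\Vert_2$, as needed.

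The only point that requires any care is the bookkeeping of summation domains: because the paper's $\nabla^\star$ is defined via backward differences rather than as the Hilbert-space adjoint of $\nabla$, a sign flip enters the discrete integration by parts, and one must verify that the oriented edge set indexing both $\Vert\nabla f\Vert_2^2$ and $\langle g,\nabla f\rangle$ is exactly $\B(\Lambda)$, which is where $g$ is assumed to be defined. I anticipate no substantive obstacle beyond this bookkeeping; the bound is the ``strong $L^2$'' endpoint of the Marcinkiewicz interpolation strategy sketched for Theorem~\ref{Lpbound-K}, and is essentially a one-line Hilbert-space computation once the sign conventions are fixed.
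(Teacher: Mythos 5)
Your argument is correct. It is worth noting, though, that it reaches the bound by a slightly different route than the paper. The paper works with the quadratic form: it invokes the variational identity $(h,T^{-1}h)=\sup_g\{2(g,h)-(g,Tg)\}$ for the regularized operator $T:=\varepsilon-\Delta$, completes the square to get $(f,\mathcal{K}_\Lambda f)\le (f,f)$ after letting $\varepsilon\downarrow 0$, and then (implicitly using that $\mathcal{K}_\Lambda$ is self-adjoint and positive semi-definite, so that its norm is the supremum of the Rayleigh quotient) concludes $\Vert\mathcal{K}_\Lambda\Vert_2\le 1$. You instead test the Poisson equation against its own solution to get the energy identity $\Vert\nabla f\Vert_2^2=-\langle g,\nabla f\rangle$ and apply Cauchy--Schwarz, which bounds $\Vert\mathcal{K}_\Lambda g\Vert_2$ directly for every $g$. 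The two computations are the same energy estimate in substance (the paper's completed square is exactly your Cauchy--Schwarz step), but your version has two small advantages: it needs no $\varepsilon$-regularization, since the Dirichlet Laplacian on a finite set is already invertible, and it does not rely on self-adjointness of $\mathcal{K}_\Lambda$ to pass from a form bound to an operator-norm bound. The one point you rightly flag --- that the paper's $\nabla^\star$ is the negative of the $\ell^2$-adjoint of $\nabla$, so a sign appears in the summation by parts, and that the edge sums supporting $\nabla f$ sit inside $\B(\Lambda)$ because $f$ vanishes on and beyond $\partial\Lambda$ --- is indeed only bookkeeping and does not affect the estimate.
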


\begin{proof}
Let $\HH$ be a Hilbert space and~$T$ a positive self-adjoint, bounded and invertible operator. Then for all $h\in\HH$,
\begin{equation}
%\label{}
\bigl(h,T^{-1}h\bigr)=\sup_{g\in\mathcal \HH}\bigl\{2(g,h)-(g,Tg)\bigr\}.
\end{equation}
We will apply this to $\HH$ given by the space (of $\R$-valued functions)~$\ell^2(\Lambda)$, $T:=\epsilon-\Delta$  and $h:=\nabla^\star\cdot f$ for some $f\colon\Lambda\to\R^d$ with zero boundary conditions outside~$\Lambda$. Then
\begin{equation}
\begin{aligned}
\bigl(\nabla^\star\cdot f,(\varepsilon-\Delta)^{-1}\nabla^\star\cdot f\bigr)&=\sup_{g\in\ell^2(\Lambda)}\bigl\{2(g,\nabla^\star\cdot f)-\varepsilon(g,g)+(g,\Delta g)\bigr\}\\
&=\sup_{g\in\ell^2(\Lambda)}\bigl\{2(\nabla g,f)-\varepsilon(g,g)-(\nabla g,\nabla  g)-(f,f)\bigr\}+(f,f)\\
&=\sup_{g\in\ell^2(\Lambda)}\bigl\{-(\nabla g-f,\nabla g-f)\bigr\}+(f,f)\\
&\leq (f,f),
\end{aligned}
\end{equation}
where we used that $\nabla^\star$ is the adjoint of~$\nabla$ in the space of $\R^d$-valued functions $\ell^2(\Lambda)$ and where the various inner products have to be interpreted either for $\R$-valued or $\R^d$-valued functions accordingly. Taking $\epsilon\downarrow0$, the left-hand side becomes $(f,\mathcal{K}_\Lambda\cdot f)$. The claim follows.
\end{proof}

The second ingredient turns out to be technically more involved.

\begin{lemma}
\label{weak-1-1-bound-K}
$\mathcal{K}_{\Lambda_L}$ is of weak-type (1-1), uniformly in $L>1$. That is, there exists $\widehat K_1$ such that, for all $L>1$, $f\in\ell^1(\Lambda_L)$ and $\alpha>0$,
\begin{equation}\label{weak-type-1-1}
\bigl\vert\{z\in\Lambda_L\colon\vert\mathcal{K}_{\Lambda_L} f(z)\vert>\alpha\}\bigr\vert\leq\widehat K_1\frac{\Vert f\Vert_1}{\alpha}.
\end{equation}
\end{lemma}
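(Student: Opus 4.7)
The plan is to follow the classical Calderón--Zygmund/Marcinkiewicz argument (cf.\ Stein~\cite{Stein}, Chapter I, \S 5) adapted to the discrete finite-volume lattice setting. Given $f\in\ell^1(\Lambda_L)$ and $\alpha>0$, I would first perform a discrete Calderón--Zygmund decomposition at level~$\alpha$: starting from $\Lambda_L$ itself (which can be taken to be a power-of-two box at the cost of an irrelevant constant), iteratively bisect every dyadic sub-box whose average of $|f|$ does not yet exceed~$\alpha$, and stop (select) a sub-box $Q_j$ as soon as its average of $|f|$ jumps above~$\alpha$. This produces disjoint dyadic boxes $\{Q_j\}$ with
\begin{equation}
\alpha<\frac1{|Q_j|}\sum_{x\in Q_j}|f(x)|\leq 2^d\alpha,\qquad |f(x)|\leq\alpha \text{ for } x\in\Lambda_L\setminus\bigcup_j Q_j,
\end{equation}
and, setting $\Omega:=\bigcup_j Q_j$, we have $|\Omega|\leq\alpha^{-1}\|f\|_1$. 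Write $f=g+b$ with $b:=\sum_j b_j$, $b_j:=(f-\langle f\rangle_{Q_j})\1_{Q_j}$; then $\|g\|_\infty\leq 2^d\alpha$, $\|g\|_1\leq\|f\|_1$, and each $b_j$ is supported in $Q_j$ with vanishing sum there.

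For the \emph{good} part, Chebyshev together with Lemma~\ref{L2bound-K} gives
\begin{equation}
\bigl|\{x\in\Lambda_L\colon|\mathcal{K}_{\Lambda_L}g(x)|>\alpha/2\}\bigr|\leq\frac{4}{\alpha^2}\|g\|_2^2\leq\frac{4\|g\|_\infty\|g\|_1}{\alpha^2}\leq\frac{2^{d+2}}{\alpha}\|f\|_1,
\end{equation}
which is of the desired form. For the \emph{bad} part, let $Q_j^*$ be the concentric dilation of $Q_j$ by a fixed factor (say~$4\sqrt d$), and set $\Omega^*:=\bigcup_j Q_j^*$, so that $|\Omega^*|\leq C\alpha^{-1}\|f\|_1$. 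Writing $\mathcal{K}_{\Lambda_L}$ as a kernel operator $(\mathcal{K}_{\Lambda_L}h)(x)=\sum_y K_L(x,y)h(y)$, where each component of $K_L(x,y)$ is a double discrete derivative (one in~$x$, one in~$y$) of the Dirichlet Green function $G_L$ of $-\Delta$ on~$\Lambda_L$, the cancellation of $b_j$ lets me write, for any fixed $y_j\in Q_j$,
\begin{equation}
(\mathcal{K}_{\Lambda_L}b_j)(x)=\sum_{y\in Q_j}\bigl[K_L(x,y)-K_L(x,y_j)\bigr]b_j(y).
\end{equation}

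The key analytic input--- to be established separately using the uniform bound on the \emph{triple gradient} of $G_L$ advertised in the overview of Section~\ref{sec-Meyers}---is the discrete H\"ormander condition
\begin{equation}
\sup_{L\ge1}\,\sup_{Q\subset\Lambda_L}\,\sup_{y\in Q}\,\,\sum_{x\colon x\notin Q^*}\bigl|K_L(x,y)-K_L(x,y_Q)\bigr|\,\leq\,C_H<\infty,
\end{equation}
where $y_Q$ is the center of $Q$ and the sum is estimated by Taylor-expanding $K_L$ in its second argument and using $|\nabla_y K_L(x,y)|\lesssim |x-y|^{-d-1}$, which in turn follows from a pointwise upper bound on $\nabla^3 G_L$ uniform in~$L$. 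Granted~$C_H$, applying this bound, summing over~$j$, and using $\sum_j\|b_j\|_1\leq 2\|f\|_1$ yields
\begin{equation}
\sum_{x\notin\Omega^*}\bigl|\mathcal{K}_{\Lambda_L}b(x)\bigr|\,\leq\,2C_H\|f\|_1,
\end{equation}
so Chebyshev gives $|\{x\notin\Omega^*:|\mathcal{K}_{\Lambda_L}b(x)|>\alpha/2\}|\leq 4C_H\alpha^{-1}\|f\|_1$. Adding the contributions from $\Omega^*$, the good part, and the bad part outside~$\Omega^*$, yields \eqref{weak-type-1-1} with a constant $\widehat{K}_1$ depending only on $d$ and~$C_H$.

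The main obstacle is precisely the uniform-in-$L$ H\"ormander smoothness, i.e.\ the triple-gradient estimate on the Dirichlet Green function of the simple random walk on boxes. In infinite volume this is classical (it reduces to decay of derivatives of the Newtonian potential), but in finite volume one must handle the interaction of the singular kernel with the boundary, presumably via a reflection/comparison argument against the full-lattice Green function supplemented by a boundary-regularity bound; this is the content referenced in the outline of Section~\ref{sec-Meyers}.
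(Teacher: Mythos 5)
Your proposal is correct and follows essentially the same route as the paper: a discrete Calder\'on--Zygmund decomposition (the paper uses triadic rather than dyadic subdivision, which is immaterial), the $\ell^2$ bound of Lemma~\ref{L2bound-K} for the good part, and the kernel-smoothness/H\"ormander estimate for the bad part, with the latter reduced exactly as you anticipate to a uniform-in-$L$ triple-gradient bound on the finite-volume Green function proved by reflection against the full-lattice Green function (Proposition~\ref{green-decay-box}).
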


Deferring the proof of this lemma to the next subsection, we now show how this enters into the proof of Theorem~\ref{Lpbound-K}.

\begin{proof}[Proof of Theorem~\ref{Lpbound-K} from Lemma~\ref{weak-1-1-bound-K}]
We follow the proof in Stein~\cite[Theorem~5, page~21]{Stein}. We begin with the case $1<p<2$. Let $f\in\ell^p(\Lambda_L)$ and pick $\alpha>0$. Let $f_1:=f\1_{\{\vert f\vert>\alpha\}}$ and $f_2:=f\1_{\{\vert f\vert\leq\alpha\}}$. Then 
\begin{multline}
%\label{}
\qquad
\bigl\vert \{z\in\Lambda_L\colon \vert \mathcal{K}_{\Lambda_L}f(z)\vert>\alpha\}\bigr\vert\leq \bigl\vert \{z\in\Lambda_L\colon \vert \mathcal{K}_{\Lambda_L}f_1\vert>\alpha\}\bigr\vert\\
+\bigl\vert \{z\in\Lambda_L\colon \vert \mathcal{K}_{\Lambda_L}f_2\vert>\alpha\}\bigr\vert. 
\qquad
\end{multline}
Lemmas \ref{L2bound-K} and \ref{weak-1-1-bound-K} then yield
\begin{equation}
%\label{}
\bigl\vert \{z\in\Lambda_L\colon \vert \mathcal{K}_{\Lambda_L}f(z)\vert>\alpha\}\bigr\vert\leq \widehat K_1\frac{\Vert f_1\Vert_1}{\alpha} +\widehat K_2\frac{\Vert f_2\Vert_2^2}{\alpha^2},
\end{equation}
with $\widehat K_1$ and $\widehat K_2$ independent of $L$. Multiplying by $\alpha^{p-1}$ and integrating, we infer
\begin{equation}
\begin{aligned}
\Vert \mathcal{K}_{\Lambda_L}f\Vert_p^p&=p\int_0^\infty\alpha^{p-1}\bigl\vert \{z\in\Lambda_L\colon \vert \mathcal{K}_{\Lambda_L}f(z)\vert>\alpha\}\bigr\vert\,\textd \alpha\\
&\leq p\sum_z\int_0^\infty \Bigl(\widehat K_1\alpha^{p-2}\vert f(z) \vert\1_{\{\vert f\vert>\alpha\}} +\widehat K_2\alpha^{p-3}\vert f(z) \vert^2\1_{\{\vert f\vert\leq\alpha\}}\Bigr)\,\textd \alpha\\
&=p\widehat K_1\sum_z\vert f(z) \vert\int_0^{\vert f(z) \vert} \alpha^{p-2}\,\textd \alpha+p\widehat K_2\sum_z\vert f(z) \vert^2\int_{\vert f(z) \vert}^\infty \alpha^{p-3}\,\textd \alpha\\
&=\frac{p\widehat K_1}{p-1}\sum_z\vert f(z) \vert^p+\frac{p\widehat K_2}{2-p}\sum_z\vert f(z) \vert^p,
\end{aligned}
\end{equation}
proving the assertion in the case $1<p<2$. 

For $p\in(2,\infty)$, the facts that $\KK_{\Lambda}$ is obviously symmetric and that the norm admits the representation 
\begin{equation}
\Vert\KK_{\Lambda}\Vert_p =\sup_{\Vert f \Vert_p=1}\sup_{\Vert g \Vert_q=1}\frac 1 {|\Lambda|}\sum_{x\in\Lambda}(\KK_\Lambda f)(x)g(x),
\end{equation} for~$q$ equal to the index dual to~$p$, imply that $\Vert\KK_{\Lambda}\Vert_p=\Vert\KK_{\Lambda}\Vert_q$. Hence $\sup_{L\ge1}\Vert\KK_{\Lambda_L}\Vert_p<\infty$ for all $p\in(1,\infty)$.
\end{proof}

\subsection{Weak type-(1,1) estimate}
It remains to prove Lemma \ref{weak-1-1-bound-K}. The strategy is to represent the operator using a singular kernel that has a ``nearly $\ell^1$-integrable'' decay. Let $G_\Lambda(x,y)$ be the Green function (i.e., inverse) of the Laplacian $\Delta$ on $\Lambda$ with zero boundary condition on~$\partial\Lambda$.

\begin{lemma}
The operator $\mathcal{K}_\Lambda$ admits the representation 
\begin{equation}
%\label{}
\hate_i\cdot\bigl[\KK_\Lambda\cdot f(x)\bigr]=\sum_{y\in\Lambda}\sum_{j=1}^d \bigl[\nabla_i^{(1)}\nabla_j^{(2)}G_\Lambda(x,y)\bigr]f_j(y),
\end{equation}
where the superscripts on the $\nabla$'s indicate which of the two variables the operator is acting on.
\end{lemma}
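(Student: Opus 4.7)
The plan is to prove this representation by direct computation, expanding the definition $\KK_\Lambda = \nabla(-\Delta)^{-1}_\Lambda \nabla^\star$ and then performing a discrete integration by parts to move the divergence operator $\nabla^\star$ from the input function $f$ onto the Green function $G_\Lambda$. The two gradients in $\KK_\Lambda$ act on the two different arguments of $G_\Lambda$, which is precisely the content of the claim.

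More concretely, for a vector-valued $f=(f_1,\dots,f_d)$ we first write
\begin{equation}
\hate_i\cdot[\KK_\Lambda\cdot f(x)]=\nabla_i\bigl[(-\Delta)^{-1}_\Lambda(\nabla^\star\cdot f)\bigr](x),
\end{equation}
where $\nabla^\star\cdot f(y):=\sum_{j=1}^d\nabla_j^\star f_j(y)$. The inverse Laplacian with zero boundary data admits the explicit kernel representation $(-\Delta)^{-1}_\Lambda h(x)=\sum_{y\in\Lambda}G_\Lambda(x,y)h(y)$, so plugging $h:=\nabla^\star\cdot f$ in and exchanging the order of summation yields
\begin{equation}
(-\Delta)^{-1}_\Lambda(\nabla^\star\cdot f)(x)=\sum_{j=1}^d\sum_{y\in\Lambda}G_\Lambda(x,y)\,\nabla_j^\star f_j(y).
\end{equation}
Now I will do a discrete integration by parts in the inner sum, shifting $\nabla_j^\star$ from $f_j$ onto $G_\Lambda(x,\cdot)$ via a change of summation variable $y\mapsto y+\hate_j$. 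This produces the discrete partial derivative $\nabla_j^{(2)}G_\Lambda(x,y)$ in the second argument (with the sign bookkeeping absorbed into the inverse of $-\Delta$ versus $\Delta$). Finally, applying the outer $\nabla_i$ in the $x$-variable commutes past the sum over $y$ and turns $G_\Lambda(x,y)$ into $\nabla_i^{(1)}G_\Lambda(x,y)$, giving the claimed formula.

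The only subtle point in the computation is the vanishing of the boundary terms generated by the summation by parts. This is ensured by the Dirichlet boundary condition $G_\Lambda(x,y)=0$ for $y\in\partial\Lambda$ (together with the convention that $f_j$ is extended by zero outside $\Lambda$), which lets me legitimately replace $\sum_{y\in\Lambda}G_\Lambda(x,y)f_j(y-\hate_j)$ by $\sum_{y\in\Lambda}G_\Lambda(x,y+\hate_j)f_j(y)$ without residual contributions from the boundary layer. Once this is verified, the remainder of the argument is a routine manipulation and the lemma follows.
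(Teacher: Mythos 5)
Your proposal is correct and follows essentially the same route as the paper: expand $\KK_\Lambda=\nabla(-\Delta)^{-1}_\Lambda\nabla^\star$ via the kernel of the inverse Dirichlet Laplacian, sum by parts through the index shift $y\mapsto y+\hate_j$ (justified by the vanishing of $G_\Lambda(x,\cdot)$ and $f$ outside $\Lambda$), and let the outer $\nabla_i$ act on the first argument. The sign bookkeeping between $\Delta^{-1}$ and $(-\Delta)^{-1}$ that you flag is indeed the only point requiring care, and you handle it correctly.
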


\begin{proof}
Since both $G_\Lambda$ and $f$ vanish outside $\Lambda$, we have
\begin{equation}
\begin{aligned}
\hate_i\cdot\bigl[\KK_\Lambda\cdot f(x)\bigr]=&\nabla_i\Bigl(\,\sum_{y\in\Lambda}G_\Lambda(\cdot,y)\bigl(\nabla^\star\cdot f\bigr)(y)\Big)(x)\\
=&\sum_{y\in\Z^d}\Bigl(\bigl(G_\Lambda(x+\hate_i,y)-G_\Lambda(x,y)\bigr)\sum_{j=1}^d[f_j(y-\hate_j)-f_j(y)]\Bigr)\\
=&\sum_{j=1}^d\sum_{y\in\Z^d}\bigl(G_\Lambda(x+\hate_i,y+\hate_j)-G_\Lambda(x,y+\hate_j)\bigr)f_j(y)\\
&\qquad\qquad\quad-\sum_{j=1}^d\sum_{y\in\Z^d}\bigl(G_\Lambda(x+\hate_i,y)-G_\Lambda(x,y)\bigr)f_j(y).
\end{aligned}
\end{equation}
This is exactly the claimed expression.
\end{proof}

Crucial for the proof of the weak-type (1,1)-estimate in Lemma~\ref{weak-1-1-bound-K} is an integrable decay estimate on the gradient of the kernel of the operator $\KK_\Lambda$: 

\begin{proposition}
\label{green-decay-box}
There exists $C>0$ independent of $L$ such that
\begin{equation}
\label{E:4.35}
\bigl\vert\nabla_i^{(2)}\nabla_j^{(1)}\nabla_k^{(2)}G_{\Lambda_L}(x,y)\bigr\vert\leq  \frac{C}{\vert x-y\vert^{d+1}}
\end{equation}
for all $x,y\in\Lambda_L$ and $i,j,k\in\{1,\ldots,d\}$. 
\end{proposition}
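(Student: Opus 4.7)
The plan is to reduce the finite-volume estimate to a triple-gradient bound on the infinite-volume Green function $G_{\Z^d}$ by a discrete method of images, and then to establish this free-space bound by a heat-kernel (or Fourier) argument.

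First I would record the image-sum representation. Because $\Lambda_L=[0,L)^d\cap\Z^d$ is a Cartesian product of intervals with boundary at $-1$ and $L$ in each coordinate, uniqueness of the discrete Dirichlet problem yields
\begin{equation}
G_{\Lambda_L}(x,y)=\sum_{\sigma\in\{0,1\}^d}\sum_{n\in\Z^d}(-1)^{|\sigma|}G_{\Z^d}\bigl(x,\,r_\sigma(y)+2(L+1)n\bigr),
\end{equation}
where $r_\sigma(y)_\ell:=y_\ell$ if $\sigma_\ell=0$ and $r_\sigma(y)_\ell:=-2-y_\ell$ if $\sigma_\ell=1$, and $|\sigma|:=\sum_\ell\sigma_\ell$ counts reflected coordinates. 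The verification is standard: antisymmetry across each boundary hyperplane shows the right-hand side vanishes on $\partial\Lambda_L$, and the remaining terms are discrete-harmonic off $y$ with a unit source at $y$. In $d\le2$, where $G_{\Z^d}$ does not exist as a function, one substitutes the potential kernel and relies on the cancellation of divergent constants in the alternating sum; three gradients are insensitive to such constants in any case.

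The technical core is the free-space bound
\begin{equation}
\bigl|\nabla_i^{(2)}\nabla_j^{(1)}\nabla_k^{(2)}G_{\Z^d}(x,y)\bigr|\le\frac{C}{(1+|x-y|)^{d+1}}.
\end{equation}
I would derive this from the heat-kernel representation $G_{\Z^d}(x,y)=\int_0^\infty p_t(x,y)\,\textd t$ (with a regularization in low dimension) combined with standard discrete Gaussian bounds $|\nabla^k p_t(x,y)|\le Ct^{-d/2-k/2}\exp(-c|x-y|^2/(1+t))$: the substitution $s=|x-y|^2/t$ turns the $t^{-d/2-3/2}$ factor into $|x-y|^{-(d+1)}$ times a finite Gamma integral. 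A Fourier alternative is to estimate
\begin{equation}
\int_{[-\pi,\pi]^d}\frac{(e^{ik_i}-1)(e^{ik_j}-1)(e^{ik_k}-1)}{2\sum_\ell(1-\cos k_\ell)}\,e^{ik\cdot(x-y)}\,\frac{\textd k}{(2\pi)^d}
\end{equation}
by iterated integration by parts in the coordinate where $|x_\ell-y_\ell|$ is largest, exploiting that the symbol is $O(|k|)$ near $k=0$. Plugging this into the image sum yields
\begin{equation}
\bigl|\nabla_i^{(2)}\nabla_j^{(1)}\nabla_k^{(2)} G_{\Lambda_L}(x,y)\bigr|\le\frac{C}{(1+|x-y|)^{d+1}}+CL^{-(d+1)}\sum_{n\in\Z^d}(1+|n|)^{-(d+1)},
\end{equation}
because the $(\sigma=0,n=0)$ image sits at $y$ while every other image lies at $\ell^\infty$-distance at least $cL(1+|n|)$ from any $x\in\Lambda_L$; the $n$-sum converges (its exponent exceeds $d$) and is dominated by $C|x-y|^{-(d+1)}$ since $|x-y|\le\sqrt d\,L$.

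The hardest step will be the free-space triple-gradient estimate itself: one needs Gaussian heat-kernel bounds for $p_t$ and its gradients with the sharp $t$-exponents, valid uniformly down to scales where the continuum Gaussian approximation to the discrete walk is weakest, and conjoined with the correct decay of the symbol at the origin so that the singular Fourier integral actually yields exponent $d+1$ and not $d$. These bounds are classical but technical; a possible route around them is to rescale $\Lambda_L$ to the unit box and compare with the continuum Dirichlet Green function of $-\Delta$ on $[0,1]^d$, invoking classical Schauder regularity, at the price of having to control discretization errors near $\partial\Lambda_L$ by random-walk hitting arguments — not obviously a simpler task.
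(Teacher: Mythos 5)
Your proposal takes essentially the same route as the paper's proof: a method-of-images (reflection-principle) representation of $G_{\Lambda_L}$ in terms of the full-lattice Green function, a free-space triple-gradient bound of order $|x-y|^{-(d+1)}$ (which the paper also only sketches, deferring to Lawler's Fourier argument), and a summation over the images; the only substantive difference is that the paper regularizes with a killing rate $\varepsilon>0$ and passes to the $\varepsilon\downarrow0$ limit of the gradients, rather than invoking the potential kernel in $d\le 2$. One intermediate claim in your image summation is wrong as stated: the images other than $(\sigma,n)=(0,0)$ need \emph{not} lie at $\ell^\infty$-distance of order $L(1+|n|)$ from $x$ --- if $y$ is within $O(1)$ of a face of $\Lambda_L$, its reflection in that face (and likewise a single reflection in the opposite face) lies within $O(1)$ of $y$, hence possibly within $O(1)$ of $x$ --- so the asserted bound $CL^{-(d+1)}$ for those terms fails. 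The repair is exactly what the paper does: for the finitely many ``near'' images (at most one reflection per coordinate, $3^d$ of them in the paper's indexing) one uses that reflection in a bounding hyperplane cannot decrease the distance to a point of $\Lambda_L$, so $|x-r(y)|\ge|x-y|$ and each such term is still $O(|x-y|^{-(d+1)})$; the remaining images are at distance at least $cL|n|\ge c\,d^{-1/2}|x-y|\,|n|$, and the resulting sum over $n$ converges since the exponent $d+1$ exceeds $d$. With that correction your argument closes and matches the paper's.
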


Although \eqref{E:4.35} is certainly not unexpected, and perhaps even well-known, we could not find an exact reference and therefore provide an independent proof in Section \ref{sec-Greens-function-decay}. With this estimate in hand, we can now turn to the proof of  Lemma \ref{weak-1-1-bound-K}.

\begin{proof}[Proof of Lemma \ref{weak-1-1-bound-K} from Proposition~\ref{green-decay-box}]
To ease the notation, we will write $\Lambda:=\Lambda_L$ (note that all bounds will be uniform in~$L$) and, resorting to components, write $\KK_\Lambda$ for the scalar-to-scalar operator with kernel $\KK_\Lambda^{(i,j)}(x,y):=\nabla_i^{(1)}\nabla_j^{(2)}G_\Lambda(x,y)$ for some fixed $i,j\in\{1,\ldots,d\}$. For the most part, we adapt the arguments in Stein~\cite[pages~30-33]{Stein}.

Given a function $f\colon\Lambda\to\R$, regard it as extended by zero outside $\Lambda$. Pick $\alpha>0$ and consider a partition of~$\Z^d$ into cubes of side $3^r$, where $r$ is chosen so large that $3^{-rd}\Vert f\Vert_1\leq\alpha$. Naturally, each cube in the partition further divides into~$3^d$ equal-sized sub-cubes of side~$3^{r-1}$, which subdivide further into sub-cubes of side $3^{r-2}$, etc. We will now designate these to be either \emph{good cubes} or \emph{bad cubes} according to the following recipe. All cubes of side $3^r$ are \emph{ex definitio} good. With $Q$ being one of these sub-cubes of side $3^{r-1}$, we call $Q$ good if
\begin{equation}
\label{E:4.36}
\frac 1{\vert Q\vert}\sum_{z\in Q}\bigl|f(z)\bigr|\leq\alpha,
\end{equation}
and bad otherwise. For each good cube, we repeat the process of partitioning it into $3^d$ equal-size sub-cubes and designating each of them to be either good or bad depending on whether \eqref{E:4.36} holds or not, respectively. The bad cubes are not subdivided further.

Iterating this process, we obtain a finite set $\BB$ of bad cubes which covers the (bounded) region $B:=\bigcup_{Q\in\BB} Q$. We define $G:=\Z^d\setminus B$, the good region, and note that 
\begin{equation}
\label{good-sets}
\bigl| f(z)\bigr|\leq \alpha,\qquad z\in G,
\end{equation}
and 
\begin{equation}\label{bad-sets}
\alpha < \frac 1{\vert Q\vert}\sum_{z\in Q}\bigl|f(z)\bigr|\leq 3^d\alpha,\qquad Q\in\BB,
\end{equation}
where the last inequality is due to the fact that the parent cube of a bad cube is good. Next we define the ``good'' function
\begin{equation}
\label{E:4.39}
g(z):=\begin{cases}
f(z),\qquad &z\in G\\
		\frac 1{\vert Q\vert}\sum_{z\in Q}f(z),\qquad &z\in Q\in\BB.
\end{cases}
\end{equation}
The ``bad'' function, defined by $b:=f-g$, then satisfies
\begin{equation}
\label{E:4.40}
\begin{alignedat}{3}
b(z)&=0,\qquad &\quad&z\in G,\\
\sum_{z\in Q}b(z)&=0,\qquad &&Q\in\BB.
\end{alignedat}
\end{equation}
Since $\mathcal{K}_{\Lambda}f=\mathcal{K}_{\Lambda}g+\mathcal{K}_{\Lambda}b$,  as soon as
\begin{equation}
\label{E:4.41}
\bigl\vert\{z:\vert\mathcal{K}_{\Lambda} g(z)\vert>\ffrac{\alpha}{2}\}\bigr\vert\leq\frac{\widehat K_1\Vert f\Vert_1}{2\alpha}\quad\text{ AND }\quad\bigl\vert\{z:\vert\mathcal{K}_{\Lambda} b(z)\vert>\ffrac{\alpha}{2}\}\bigr\vert\leq\frac{\widehat K_1\Vert f\Vert_1}{2\alpha},
\end{equation}
the desired bound \eqref{weak-type-1-1} will hold. We will now show these bounds in separate arguments.

Considering $g$ first, we note that $\Vert g\Vert_2^2$ is bounded by a constant times $\alpha\Vert f\Vert_1$. Indeed, for $z\in B$ let $Q_z$ denote the bad cube containing $z$. Then
\begin{equation}
\begin{aligned}
\sum_{z\in \Z^d}g(z)^2&=\sum_{z\in G} f(z)^2+\sum_{z\in B}g(z)^2\\
&\leq\alpha\sum_{z\in G}\bigl|f(z)\bigr|+\sum_{z\in B}\Big(\frac 1{\vert Q_z\vert}\sum_{y\in Q_z}f(z)\Big)^2\\
&\leq\alpha\Vert f\Vert_1+3^d\alpha\sum_{z\in B}\frac 1{\vert Q_z\vert}\sum_{y\in Q_z}\bigl|f(z)\bigr|\\
&\leq (3^d+1)\alpha\Vert f\Vert_1
\end{aligned}
\end{equation}
by using \eqref{good-sets} on $G$ and \eqref{bad-sets} on $B$. By Chebychev's inequality and Lemma \ref{L2bound-K},
\begin{equation}\label{wt11-1}
\big\vert\{z:\vert\mathcal{K}_{\Lambda} g(z)\vert>\alpha\}\big\vert\leq\frac{\Vert \mathcal{K}_\Lambda g\Vert_2^2}{\alpha^2}\leq\frac{(3^d+1)\Vert\mathcal{K}_\Lambda\Vert_2^2\,\Vert f\Vert_1}\alpha.
\end{equation}
Note that this yields an estimate that is uniform in $\Lambda:=\Lambda_L$ because $\Vert\mathcal{K}_\Lambda\Vert_2\le1$ by Lemma~\ref{L2bound-K}.

Let us turn to the estimate in \eqref{E:4.41} concerning~$b$. Let $\{Q_k\colon k=1,\ldots,|\BB|\}$ be an enumeration of the bad cubes and let $b_k:=b\1_{Q_k}$ be the restriction of~$b$ onto~$Q_k$. Abusing the notation to the point where we write $\KK_\Lambda(x,y)$ for the kernel governing $\KK_\Lambda$, from \eqref{E:4.40} we then have
\begin{equation}
\label{E:4.44}
\KK_\Lambda b_k(z)=\sum_{y\in Q_k}\bigl[\KK_\Lambda(z,y)-\KK_\Lambda(z,y_k)\bigr]b(y),
\end{equation}
where $y_k$ is the center of~$Q_k$ (remember that all cubes are odd-sized).
Let $\tilde Q_k$ denote the cube centered at $y_k$ but of three-times the size --- i.e., $\tilde Q_k$ is the union of $Q_k$ with the adjacent $3^d-1$ cubes of the same side. The bound now proceeds depending on whether $z\in\tilde Q_k$ or not.

For $z\not\in\tilde Q_k$, the distance between $z$ and any~$y\in Q_k$ is proportional to the distance between~$z$ and~$y_k$. Proposition~\ref{green-decay-box} thus implies
\begin{equation}
%\label{}
\bigl|\KK_\Lambda(z,y)-\KK_\Lambda(z,y_k)\bigr|\le C\,\frac{\text{diam}(Q_k)}{\,\vert z-y_k\vert^{d+1}},
\qquad z\not\in\tilde Q_k.
\end{equation}
Moreover, thanks to \eqref{E:4.39}, 
\begin{equation}
%\label{}
\sum_{y\in Q_k}\vert b(y)\vert\leq\sum_{y\in Q_k}\bigl(\vert f(y)\vert+\vert g(y)\vert\bigr)\leq 2\sum_{y\in Q_k}\vert f(y)\vert.
\end{equation}
Using these in \eqref{E:4.44} yields
\begin{equation}
\vert\mathcal{K}_\Lambda b_k(z)\vert\leq C\,\frac{\text{diam}(Q_k)}{\,\vert z-y_k\vert^{d+1}}\sum_{y\in Q_k}\vert f(y)\vert.
\end{equation}
Summing over all $z\not\in\tilde Q_k$ and taking into account that $\vert z-y_k\vert\geq \text{diam}(Q_k)$ for $z\in\tilde Q_k$, we conclude
\begin{equation}
\begin{aligned}
\sum_{z\in\Lambda\setminus\tilde Q_k}\vert\mathcal{K}_\Lambda b_k(z)\vert&\leq C\,\text{diam}(Q_k)\sum_{y\in Q_k}\vert f(y)\vert\sum_{z\colon\vert z-y_k\vert\geq\text{diam}(Q_k)}\frac {1}{\vert z-y_k\vert^{d+1}}\\
&\leq \tilde C\sum_{y\in Q_k}\vert f(y)\vert
\end{aligned}
\end{equation}
for some constant $\tilde C$. Setting $\tilde B:=\bigcup_k\tilde Q_k$ and summing over $k$, we obtain
\begin{equation}
\sum_{z\in\Lambda\setminus\tilde B}\vert\mathcal{K}_\Lambda b(z)\vert\leq\tilde C\sum_{y\in B}\vert f(y)\vert\leq\tilde C \Vert f\Vert_1,
\end{equation}
which by an application of Markov's inequality yields
\begin{equation}\label{wt11-2}
\bigl\vert\{z\in\Lambda\setminus\tilde B\colon \vert\mathcal{K}_\Lambda b(z)\vert\geq\alpha\}\bigr\vert \leq \frac{\tilde C\Vert f\Vert_1}{\alpha}.
\end{equation}
i.e., a bound of the desired form.

To finish the proof, we still need to take care of $z\in\tilde B$. Here we get (and this is the only step where we are forced to settle on \emph{weak}-type estimates),
\begin{equation}
\label{E:4.49}
\begin{aligned}
\bigl\vert\{z\in\tilde B\colon \vert\mathcal{K}_\Lambda b(z)\vert\geq\alpha\}\bigr\vert&\leq\vert\tilde B\vert\leq 3^d\sum_k\vert Q_k\vert\\
&\leq 3^d\sum_k\frac 1\alpha\sum_{z\in Q_k}\bigl\vert f(z)\bigr\vert\leq \frac{3^d\Vert f\Vert_1}\alpha.
\end{aligned}
\end{equation}
The bound \eqref{weak-type-1-1} then follows by combining \eqref{wt11-1}, \eqref{wt11-2} and \eqref{E:4.49}.
\end{proof}

\subsection{Triple gradient of finite-volume Green's function}
\label{sec-Greens-function-decay}\noindent
In order to finish the proof of Theorem~\ref{Lpbound-K}, we still need to establish the decay estimate in Proposition~\ref{green-decay-box}. This will be done by invoking a corresponding bound in the full lattice and reducing it onto a box by reflection arguments. (This is the sole reason why we restrict to rectangular boxes; more general domains require considerably more sophisticated methods.) 

For~$\varepsilon>0$, let $G^\varepsilon$ denote the Green function associated with the discrete Laplacian $\Delta$ on $\Z^d$ with killing rate $\varepsilon>0$, i.e., $G^\varepsilon(\cdot,\cdot)$ is the kernel of the bounded operator $(\varepsilon-\Delta)^{-1}$ on $\ell^2(\Z^d)$. This function admits the probabilistic representation
\begin{equation}
%\label{}
G^\varepsilon(x,y)=\sum_{k=0}^\infty \frac{P^x\bigl(X_k=y\bigr)}{(1+\varepsilon)^{k+1}},
\end{equation}
where~$X$ is the simple random walk and~$P^x$ is the law of~$X$ started at~$x$. This function depends only on the difference of its arguments, so we will interchangeably write $G^\varepsilon(x,y)=G^\varepsilon(x-y)$. We now claim:

\begin{lemma}
\label{green-eps-decay-lattice}
There exists $\widehat C>0$ such that, for all $\varepsilon>0$, all $i,j,k\in\{1,\ldots d\}$ and all $x\ne 0$,
 \begin{equation}
\bigl\vert\nabla_i\nabla_j\nabla_kG^\varepsilon(x)\bigr\vert\leq \frac{\widehat C}{\,\,\vert x\vert^{d+1}}.
\end{equation} 
\end{lemma}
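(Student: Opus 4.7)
My plan is to prove the bound by Fourier analysis on the torus. Since $G^\varepsilon$ is the kernel of $(\varepsilon-\Delta)^{-1}$ on $\ell^2(\Z^d)$, the standard representation gives
\begin{equation*}
\nabla_i\nabla_j\nabla_k G^\varepsilon(x) = \int_{[-\pi,\pi]^d}\widehat F(k)\,e^{i k\cdot x}\,\frac{\textd k}{(2\pi)^d},
\end{equation*}
where $\widehat F(k):=(e^{i k_i}-1)(e^{i k_j}-1)(e^{i k_k}-1)/(\varepsilon+D(k))$ and $D(k):=2\sum_\ell(1-\cos k_\ell)$, since each $\nabla_\ell$ acts as multiplication by $e^{i k_\ell}-1$ in Fourier space. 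The elementary estimates $|e^{i k_\ell}-1|\le|k_\ell|$ and $D(k)\ge c|k|^2$ on $[-\pi,\pi]^d$ yield $|\widehat F(k)|\le C|k|$ uniformly in $\varepsilon\ge 0$. More generally, the Leibniz and Fa\`a di Bruno rules --- crucially using that $\varepsilon$ only enlarges the denominator --- give the symbol-type bound
\begin{equation*}
\bigl|\partial_k^\alpha\widehat F(k)\bigr|\le C_\alpha\,|k|^{1-|\alpha|}, \qquad k\in[-\pi,\pi]^d\setminus\{0\},
\end{equation*}
for every multi-index $\alpha$, again uniformly in $\varepsilon\ge 0$.

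From here the decay estimate follows by a standard dyadic splitting of the Fourier integral. Fix $x\ne 0$ (so $|x|\ge 1$ on the lattice), pick $m$ with $|x_m|=\Vert x\Vert_\infty\ge|x|/\sqrt d$, and let $\chi\colon\R^d\to[0,1]$ be smooth with $\chi\equiv 1$ on $\{|k|\le 1\}$ and $\chi\equiv 0$ on $\{|k|\ge 2\}$. Setting $R:=1/|x|\le 1$, I would decompose $\widehat F = \chi(\cdot/R)\widehat F+(1-\chi(\cdot/R))\widehat F$. The low-frequency piece is bounded directly from $|\widehat F|\le C|k|$ on the support $\{|k|\le 2R\}\subset[-\pi,\pi]^d$, giving $C\int_{|k|\le 2R}|k|\,\textd k\le C'|x|^{-d-1}$. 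The high-frequency piece is handled by $N:=d+2$ integrations by parts in $k_m$; the boundary terms vanish by periodicity, and the symbol bound together with $|k|\ge R$ on the support yields
\begin{equation*}
\biggl|\int(1-\chi(k/R))\widehat F(k)\,e^{i k\cdot x}\,\textd k\biggr|\le\frac{C}{|x_m|^N}\int_{R\le|k|\le\pi\sqrt d}|k|^{1-N}\,\textd k\le\frac{C''}{|x|^{d+1}},
\end{equation*}
where the inner integral is of order $R^{d+1-N}$ because $N-1-d=1>0$. Adding the two pieces gives the stated bound with a constant independent of $\varepsilon$ and of the indices $i,j,k$.

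The main obstacle I anticipate is the careful combinatorial bookkeeping needed to prove the symbol bound $|\partial_k^\alpha\widehat F(k)|\le C_\alpha|k|^{1-|\alpha|}$ uniformly in $\varepsilon\ge 0$; everything else is mechanical cutoff-and-integration-by-parts. A possible alternative that avoids Fourier analysis altogether is the heat-kernel representation $G^\varepsilon(x)=\int_0^\infty e^{-\varepsilon t}p_t(x)\,\textd t$ combined with the classical Gaussian estimates $|\nabla_i\nabla_j\nabla_k p_t(x)|\le C t^{-(d+3)/2}e^{-c|x|^2/t}$ on the discrete heat kernel, splitting the $t$-integral at $t\asymp|x|^2$ to extract the same $|x|^{-d-1}$ decay.
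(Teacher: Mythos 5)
Your proposal follows essentially the same route as the paper, which likewise writes $G^\varepsilon$ as a Fourier integral over the torus and controls the gradients under the integral sign (deferring the details to Lawler's Theorem~1.5.5); your symbol bound plus dyadic cutoff and integration by parts is exactly the standard way those details are filled in, and the computation checks out uniformly in $\varepsilon>0$. No gaps.
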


\begin{proof}[Sketch of proof]
This is a mere extension (by adding one more gradient) of the estimates from in Lawler~\cite[Theorem~1.5.5]{Lawler}. (Strictly speaking, this theorem is only for the transient dimensions but, thanks to $\varepsilon>0$, the same proofs would apply here.) The main idea is to use translation invariance of the simple random walk to write $G^\epsilon(x)$ as a Fourier integral and then control the gradients thereof under the integral sign.  We leave the details as an exercise to the reader.
\end{proof}

We now state and prove a stronger form of Proposition~\ref{green-decay-box}.

\begin{lemma}
\label{green-eps-decay-box}
There exists $C>0$ such that, for all $L>1$, $\varepsilon>0$ and arbitrary $i,j,k\in\{1,\ldots d\}$, 
\begin{equation}
\vert\nabla_i^{(2)}\nabla_j^{(1)}\nabla_k^{(2)}G^\varepsilon_{\Lambda_L}(x,y)\vert\leq  \frac{C}{\vert x-y\vert^{d+1}}
\end{equation}
for all $x,y\in\Lambda$ and all $i,j,k\in\{1,\ldots,d\}$. 
Here, the superscripts on the operators indicate the variable the operator is acting on.
\end{lemma}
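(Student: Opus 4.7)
My plan is the discrete method of images, in agreement with the strategic hint given just above the lemma. For the rectangular box $\Lambda_L$, let $R$ denote the group acting on $\Z^d$ generated by the $2d$ reflections across the boundary hyperplanes of $\Lambda_L$: in the $i$-th coordinate, the two generators are $t\mapsto -2-t$ (fixing the wall at $-1$) and $t\mapsto 2L-t$ (fixing the wall at $L$). Then $R$ is a direct product of $d$ infinite dihedral groups, and each $\sigma\in R$ carries a natural sign $\mathrm{sgn}(\sigma):=(-1)^{|\sigma|}$. The antisymmetrization
\begin{equation}
G^\varepsilon_{\Lambda_L}(x,y) \;=\; \sum_{\sigma\in R}\mathrm{sgn}(\sigma)\,G^\varepsilon(x,\sigma y),\qquad x,y\in\Lambda_L,
\end{equation}
converges absolutely (by exponential decay of $G^\varepsilon$ for $\varepsilon>0$), satisfies $(\varepsilon-\Delta)_x G^\varepsilon_{\Lambda_L}(\cdot,y)=\delta_y$ in $\Lambda_L$ since only $\sigma=\mathrm{id}$ places an image inside $\Lambda_L$, and vanishes for $x$ on any boundary face $F$ by pairing $\sigma$ with $\sigma_F\sigma$ (noting $\sigma_F x=x$ and using translation invariance of $G^\varepsilon$). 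Uniqueness of the Dirichlet problem gives the identification.

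Applying the three gradients termwise (each gradient acting on $\sigma y$ picks up at most a sign, since $\sigma$ maps coordinate unit vectors to $\pm$ coordinate unit vectors) and invoking Lemma~\ref{green-eps-decay-lattice} on each summand yields
\begin{equation}
\bigl|\nabla_i^{(2)}\nabla_j^{(1)}\nabla_k^{(2)} G^\varepsilon_{\Lambda_L}(x,y)\bigr|
\;\le\;\widehat C\sum_{\sigma\in R}\frac{1}{|x-\sigma y|^{d+1}}.
\end{equation}
The key geometric ingredient to control this sum is the following coordinate-wise identity: if $\sigma\in R$ acts by a pure reflection (no translational component) on the subset $S\subseteq\{1,\dots,d\}$ of coordinates and trivially on the rest, then
\begin{equation}
|x-\sigma y|^2 \;=\; |x-y|^2 + 4\sum_{i\in S} d_{F_i}(x)\,d_{F_i}(y) \;\ge\; |x-y|^2,
\end{equation}
where $F_i$ is the relevant wall in coordinate $i$ and $d_{F_i}$ denotes unsigned distance to it. There are exactly $2^d-1$ such nontrivial pure-reflection images, each contributing at most $|x-y|^{-(d+1)}$. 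Every other $\sigma\in R$ involves at least one translation by $2(L+1)$ in some coordinate, which forces $|x-\sigma y|\ge L+1$; since the orbit $R\cdot y$ has density $(2(L+1))^{-d}$ in $\R^d$, the far-image tail is controlled by
\begin{equation}
\sum_{\sigma\colon|x-\sigma y|\ge L+1}\frac{1}{|x-\sigma y|^{d+1}}
\;\lesssim\; L^{-d}\int_{L}^\infty r^{-2}\,dr
\;\lesssim\; L^{-(d+1)}
\;\le\; \frac{C}{|x-y|^{d+1}},
\end{equation}
where the last bound uses $|x-y|\le\sqrt d\, L$.

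The main obstacle is the reflection identity and the orbit bookkeeping in the regime when $x$ and $y$ both lie close to a common wall: then the nearest nontrivial pure-reflection image sits only within a constant factor of $|x-y|$ from $x$, and it is precisely the identity above that prevents it from being closer than $|x-y|$. Once that case is handled and the orbit is decomposed into the $2^d-1$ pure-reflection images plus the translational tail, the bound $C/|x-y|^{d+1}$ follows with $C$ independent of both $L$ and $\varepsilon$, which is exactly what the lemma requires.
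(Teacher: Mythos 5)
Your proposal is correct and follows essentially the same method-of-images strategy as the paper: represent $G^\varepsilon_{\Lambda_L}$ as a signed sum of translates/reflections of $G^\varepsilon$, apply Lemma~\ref{green-eps-decay-lattice} termwise, and split the image sum into the near (pure-reflection) images, controlled by the fact that reflection across a wall separating neither point can only increase distance, and the far (translated) images, controlled by the $L$-spacing of the orbit. The only differences are cosmetic --- you justify the image representation by antisymmetrizing over the reflection group and invoking uniqueness of the Dirichlet problem, while the paper derives it via the path-reflection principle coordinate by coordinate --- plus one harmless miscount: the number of nontrivial near images is $3^d-1$, not $2^d-1$, since each coordinate offers a choice of two walls, which only affects the constant.
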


\begin{proof}
Throughout, we fix $L\in\N$ and denote $\Lambda:=\Lambda_L$.
The proof is based on the Reflection Principle for the simple random walk on~$\Z^d$. Let $X^{(i)}$ denote the $i$-th component of $X$ and let 
\begin{equation}
\tau^i_0:=\inf\{k\ge0\colon X^{(i)}_k=0\}\quad\text{and}\quad 
\tau^i_L:=\inf\{k\ge0\colon X^{(i)}_k=L\}.
\end{equation} 
For $y\in\Lambda_i$ with components $y=(y_1,\ldots,y_d)$, and integer-valued indices $n\in\Z$, put
\begin{equation}
\begin{aligned}
r^i_{2n}(y)&:=(y_1,\ldots,2nL+y_i,\ldots,y_d)\\
r^i_{2n+1}(y)&:=(y_1,\ldots,2(n+1)L-y_i,\ldots,y_d).
\end{aligned}
\end{equation}
Our first claim is that, for $i\in\{1,\ldots,d\}$,
\begin{equation}
\label{eqn:reflection-prob-coordinate}
P^x\bigl(X_k=y,\tau^i_0> k,\tau_L^i>k\bigr)=\sum_{n\in\Z}(-1)^n P^x\bigl(X_k=r^i_n(y)\bigr).
\end{equation}
First we note that, the restriction on the length of~$X$ makes this effectively a finite sum so we only have to exhibit appropriate cancellations due to the alternating sign. Let $A_m^k$, for $k,m\in\N$ and $n\in\Z$, denote the set of paths of length~$k$ starting at~$x$, ending in~$\{r^i_n(y)\colon n\in\Z\}$ and visiting $\{x\in\Z^d\colon x_i=L\Z\}$ exactly~$m$ times. Let $s(p):=0$ if the path~$p\in A_m^k$ ends in $\{r^i_{2n}(y)\colon n\in\Z\}$ and $s(p):=1$ otherwise. The cancellation will arise from the fact that, 
\begin{equation}
\label{eqn:reflection-alternate}
\sum_{ p \in A^k_m } (-1)^{ s(p) } = 0,\qquad m>0.
\end{equation}
To see this consider a map of $A^k_m$, $m>0$, onto itself by taking a path and reflecting the part after the last visit to $\{x\in\Z^d\colon x_i=L\Z\}$. This is a bijection from $A^k_m$ onto itself which changes the sign of $(-1)^{s(p)}$. Since $A_m^k$ is finite, the sum must vanish.

As all paths in $A^k_m$ have the same probability, we are permitted to multiply \eqref{eqn:reflection-alternate} by the probability of each respective path and get for all $m>0$ that
\begin{equation}
\label{eqn:reflection-alternate2}
\begin{aligned}
0=\sum_{ p \in A^k_m } (-1)^{ s(p) } P^x(X_{[k]}=p)
&=\sum_{ p \in A^k_m,\,n\in\Z } (-1)^{ s(p) } P^x\bigl(X_{[k]}=p,X_k=r^i_n(y)\bigr)\\
&=\sum_{ p \in A^k_m,\,n\in\Z } (-1)^n P^x\bigl(X_{[k]}=p,X_k=r^i_n(y)\bigr)\\
&=\sum_{n\in\Z } (-1)^n P^x\bigl(X_{[k]}\in A^k_m,X_k=r^i_n(y)\bigr),
\end{aligned}
\end{equation}
where $X_{[k]}$ denotes the path of the random walk up to time $k$. We now verify \eqref{eqn:reflection-prob-coordinate} by
\begin{equation}
\begin{aligned}
P^x\bigl(X_k=y,\tau^i_0> k,\tau_L^i>k\bigr)&=
P^x\bigl(X_{[k]}\in A^k_0\bigr)\\
&=\sum_{n\in\Z } (-1)^n P^x\bigl(X_{[k]}\in A^k_0,X_k=r^i_n(y)\bigr)\\
&=\sum_{\,n\in\Z }\,\sum_{m\in\N} (-1)^n P^x\bigl(X_{[k]}\in A^k_m,X_k=r^i_n(y)\bigr)\\
&=\sum_{n\in\Z } (-1)^n P^x\bigl(X_k=r^i_n(y)\bigr),
\end{aligned}
\end{equation}
where \eqref{eqn:reflection-alternate2} was used in the third equality. (There are no convergence issues as all sums remain effectively finite because~$k$ is fixed.)
This obviously holds regardless of any restriction of the other components of the walk, which means that we have in particular
\begin{multline}
%\label{}
\qquad
\label{eqn:reflection-prob-coordinate2}
P^x\bigl(X_k=y,\,\tau^j_0>k,\tau^j_L>k\,\,\forall j>i\bigr)
\\=\sum_{n\in\Z}(-1)^n P^x\bigl(X_k=r^{i+1}_n(y),\,\tau^j_0>k,\tau^j_L>k\,\,\forall j>i+1\bigr)\qquad
\end{multline}
for each $i\in\{0,\ldots,d-1\}$. 

We are now ready to establish the desired representation for the Green function. The argument proceeds by induction on dimension. Abusing our earlier notation, denote
\begin{equation}
\begin{aligned}
\Lambda_0&:=\Lambda_L=\{0,\ldots,L\}^d,\\
\Lambda_{i}&:=\Z^i\times\{0,\ldots,L\}^{d-i},\qquad i=1,\ldots,d-1,\\
\Lambda_d&:=\Z^d,
\end{aligned}
\end{equation}
For any $i\in\{0,\ldots,d\}$, the Green function~$G^\varepsilon_{\Lambda_i}$ on $\Lambda_i$ with zero boundary condition is given by
\begin{equation}
G^\varepsilon_{\Lambda_i}(x,y)=\sum_{k=0}^\infty (1+\epsilon)^{-k-1}P^x\bigl(X_k=y,\,\tau^j_0>k,\tau^j_L>k\,\,\forall j> i\bigr).
\end{equation}
Applying \eqref{eqn:reflection-prob-coordinate2} to every probability term, we obtain for each $i\in\{0,\ldots,d-1\}$
\begin{equation}\label{eqn:reflection-green-coordinate2}
G^\varepsilon_{\Lambda_i}(x,y)=\sum_{n\in\Z}(-1)^n G^\varepsilon_{\Lambda_{i+1}}(x,r^{i+1}_n(y)).
\end{equation}
Consecutive applications of this equality yield
\begin{equation}\label{eqn:reflection-green-coordinate2}
G^\varepsilon_{\Lambda}(x,y)=\sum_{z\in\Z^d}(-1)^{z_1+\ldots+z_d} G^\varepsilon_{\Z^d}(x,r_z(y))
\end{equation}
for all $x,y\in\Lambda$, where we abbreviate $r_z=r^1_{z_1}\circ\cdots\circ r^d_{z_d}$. From Lemma \ref{green-eps-decay-lattice}, we thus obtain
\begin{equation}
\begin{aligned}
\bigl\vert\nabla_i^{(2)}\nabla_j^{(1)}\nabla_k^{(2)}G_\Lambda^\varepsilon(x,y)\bigr\vert
\leq\sum_{z\in\Z^d}\bigl\vert\nabla_i^\star\nabla_j\nabla_k^\star G^\varepsilon(x-r_z(y))\bigr\vert
\leq\sum_{z\in\Z^d}\frac{\widehat C}{\,\vert\, x-r_z(y)\vert^{d+1}}
\end{aligned}
\end{equation}
for all $x,y\in\Lambda$. Let $x,y\in\Lambda$ and abbreviate $z_\text{max}=\max_{i=1}^d\vert z_i\vert$. Whenever $z_\text{max}\leq 1$, we have $\vert\, x-r_z(y)\vert\geq\vert\, x-y\vert$ as reflection always increases the distance between points in $\Lambda$. If $z_\text{max}>1$, we may even estimate $\vert\, x-r_z(y)\vert\geq d^{-1/2}L\vert\, z\vert\geq d^{-1}\vert\, x-y\vert\vert\, z\vert.$ The latter is verified quickly using $d^{1/2}z_\text{max}\geq\vert z\vert\geq z_\text{max}$ and the fact that $z_\text{max}$ is at least $2$ in this case. Therefore, we obtain
\begin{equation}
\begin{aligned}
\bigl\vert\nabla_i^{(2)}\nabla_j^{(1)}\nabla_k^{(2)}G_\Lambda^\varepsilon(x,y)\bigr\vert
&\leq\sum_{z\,\colon z_\text{max}\leq 1}\frac{\widehat C}{\,\vert\, x-y\vert^{d+1}}+\sum_{z\,\colon z_\text{max}>1}\frac{d^{d+1}\widehat C}{\,\vert\, x-y\vert^{d+1}\vert\, z\vert^{d+1}}\\
&\leq\frac{\widehat C}{\,\vert\, x-y\vert^{d+1}}\Big(3^d+d^{d+1}\sum_{z\neq 0}\frac 1{\,\vert\, z\vert^{d+1}}\Big),
\end{aligned}
\end{equation}
which is the desired estimate.
\end{proof}

We are now ready to complete the proof of Theorem~\ref{Lpbound-K}:

\begin{proof}[Proof of Proposition~\ref{green-decay-box}]
Although the $\varepsilon\downarrow0$ limit of $G^\epsilon$ exists only in $d\ge3$, for gradients we have $\nabla G(x,y)=\lim_{\varepsilon\downarrow0}\nabla G^\epsilon(x,y)$ in all $d\ge1$. Since the bound in Lemma~\ref{green-eps-decay-box} holds uniformly in~$\varepsilon>0$, we get the claim in all $d\ge1$.
\end{proof}

%%%%%%%%%%%%%%%%%%%%%%%%%%%%%
\section{Perturbed harmonic coordinate}
\label{sec-perturbed-coordinate}
%%%%%%%%%%%%%%%%%%%%%%%%%%%%%
\noindent
In this section we will prove Propositions~\ref{prop-perturbed-corrector} and~\ref{prop-2.8}. 
Abandoning our earlier notation, let
\begin{equation}
%\label{}
G_\Lambda(x,y;\omega)=(-\LL_\omega)^{-1}(x,y)
\end{equation}
denote the Green function in $\Lambda$ with Dirichlet boundary condition for conductance configuration~$\omega$. (Thus, the simple-random walk Green function from Section~\ref{sec-Meyers} corresponds to $\omega:=1$.) The Green function is the fundamental solution to the Poisson equation, i.e.,
\begin{equation}
\label{E:5.2}
\begin{cases}
-\LL_\omega G_\Lambda(x,z,\omega)=\delta_x(z)\qquad&\text{if }z\in\Lambda,
\\*[1mm]
\,\,\,G_\Lambda(x,z,\omega)=0,\qquad&\text{if }z\in\partial\Lambda,
\end{cases}
\end{equation}
where $\delta_x(z)$ is the Kronecker delta. 
Note that $G_\Lambda$ is defined for all $\omega\in\Omega$. The solution to \eqref{E:5.2} is naturally symmetric,
\begin{equation}
\label{E:5.3}
G_\Lambda(x,y;\omega)=G_\Lambda(y,x;\omega),\qquad x,y\in\Lambda,
\end{equation}
and so we can extend it to a function on $\Lambda\cup\partial\Lambda$ by setting $G_\Lambda(x,\cdot;\omega)=0$ whenever $x\in\partial\Lambda$.
Here is a generalized form of the representation~\eqref{E:2.21} (we thank a careful referee who pointed out that this result has appeared in a very similar form in \cite{Gloria-Otto}, Lemma 2.4):

\begin{lemma}[Rank-one perturbation]
\label{lemma-5.1}
For a finite $\Lambda\subset\Z^d$ let $x,y\in\Lambda$ be nearest neighbors. For any $\omega,\omega'$ such that $\omega'_b=\omega_b$ except at $b:=\langle x,y\rangle$, and any $z\in\Lambda\cup\partial\Lambda$,
\begin{multline}
%\label{}
\qquad
\Psi_\Lambda(\omega',z)-\Psi_\Lambda(\omega,z)
\\=-(\omega_{xy}'-\omega_{xy})\bigl[G_\Lambda(z,y;\omega')-G_\Lambda(z,x;\omega')\bigl]\bigl[\Psi_\Lambda(\omega,y)-\Psi_\Lambda(\omega,x)\bigr].
\quad
\end{multline}
\end{lemma}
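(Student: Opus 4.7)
\begin{proofsect}{Proof proposal for Lemma~\ref{lemma-5.1}}
The plan is to treat the change $\omega\mapsto\omega'$ as a rank-one perturbation of the operator $\LL_\omega$, apply it to the known harmonic function $\Psi_\Lambda(\omega,\cdot)$, and then invert using the Green function $G_\Lambda(\cdot,\cdot;\omega')$ (rather than that of~$\omega$) so that the unknown perturbation $\Psi_\Lambda(\omega',\cdot)$ ends up on the right side of the identity.

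First I would introduce $\delta:=\omega_{xy}'-\omega_{xy}$ and record the pointwise identity
\begin{equation}
\bigl((\LL_{\omega'}-\LL_\omega)f\bigr)(u)
=\delta\,\bigl[f(y)-f(x)\bigr]\bigl[\Bbbone_{\{u=x\}}-\Bbbone_{\{u=y\}}\bigr],
\qquad u\in\Z^d,
\end{equation}
which follows directly from the definition \eqref{E:2.8a} since $\omega$ and $\omega'$ agree away from $b=\langle x,y\rangle$. This is the rank-one structure of the perturbation: the new operator differs from the old only through a pairing with the edge gradient across~$b$.

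Next, set $h(z):=\Psi_\Lambda(\omega',z)-\Psi_\Lambda(\omega,z)$ and note that $h\equiv 0$ on~$\partial\Lambda$, since both harmonic coordinates take the boundary value $z\mapsto z$ there. On~$\Lambda$, I compute
\begin{equation}
\LL_{\omega'}h
=\LL_{\omega'}\Psi_\Lambda(\omega',\cdot)-\LL_{\omega'}\Psi_\Lambda(\omega,\cdot)
=-\bigl(\LL_{\omega'}-\LL_\omega\bigr)\Psi_\Lambda(\omega,\cdot),
\end{equation}
using that $\LL_{\omega'}\Psi_\Lambda(\omega',\cdot)=0$ on~$\Lambda$ and $\LL_\omega\Psi_\Lambda(\omega,\cdot)=0$ on~$\Lambda$. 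Plugging in the rank-one formula above with $f:=\Psi_\Lambda(\omega,\cdot)$ gives, for $u\in\Lambda$,
\begin{equation}
-\LL_{\omega'}h(u)=\delta\,\bigl[\Psi_\Lambda(\omega,y)-\Psi_\Lambda(\omega,x)\bigr]\bigl[\Bbbone_{\{u=x\}}-\Bbbone_{\{u=y\}}\bigr].
\end{equation}
Finally, the Dirichlet problem \eqref{E:5.2} for~$\LL_{\omega'}$ with zero boundary condition is solved component-wise by the Green function $G_\Lambda(\cdot,\cdot;\omega')$, so summing $G_\Lambda(z,u;\omega')$ against the right-hand side (a two-point source at~$x$ and~$y$) yields
\begin{equation}
h(z)=\delta\,\bigl[\Psi_\Lambda(\omega,y)-\Psi_\Lambda(\omega,x)\bigr]\bigl[G_\Lambda(z,x;\omega')-G_\Lambda(z,y;\omega')\bigr],
\end{equation}
which is the asserted identity. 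The boundary case $z\in\partial\Lambda$ is consistent since $h(z)=0$ there and the Green function vanishes on~$\partial\Lambda$ by the extension convention stated above the lemma.

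No step looks genuinely obstructive: the only thing to be careful about is bookkeeping of signs and the correct choice of Green function (that of~$\omega'$, not of~$\omega$), which is forced on us by the requirement that $\LL_{\omega'}$ act on~$h$ in order to make the known harmonicity of $\Psi_\Lambda(\omega',\cdot)$ cancel.
\end{proofsect}
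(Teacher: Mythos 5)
Your proof is correct and is essentially the paper's argument: both rest on the rank-one identity for $\LL_{\omega'}-\LL_\omega$, the harmonicity of the two coordinates, and inversion via the Green function of $\omega'$ with uniqueness of the Dirichlet problem. The only difference is organizational --- you solve the Poisson equation for the difference $h$ directly, while the paper writes down the candidate $\Phi_\Lambda$ and verifies that it is $\LL_{\omega'}$-harmonic with the right boundary values.
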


\begin{proofsect}{Proof}
Suppose $\omega,\omega'\in\Omega$ are such that $\omega'$ equals $\omega$ except at the edge $b:=\langle x,y\rangle$, where $\omega_b':=\omega_b+\epsilon$. Define the function $\Phi_\Lambda\colon\Lambda\cup\partial\Lambda\to\R^d$ by
\begin{equation}
%\label{}
\Phi_\Lambda(z):=\Psi_\Lambda(\omega,z)-\epsilon\bigl[G_\Lambda(z,y;\omega')-G_\Lambda(z,x;\omega')\bigl]\bigl[\Psi_\Lambda(\omega,y)-\Psi_\Lambda(\omega,x)\bigr].
\end{equation}
We claim that
\begin{equation}
\label{E:5.6}
\LL_{\omega'}\Phi_\Lambda=0\qquad\text{in }\Lambda.
\end{equation}
Since for $z\in\partial\Lambda$ we have $\Phi_\Lambda(z)=\Psi_\Lambda(\omega,z)=z$, this will imply $\Phi_\Lambda(\cdot)=\Psi_\Lambda(\omega',\cdot)$ thanks to the uniqueness of the solution of the Dirichlet problem.

In order to show \eqref{E:5.6}, we first use \twoeqref{E:5.2}{E:5.3} to get
\begin{equation}
%\label{}
\LL_{\omega'}\Phi_\Lambda(z)=\LL_{\omega'}\Psi_\Lambda(\omega,z)-\epsilon\bigl[\delta_y(z)-\delta_x(z)\bigr]\bigl[\Psi_\Lambda(\omega,y)-\Psi_\Lambda(\omega,x)\bigr].
\end{equation}
To deal with the term $\LL_{\omega'}\Psi_\Lambda(\omega,z)$, we think of of $\LL_{\omega'}$ as a matrix of dimension $|\Lambda|$. For its coefficients $\LL_{\omega}(z,z'):=\langle\delta_z,\LL_\omega\delta_{z'}\rangle_{\ell^2(\Lambda)}$ we obtain
\begin{equation}
\label{E:5.8rr}
\LL_{\omega'}(z,z')=\LL_\omega(z,z')+\epsilon\bigl[\delta_y(z)-\delta_x(z)\bigr]\bigl[\delta_y(z')-\delta_x(z')\bigr].
\end{equation}
Using that $\LL_\omega\Psi_\Lambda(\omega,z)=0$ for $z\in\Lambda$, we now readily confirm \eqref{E:5.6}.
\end{proofsect}

\begin{proofsect}{Proof of Proposition~\ref{prop-perturbed-corrector}}
Set $y:=x+\hate_i$ and denote $\nabla_i f(z):=f(z+\hate_i)-f(z)$. Lemma~\ref{lemma-5.1} shows
\begin{equation}
\label{E:5.8d}
\nabla_i\Psi_\Lambda(\omega',x)
=\Bigl[1-(\omega_b'-\omega_b)\nabla_i^{(1)}\nabla_i^{(2)}G_\Lambda(x,x,\omega')\Bigr]\nabla_i\Psi_\Lambda(\omega,x),
\end{equation}
where the superindices on $\nabla$ indicate which variable is the operator acting on. To prove the claim we need to show
\begin{equation}
\label{E:5.9}
\bigl[\nabla_i^{(1)}\nabla_i^{(2)}G_\Lambda(x,x,\omega)\bigr]^{-1}=\inf\bigl\{Q_{\Lambda}(f)\colon f(y)-f(x)=1,\,0\le f\le 1,\,f_{\partial\Lambda}=0\bigr\},
\end{equation}
where the conductances in $Q_\Lambda$ correspond to~$\omega$. For this, let~$f$ be the minimizer of the right-hand side. The method of Largrange multipliers shows
\begin{equation}
\label{E:5.12}
-\LL_\omega f(z)=\alpha\bigl[\delta_y(z)-\delta_x(z)\bigr].
\end{equation}
Thanks to \eqref{E:5.2}, this is solved by
\begin{equation}
%\label{}
f(z)=\alpha\bigl[G_\Lambda(y,z;\omega)-G_\Lambda(x,z;\omega)\bigr]=\alpha\nabla_i^{(1)}G_\Lambda(x,z;\omega)
\end{equation}
which in light of the constraint $f(y)-f(x)=1$ gives $\alpha=[\nabla_i^{(1)}\nabla_i^{(2)}G_\Lambda(x,x,\omega)]^{-1}$.
Since also $Q_\Lambda(f)=\langle f,-\LL_\omega f\rangle_{\ell^2(\Lambda)}$, \eqref{E:5.12} gives $Q_\Lambda(f)=\alpha$ and so \eqref{E:5.9} holds. The correspondence \eqref{E:2.21} then follows from \twoeqref{E:5.8d}{E:5.9}.

It remains to prove the equalities in \eqref{E:2.24b}. The first equality follows form \eqref{E:5.8d} by exchanging the roles of~$\omega$ and~$\omega'$. For the second equality, we note that \eqref{E:5.8rr} relates the said factor to the ratio of double gradients of the Green function at~$\omega$ and~$\omega'$ which we abbreviate as~$\gg^{(i)}_\Lambda(\omega,x)$. 
\end{proofsect}

%\section{}

Finally, it remains to establish the limit \eqref{E:2.22}, including all of its stated properties:

\begin{proofsect}{Proof of Proposition~\ref{prop-2.8}}
Thanks to ellipticity  restriction \eqref{E:1.3}, we have a bound on this quantity in terms of the lattice Laplacian. Keeping in mind the representation in \eqref{E:5.9}, we have in fact that, for some $c=c(\lambda)\in(0,1)$ related to the double gradient of the Green function for all conductances equal to one,
\begin{equation}
%\label{}
c<\nabla_i^{(1)}\nabla_i^{(2)}G_\Lambda(x,x,\omega')<1/c
\end{equation}
uniformly in $\Lambda$. Moreover, again by \eqref{E:5.9}, $\Lambda\mapsto\nabla_i^{(1)}\nabla_i^{(2)}G_\Lambda(x,x,\omega')$ is non-decreasing in $\Lambda$ and so the limit exists. The formula \eqref{E:2.25a} and the claimed stationarity then follow.
\end{proofsect}

%%%%%%%%%%%%%%%%%%%%%%%%%%%%%
\noindent

\section*{Acknowledgments}
\noindent
This research was partially supported by the NSF grant DMS-1106850, NSA grant H98230-11-1-0171 and the GA\v CR project P201/11/1558. Two of the authors (M.S.\ and T.W.) gratefully acknowledge financial support from the ESF Grant ``Random Geometry of Large Interacting Systems and Statistical Physics'' (RGLIS) that made their stay at UCLA, and the work presented in this note, possible. The authors also wish to thank three anonymous referees for their valuable comments and suggestions on various aspects of this work.

\renewcommand{\MR}[1]{}
\newcommand{\webref}[1]{\texttt{#1}}

\end{document}